
\documentclass[12pt]{article}
\usepackage{eurosym}
\usepackage{amsfonts}
\usepackage{graphicx}
\usepackage{amsmath}

\setcounter{MaxMatrixCols}{10}

\oddsidemargin=0in
\evensidemargin=0in
\topmargin=-55pt
\textwidth=6.5in
\textheight=9in
\numberwithin{equation}{section}

\newtheorem{theorem}{Theorem}[section]

\newtheorem{corollary}[theorem]{Corollary}

\newtheorem{lemma}[theorem]{Lemma}

\newtheorem{proposition}[theorem]{Proposition}

\newenvironment{proof}[1][Proof]{\textbf{#1.} }{\ \rule{0.5em}{0.5em}}
\input{tcilatex}
\begin{document}

\title{Quasi-sure convergence theorem in $p$-variation distance for
stochastic differential equations}
\author{H. Boedihardjo\thanks{%
Mathematical Institute, University of Oxford, Oxford OX1 3LB, England.%
\newline
Email: Horatio.Boedihardjo@maths.ox.ac.uk } \ and Z. Qian\thanks{%
Mathematical Institute, University of Oxford, Oxford OX1 3LB, England. Email: qianz@maths.ox.ac.uk.}}
\maketitle

\begin{abstract}
In this paper by calculating carefully the capacities (defined by high order
Sobolev norms on the Wiener space) for some functions of Brownian motion, we
show that the dyadic approximations of the sample paths of the Brownian
motion converge in the $p$-variation distance to the Brownian motion except
for a slim set (i.e. except for a zero subset with respect to the capacity
on the Wiener space of any order). This presents a way for studying
quasi-sure properties of Wiener functionals by means of the rough path
analysis.
\end{abstract}

\section{Introduction}

It has been suggested that the recent theory of rough paths, put forward in
T. Lyons \cite{MR1654527}, and developed further over the past years by T.
Lyons with his coauthors, and other authors, see \cite{MR2036784}, should
simplify and strengthen the results in quasi-sure analysis over the Wiener
space, which was initiated by P. Malliavin (see \cite{MR1450093} for
example). In fact, the rough path analysis has direct applications in
solving stochastic differential equations quasi-surely (see below for a
precise meaning). What however is missing in literature is an approximation
theorem towards Brownian motion sample paths by simple random curves in $p$%
-variation distance and in quasi-sure sense, i.e. except for a slim subset,
instead of a probability zero set, see below for a definition of slim
subsets in Wiener space. The main goal of the paper is to establish such a
theorem (see Theorem \ref{th5} and Theorem \ref{th6} below). The results
obtained in the paper allow to study quasi-sure properties for important
Wiener functionals -- solutions of It\^{o}'s stochastic differential
equations. The main step in our proof is the construction of quasi-surely
defined \textit{geometric} rough paths associated with Brownian motion,
which we believe has other applications, though not discussed in the present
paper. It is known that there is an equivalence of capacity zero sets
(defined in terms of Dirichlet norms of the Ornstein-Uhlenbeck process on
the Wiener space) and the polar sets defined by the Brownian sheets.
Therefore, with the quasi-surely defined geometric Brownian motion paths we
constructed, it is possible to define the rough path analysis for the
Brownian sheet, thus provides another possible route to study stochastic
partial differential equations via the rough path analysis, this potential
application however is not pursed further here. There is increasing interest
in applying rough path analysis to the study of stochastic partial
differential equations, such as the recent papers \cite{MR2165581}, \cite%
{MR2510132}, \cite{MR2765508}, \cite{MR2566910}, \cite{MR2255351}, \cite%
{hairer1} and the references therein.

In order to address the question we investigate in this perspective, we
begin with some elements in the analysis of rough paths, and establish the
notions and notations which will be used throughout the paper.

\subsection{Concept of rough paths}

Let $(\mathbb{R}^{d})^{\otimes k}=\mathbb{R}^{d}\otimes \cdots \otimes 
\mathbb{R}^{d}$ be the tensor product of $k$-folds of the Euclidean space $%
\mathbb{R}^{d}$; which may be identified with $\mathbb{R}^{kd}$; equipped
with the corresponding Euclidean norm. For a positive integer $n$, $T^{(n)}(%
\mathbb{R}^{d})$ denotes the truncated tensor algebra 
\begin{equation*}
T^{(n)}(\mathbb{R}^{d})=\sum_{k=0}^{n}\oplus (\mathbb{R}^{d})^{\otimes k}%
\text{ ,}
\end{equation*}%
where $(\mathbb{R}^{d})^{\otimes 0}=\mathbb{R}^{1}$.

Given $T>0$, $\Delta =\left\{ (s,t):0\leq s\leq t\leq T\right\} $. $T>0$
will be fixed but arbitrary, so it will be assumed to be $1$ without lose of
generality.

A \emph{continuous} path $w:[0,T]\rightarrow \mathbb{R}^{d}$ is said to have
finite total variation on $[0,T]$, if 
\begin{equation*}
\sup_{D}\sum_{l}\left\vert w_{t_{l}}-w_{t_{l-1}}\right\vert <\infty
\end{equation*}%
where $\sup_{D}$ takes over all finite partitions of $[0,T]$: 
\begin{equation*}
D=\{0=t_{0}<\cdots <t_{m}=T\}\text{.}
\end{equation*}%
This convention applies to similar situations below without further
qualification.

Let $\Omega ^{\infty }(\mathbb{R}^{d})$ denote the totality of all
continuous paths in $\mathbb{R}^{d}$ with finite total variations on $[0,T]$%
. If $w\in \Omega ^{\infty }(\mathbb{R}^{d})$, the $k$-th iterated path
integral over $[s,t]$ 
\begin{equation}
w_{s,t}^{k}=\dint\limits_{s<t_{1}<\cdots <t_{k}<t}dw_{t_{1}}\otimes \cdots
\otimes dw_{t_{k}}\text{.}  \notag
\end{equation}%
By definition, $w_{s,t}^{1}=w_{t}-w_{s}$ is the increment of the path $w$
over $[s,t]$, and for $k\geq 2$ 
\begin{equation*}
w_{s,t}^{k}=\lim_{m(D)\rightarrow
0}\sum_{l}\sum_{j=1}^{k-1}w_{s,t_{l-1}}^{j}\otimes w_{t_{l-1},t_{l}}^{k-j}
\end{equation*}%
are defined inductively. Collecting all $k$-th iterated integrals (up to
degree $n$) together we define $L_{n}(w):\Delta \rightarrow T^{(n)}(\mathbb{R%
}^{d})$ by 
\begin{equation*}
L_{n}(w)_{s,t}=(1,L_{n}(w)_{s,t}^{1},\cdots ,L_{n}(w)_{s,t}^{n})\text{, \ \ }%
L_{n}(w)_{s,t}^{k}=w_{s,t}^{k}\text{ \ \ \ }\forall (s,t)\in \Delta
\end{equation*}%
$L_{n}(w)^{1}$ is called the first level path of $L_{n}(w)$ which indeed
recovers the original path through $w_{t}=w_{0}+L_{n}(w)_{0,t}^{1}$ up to
the starting point. $L_{n}(w)^{2}$ is called the second level path etc.
Often $L_{n}(w)$ is written as $\boldsymbol{w}$ if no confusion is possible.

$L_{n}(w)$ satisfies an important equation, called Chen's identity 
\begin{equation*}
L_{n}(w)_{s,r}\otimes L_{n}(w)_{r,t}=L_{n}(w)_{s,t}\text{ \ \ \ }\forall
0\leq s<r<t\text{,}
\end{equation*}%
where the tensor product takes place in the truncated tensor algebra $%
T^{(n)}(\mathbb{R}^{d})$. It is indeed the reason that the zeroth term is
taken as $1$ in the definition of $L_{n}(w)$. Chen's identity is nothing but
represents the additivity of iterated integrals over disjoint intervals.

Let $\Omega ^{\infty ,n}(\mathbb{R}^{d})$ denote the totality of all
functions $L_{n}(w)$ where $w$ runs through $\Omega ^{\infty }(\mathbb{R}%
^{d})$: 
\begin{equation*}
\Omega ^{\infty ,n}(\mathbb{R}^{d})=L_{n}\left( \Omega ^{\infty }(\mathbb{R}%
^{d})\right) =\left\{ L_{n}(w):w\in \Omega ^{\infty }(\mathbb{R}%
^{d})\right\} 
\end{equation*}%
which may be naturally identified with the space of all $w\in \Omega
^{\infty }(\mathbb{R}^{d})$ started from $0$ (or any fixed point in $\mathbb{%
R}^{d}$).

Next step is to equip $\Omega ^{\infty ,n}(\mathbb{R}^{d})$ with a metric,
and introduce the concept of geometric rough paths in $\mathbb{R}^{d}$. Let $%
p\geq 1$ be fixed and $[p]$ denote the integer part of $p$, which relates to
the roughness of \emph{sample paths}. The interesting values of $p$ are real
numbers between $2$ and $3$ for the study of Brownian motion in $\mathbb{R}%
^{d}$. The $p$-variation metric, which is the key concept in the analysis of
rough paths, denoted by $d_{p}$, is a metric on $\Omega ^{\infty ,[p]}(%
\mathbb{R}^{d})$ defined by 
\begin{equation*}
d_{p}(L(v),L(w))=\max_{1\leq k\leq \lbrack p]}\sup_{D}\left(
\sum_{l}\left\vert
L(v)_{t_{l-1},t_{l}}^{k}-L(w)_{t_{l-1},t_{l}}^{k}\right\vert ^{\frac{p}{k}%
}\right) ^{\frac{k}{p}}
\end{equation*}%
where $L(w)$ denotes $L_{[p]}(w)$ for simplicity. The completion of $\Omega
^{\infty ,[p]}(\mathbb{R}^{d})$ under $d_{p}$ is denoted by $G\Omega _{p}(%
\mathbb{R}^{d})$. An element in $G\Omega _{p}(\mathbb{R}^{d})$ is called a 
\emph{geometric rough path} in $\mathbb{R}^{d}$ of roughness $p$.

If $\boldsymbol{w}=(1,w^{1},\cdots ,w^{[p]})\in G\Omega _{p}(\mathbb{R}^{d})$%
, then $\boldsymbol{w}$ satisfies Chen's identity $\boldsymbol{w}%
_{s,r}\otimes \boldsymbol{w}_{r,t}=\boldsymbol{w}_{s,t}$ in $T^{[p]}(\mathbb{%
R}^{d})$ (for any $0\leq s<r<t\leq T$), and $\boldsymbol{w}$ has finite $p$%
-variation in the sense that $\sup_{D}\sum_{l}\left\vert
w_{t_{l-1},t_{l}}^{k}\right\vert ^{p/k}<\infty $ \ for all $k\leq \lbrack p]$%
.

T. Lyons \cite{MR1654527} has demonstrated that a theory of integration for
a geometric rough path may be established. Let $\boldsymbol{w}\in G\Omega
_{p}(\mathbb{R}^{d})$ and $f:\mathbb{R}^{d}\rightarrow L(\mathbb{R}^{d},%
\mathbb{R}^{\tilde{d}})$ a function on $\mathbb{R}^{d}$ with values in the
linear space $L(\mathbb{R}^{d},\mathbb{R}^{\tilde{d}})$ of all linear
operators from $\mathbb{R}^{d}$ to $\mathbb{R}^{\tilde{d}}$,\ where $L(%
\mathbb{R}^{d},\mathbb{R}^{\tilde{d}})$ may be identified with $\mathbb{R}%
^{d}\otimes \mathbb{R}^{\tilde{d}}$ or the Euclidean space $\mathbb{R}^{d%
\tilde{d}}$. Such an $f$ is called an $\mathbb{R}^{\tilde{d}}$-valued 1-form
on $\mathbb{R}^{d}$. Let $f^{k}$ denote the $k-1$-th derivative $D^{k-1}f$
of $f$ which is identified with a function on $\mathbb{R}^{d}$ valued in $L((%
\mathbb{R}^{d})^{\otimes k},\mathbb{R}^{\tilde{d}})$ (where $k=1,\cdots ,$).
In particular $f^{1}=f$ .

If $w\in \Omega ^{\infty }(\mathbb{R}^{d})$, then $y=(y_{t})_{t\in \lbrack
0,T]}\in \Omega ^{\infty }(\mathbb{R}^{\tilde{d}})$ where $%
y_{t}=\int_{0}^{t}f(w_{s})dw_{s}$ is the path integral defined via the
Riemann integral 
\begin{equation}
y_{t}=\lim_{m(D)\downarrow 0}\sum_{l}f(w_{t_{l-1}})(w_{t_{l}}-w_{t_{l-1}})%
\text{.}  \label{r-i1}
\end{equation}%
One of the main results in the rough path analysis is the following \emph{%
continuity theorem}.

\begin{theorem}
\label{th1}(T. Lyons \cite{MR1654527}) Suppose that $f\in C_{b}^{n+1}(%
\mathbb{R}^{d};L(\mathbb{R}^{d},\mathbb{R}^{\tilde{d}}))$, where $n\geq 1$
is an integer, and suppose that $p\geq 1$ such that $[p]\leq n$. Then the 
\emph{integration}, which takes $L(w)\in \Omega ^{\infty ,[p]}(\mathbb{R}%
^{d})$ to the lifting $L(y)\in \Omega ^{\infty ,[p]}(\mathbb{R}^{\tilde{d}})$
of  $y$ defined by (\ref{r-i1}), is \emph{continuous} with respect to the $p$%
-variation metrices.\ Moreover, the mapping $L(w)\rightarrow L(y)$ is
uniformly continuous on any bounded set of $\Omega ^{\infty ,[p]}(\mathbb{R}%
^{d})$.
\end{theorem}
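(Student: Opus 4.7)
The plan is to follow the standard rough path strategy: build the lifted integral path on small intervals via a Taylor-type local approximation, verify that it is almost multiplicative of order strictly greater than $1$ in the $p$-variation scale, invoke Lyons' extension principle to promote it to a genuine multiplicative functional, and then deduce uniform continuity on bounded sets from the stability of each step.

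First, for $w\in\Omega^{\infty}(\mathbb{R}^{d})$ and $(s,t)\in\Delta$, I would define a candidate $\hat Y_{s,t}=(1,\hat Y^{1}_{s,t},\ldots,\hat Y^{[p]}_{s,t})$ whose $k$-th level is the sum, over all multi-indices $(j_{1},\ldots,j_{k})$ with $j_{1}+\cdots+j_{k}\leq [p]$, of the natural contraction of $f^{j_{1}}(w_{s})\otimes\cdots\otimes f^{j_{k}}(w_{s})$ against $w^{j_{1}}_{s,t}\otimes\cdots\otimes w^{j_{k}}_{s,t}$. For smooth $w$ this is exactly the Taylor expansion, truncated at order $[p]$, of the iterated integrals of $y=\int f(w)\,dw$ over $[s,t]$; in particular the first level recovers (\ref{r-i1}) in the limit as the mesh tends to zero.

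The technical heart is to show that $\hat Y$ is almost multiplicative: with $\omega(s,t)$ a control dominating $\sum_{k\leq [p]}|w^{k}_{s,t}|^{p/k}$ and $\theta:=([p]+1)/p>1$,
\[
\bigl|(\hat Y_{s,r}\otimes\hat Y_{r,t}-\hat Y_{s,t})^{k}\bigr|\leq C\,\|f\|_{C_{b}^{n+1}}\,\omega(s,t)^{\theta},\qquad 1\leq k\leq [p].
\]
I would obtain this by expanding the tensor product, using Chen's identity $w^{j}_{s,t}=\sum_{i+l=j}w^{i}_{s,r}\otimes w^{l}_{r,t}$ to rewrite the $[s,t]$-increments in terms of the two subintervals, and applying Taylor's theorem to each difference $f^{j}(w_{r})-f^{j}(w_{s})$. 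The hypothesis $f\in C_{b}^{n+1}$ with $n\geq [p]$ enters here: the $([p]+1)$-th derivative bound controls the Taylor remainder and supplies the extra fractional power of $\omega$ that lifts the exponent above~$1$. Once the estimate is in place, Lyons' extension lemma from \cite{MR1654527} produces a unique multiplicative $Y\in G\Omega_{p}(\mathbb{R}^{\tilde d})$ with $|Y^{k}_{s,t}-\hat Y^{k}_{s,t}|\leq C\omega(s,t)^{\theta}$, and for smooth $w$ a direct comparison of the first level with the Riemann sums in (\ref{r-i1}) identifies $Y=L(y)$.

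For the continuity claim, I would take $v,w\in\Omega^{\infty}(\mathbb{R}^{d})$ with $L(v),L(w)$ in a $d_{p}$-bounded set, and decompose
\[
\hat Y(v)^{k}_{s,t}-\hat Y(w)^{k}_{s,t}=A^{k}_{s,t}+B^{k}_{s,t},
\]
where $A$ collects the terms in which each $f^{j_{i}}(v_{s})$ is replaced by $f^{j_{i}}(w_{s})$, controlled via mean-value estimates by $\|v-w\|_{\infty}\leq d_{p}(L(v),L(w))$ together with the uniform bounds on the derivatives of $f$, while $B$ collects the terms in which $v^{j_{i}}_{s,t}$ is replaced by $w^{j_{i}}_{s,t}$ and is controlled directly by $d_{p}(L(v),L(w))$. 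Combined with an almost-multiplicativity estimate for the pair $(\hat Y(v),\hat Y(w))$ obtained by running the same expansion, Lyons' extension lemma applied to the difference transports the bound to $d_{p}(L(\tilde y),L(y))$; this yields uniform continuity on $d_{p}$-bounded subsets of $\Omega^{\infty,[p]}(\mathbb{R}^{d})$, and density extends the result to $G\Omega_{p}(\mathbb{R}^{d})$. I expect the main obstacle to be the almost-multiplicativity estimate itself: one must carefully match every cross-term of $\hat Y_{s,r}\otimes\hat Y_{r,t}$ with a corresponding piece of $\hat Y_{s,t}$ coming from Chen's identity, and verify that every leftover remainder carries at least $[p]+1$ units of $w$-increment so that the collected remainder has total $p$-variation exponent $\theta>1$. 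The analogous quantitative estimate for two paths, needed for uniform continuity on bounded sets, follows by the same bookkeeping applied to each term of $A$ and $B$.
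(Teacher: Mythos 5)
The paper does not prove Theorem \ref{th1}; it is quoted from Lyons \cite{MR1654527}. Your sketch does track the standard strategy of that proof: a Taylor-based local candidate, an almost-multiplicativity estimate of exponent $\theta=([p]+1)/p>1$, Lyons' extension lemma, and a two-path version of the estimate for continuity (and for uniform continuity on bounded sets). The role you assign to the $C_b^{[p]+1}$ hypothesis --- controlling the Taylor remainder of $f$ so as to supply the extra fractional power --- is also correct.

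However, the level-$k$ candidate you write down for $k\geq 2$ is not the right one, and this is a genuine gap rather than a notational slip. You contract $f^{j_1}(w_s)\otimes\cdots\otimes f^{j_k}(w_s)$ against $w^{j_1}_{s,t}\otimes\cdots\otimes w^{j_k}_{s,t}$, but the correct contraction is against the single iterated integral $w^{j_1+\cdots+j_k}_{s,t}$. The paper's own illustration for $p\in(2,3)$ is explicit on this: $\tilde{y}^2_{s,t}=f^1(w_s^1)\otimes f^1(w_s^1)(w^2_{s,t})$, i.e.\ the argument is $w^2_{s,t}$, not $w^1_{s,t}\otimes w^1_{s,t}$. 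These are genuinely different objects: for a smooth path $w^1_{s,t}\otimes w^1_{s,t}=w^2_{s,t}+(w^2_{s,t})^{\mathrm{T}}$, where $\mathrm{T}$ transposes the two tensor slots, and the extra term $(w^2_{s,t})^{\mathrm{T}}$ is of size $\omega(s,t)^{2/p}$. Since $2/p<1<\theta$, this is a leading-order discrepancy, not a remainder. Consequently the assertion ``for smooth $w$ this is exactly the Taylor expansion of the iterated integrals of $y$'' fails at level two and above, the almost-multiplicativity bound cannot close with the required exponent, and even if it somehow did, the extension lemma would produce a multiplicative functional other than $L(y)$, so the identification step breaks. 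Once the candidate is corrected, the remaining bookkeeping you outline --- Chen's identity to match cross terms, Taylor remainders of $f$, and the two-path estimate for bounded sets --- carries the proof through.
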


If $w\in \Omega ^{\infty }(\mathbb{R}^{d})$ and its lifting to a geometric
rough path $\boldsymbol{w}=(1,w^{1},\cdots ,w^{[p]})$, $y_{t}=%
\int_{0}^{t}f(w_{s})dw_{s}$ the usual Riemannian integral defined as above,
then $y\in \Omega ^{\infty }(\mathbb{R}^{\tilde{d}})$ (which is true
actually for a Lipschitz continuous $f$). The lifting $\boldsymbol{y}%
=L_{[p]}(y)$ is denoted by $\int f(\boldsymbol{w})d\boldsymbol{w}$. The
previous theorem says that the \emph{It\^{o}-Lyons integration} $\boldsymbol{%
w}\rightarrow \int f(\boldsymbol{w})d\boldsymbol{w}$ is continuous with
respect to the $p$-variation metrices. Notice that the usual path integral $%
w\rightarrow \int_{0}^{\cdot }f(w_{s})dw_{s}$ is in general not continuous
under the uniform norm of paths.

As a consequence, for $\boldsymbol{w}\in G\Omega _{p}(\mathbb{R}^{d})$ and $%
f\in C_{b}^{[p]+1}(\mathbb{R}^{d};L(\mathbb{R}^{d},\mathbb{R}^{\tilde{d}}))$
we can definite its integral $\int f(\boldsymbol{w})d\boldsymbol{w}$ as a
unique geometric rough path in $G\Omega _{p}(\mathbb{R}^{\tilde{d}})$. It is
however interesting to know how to define $\int f(\boldsymbol{w})d%
\boldsymbol{w}$ directly by means of rough paths.

Let us describe the definition for $p\in (2,3)$ which is the most
interesting case as it is the case for geometric rough paths associated with
Brownian motion.

Since $[p]=2$, so that we need to define two components $y^{1}$, $y^{2}$
which defines a rough path $\int f(\boldsymbol{w})d\boldsymbol{w}\equiv
(1,y^{1},y^{2})$ where $\boldsymbol{w}=(1,w^{1},w^{2})\in G\Omega _{p}(%
\mathbb{R}^{d})$ and $f\in C_{b}^{3}(\mathbb{R}^{d};L(\mathbb{R}^{d},\mathbb{%
R}^{\tilde{d}}))$. To this end first define $\boldsymbol{\tilde{y}}=(1,%
\tilde{y}^{1},\tilde{y}^{2})$ by 
\begin{equation*}
\tilde{y}%
_{s,t}^{1}=f^{1}(w_{s}^{1})(w_{s,t}^{1})+f^{2}(w_{s}^{1})(w_{s,t}^{2})
\end{equation*}%
and 
\begin{equation*}
\tilde{y}_{s,t}^{2}=f^{1}(w_{s}^{1})\otimes f^{1}(w_{s}^{1})(w_{s,t}^{2})%
\text{.}
\end{equation*}%
It is understandable that $\boldsymbol{\tilde{y}}$ is not a geometric rough
path yet (for example, it does not satisfy Chen's identity in general), so
we take a limiting procedure to define $\boldsymbol{y}=(1,y^{1},y^{2})$ by
means of Riemann sums but at the tensor level. More precisely define 
\begin{equation*}
\boldsymbol{y}_{s,t}=\lim_{m(D)\downarrow 0}\boldsymbol{y}%
_{t_{0},t_{1}}\otimes \cdots \otimes \boldsymbol{y}_{t_{m-1},t_{m}}
\end{equation*}%
where the tensor product $\otimes $ takes place in the truncated algebra $%
T^{2}(\mathbb{R}^{\tilde{d}})$, and the limit $\lim_{m(D)\downarrow 0}$
takes over finite partitions of $[s,t]$. We then can show that the above
limit exists and $\boldsymbol{y}=\int f(\boldsymbol{w})d\boldsymbol{w}$.

\subsection{Differential equations driven by rough paths}

The most important result in the rough path analysis is the universal limit
theorem for solutions of differential equations. Let $\mathbb{R}^{d}$ and $%
\mathbb{R}^{\tilde{d}}$ be two Euclidean spaces. Consider a system of
differential equations of the following form 
\begin{equation}
dy_{t}^{j}=\sum_{i=1}^{d}f_{i}^{j}(y_{t})dw_{t}^{i}\text{ \ \ \ }
\label{de-z1}
\end{equation}%
with initial data $y_{0}\in \mathbb{R}^{\tilde{d}}$, $w=(w^{i})$ is a
continuous path in $\mathbb{R}^{d}$, and $f=(f_{i}^{j}):\mathbb{R}^{\tilde{d}%
}\rightarrow L(\mathbb{R}^{d},\mathbb{R}^{\tilde{d}})$ is a function, called
an $\mathbb{R}^{d}$-valued vector field on $\mathbb{R}^{\tilde{d}}$, where
for $y\in \mathbb{R}^{\tilde{d}}$, $\xi \in \mathbb{R}^{d}$, $(f(y)\xi
)^{j}=\sum_{i=1}^{d}f_{i}^{j}(y)\xi ^{i}$. Suppose $f_{i}^{j}$ are Lipschitz
continuous, and $w\in \Omega ^{\infty }(\mathbb{R}^{d})$, then the standard
Picard iteration applying to the integral equation 
\begin{equation*}
y_{t}^{j}=y_{0}^{j}+\sum_{i=1}^{d}\int_{0}^{t}f_{i}^{j}(y_{s})dw_{s}^{i}
\end{equation*}%
allows to determine a unique continuous path $y$ in $\mathbb{R}^{\tilde{d}}$
with total finite variation. This establishes a mapping sending $w\in \Omega
^{\infty }(\mathbb{R}^{d})$ to the solution $y\in \Omega ^{\infty }(\mathbb{R%
}^{\tilde{d}})$, denoted by $y=F(y_{0},w)$. We then lift both paths $w$ and $%
y$ to their corresponding geometric rough paths of the same roughness $p$,
which thus defines a mapping which maps $L_{[p]}(w)$ to $L_{[p]}(y)$ for
each $p\geq 1$. We will again denote it as $F(y_{0},\cdot )$. That is 
\begin{equation*}
F\left( y_{0},L_{n}(w)\right) =L_{n}\left( F(y_{0},w)\right) \text{ \ \ \ \ }%
\forall w\in \Omega ^{\infty ,n}(\mathbb{R}^{d})\text{.}
\end{equation*}

\begin{theorem}
\label{th2}(T. Lyons \cite{MR1654527})Let $p\geq 1$ and $f=(f_{j}^{i})\in
C_{b}^{[p]+1}(\mathbb{R}^{\tilde{d}};L(\mathbb{R}^{d},\mathbb{R}^{\tilde{d}%
}))$. Then 
\begin{equation*}
F(y_{0},\cdot ):L_{[p]}(w)\rightarrow L_{[p]}\left( F(y_{0},w)\right)
\end{equation*}%
is continuous from $\Omega ^{\infty ,[p]}(\mathbb{R}^{d})$ to $\Omega
^{\infty ,[p]}(\mathbb{R}^{\tilde{d}})$ with respect to the corresponding $p$%
-variation distances.
\end{theorem}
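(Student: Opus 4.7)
\textbf{Proof plan for Theorem \ref{th2}.} The strategy is a Picard iteration carried out directly at the level of paths in the $p$-variation topology, with Theorem \ref{th1} serving as the key continuity input. For $w\in\Omega^{\infty}(\mathbb{R}^{d})$ the solution $y=F(y_{0},w)$ exists and is unique in $\Omega^{\infty}(\mathbb{R}^{\tilde{d}})$ by the classical theory, since $f$ is in particular Lipschitz. The non-trivial content is that the lift $L_{[p]}(w)\mapsto L_{[p]}(y)$ is continuous with respect to $d_{p}$, and it is here that Theorem \ref{th1} is applied.

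I would first recast the ODE as a fixed-point problem compatible with the It\^{o}--Lyons integration. Introduce the 1-form $\hat{f}$ on $\mathbb{R}^{d}\oplus \mathbb{R}^{\tilde{d}}$ by $\hat{f}(w,y)(\xi )=(\xi,f(y)\xi)$, so the joint path $z=(w,y)$ satisfies $dz=\hat{f}(z)\,dw$ with $\hat{f}\in C_{b}^{[p]+1}$. Then the map $\Phi_{w}:\boldsymbol{z}\mapsto \int \hat{f}(\boldsymbol{z})\,d\boldsymbol{w}$ (with the first coordinate pinned to $w$ and initial condition $(0,y_{0})$) is continuous in $d_{p}$ by Theorem \ref{th1}, and its fixed point is precisely the lift of the solution.

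The argument then proceeds in three steps. (i) \emph{Local contraction.} On a subinterval $[s,t]\subset \lbrack 0,T]$, prove that $\Phi_{w}$ is $d_{p}$-Lipschitz with constant $\lambda(\Vert L(w)\Vert_{p\text{-var};[s,t]},\Vert f\Vert_{C^{[p]+1}})$ that tends to $0$ as $t-s\downarrow 0$. This relies on unpacking Theorem \ref{th1} to show that the $p$-variation of the rough integral over $[s,t]$ scales with positive powers of $\Vert \boldsymbol{w}\Vert_{p\text{-var};[s,t]}$, so that on short intervals $\lambda<1$. (ii) \emph{Glueing.} Given any $R>0$, choose a partition $0=T_{0}<T_{1}<\cdots <T_{N}=T$, with $N$ depending only on $R$ and $\Vert f\Vert_{C^{[p]+1}}$, such that the contraction of (i) holds on every $[T_{k-1},T_{k}]$ for all $w$ with $\Vert L(w)\Vert_{p\text{-var}}\leq R$. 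Solve on each subinterval and glue the pieces together via Chen's identity; super-additivity of the $p$-variation furnishes a global a priori bound $\Vert L(y)\Vert_{p\text{-var};[0,T]}\leq C(R,\Vert f\Vert_{C^{[p]+1}})$. (iii) \emph{Continuity.} Given two drivers $w,\tilde{w}$ with $\Vert L(w)\Vert_{p\text{-var}},\Vert L(\tilde{w})\Vert_{p\text{-var}}\leq R$, run parallel iterations and invoke the uniform continuity of $\Phi$ on bounded sets (Theorem \ref{th1}) to obtain, on each $[T_{k-1},T_{k}]$, an inequality of the form
\begin{equation*}
d_{p}\bigl(L(y^{(n+1)}),L(\tilde{y}^{(n+1)})\bigr)\leq \lambda\, d_{p}\bigl(L(y^{(n)}),L(\tilde{y}^{(n)})\bigr)+C(R)\,d_{p}\bigl(L(w),L(\tilde{w})\bigr),\qquad \lambda <1.
\end{equation*}
Passing to the limit yields a local Lipschitz bound $d_{p}(L(y),L(\tilde{y}))\leq C(R)\,d_{p}(L(w),L(\tilde{w}))$ on each subinterval, and super-additivity assembles these into the desired global continuity.

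The main obstacle is the \emph{quantitative} version of Theorem \ref{th1}: to run the contraction one needs an explicit Lipschitz bound on $\Phi_{w}$ polynomial in $\Vert \boldsymbol{w}\Vert_{p\text{-var};[s,t]}$ with positive exponent, together with analogous estimates for $\Phi_{w}-\Phi_{\tilde{w}}$, rather than mere abstract continuity. Establishing these sharp bounds requires a careful re-examination of the construction of the rough integral at each tensor level (the first-level term $\tilde{y}^{1}$ and the correction $\tilde{y}^{2}$), tracking how constants depend on $\Vert \boldsymbol{w}\Vert_{p\text{-var};[s,t]}$ and on $\Vert f\Vert_{C^{[p]+1}}$. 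Once this refinement is available, the local contraction, the gluing via Chen's identity, and the final super-additive reassembly are essentially mechanical.
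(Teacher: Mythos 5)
The paper does not prove Theorem \ref{th2}. It is stated with explicit attribution to T. Lyons \cite{MR1654527} (the Universal Limit Theorem) and is used thereafter as a black box, so there is no in-paper argument to compare against; the remarks below therefore measure your proposal against the standard proof in \cite{MR1654527} and \cite{MR2036784}.

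Your sketch reproduces the standard architecture: recast the ODE as a fixed-point problem for a rough-integration map, establish a local contraction on small intervals, glue via Chen's identity, and deduce continuity of the solution map from continuity of rough integration. The augmented $1$-form on $\mathbb{R}^{d}\oplus\mathbb{R}^{\tilde d}$ is precisely the device Lyons employs, and the three-step plan (local contraction, gluing, continuity) mirrors his.

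There is, however, a concrete gap in how you invoke Theorem \ref{th1}. As written, your Picard map $\Phi_{w}:\boldsymbol{z}\mapsto\int\hat f(\boldsymbol z)\,d\boldsymbol w$ evaluates the $1$-form along one rough path $\boldsymbol z$ while integrating against a \emph{different} rough path $\boldsymbol w$. That is not an instance of Theorem \ref{th1}, which concerns only the self-integration $\boldsymbol{w}\mapsto\int f(\boldsymbol w)\,d\boldsymbol w$; an integral $\int g(\boldsymbol z)\,d\boldsymbol w$ with $\boldsymbol z\neq\boldsymbol w$ is not even defined until one has a joint lift of the pair $(w,z)$, which is exactly the object the iteration is supposed to produce. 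The formulation that makes the fixed-point argument close is to take a $1$-form $\tilde f$ on $\mathbb{R}^{d}\oplus\mathbb{R}^{\tilde d}$ with values in $L(\mathbb{R}^{d}\oplus\mathbb{R}^{\tilde d},\mathbb{R}^{d}\oplus\mathbb{R}^{\tilde d})$, namely $\tilde f(w,y)(\xi,\eta)=(\xi,f(y)\xi)$, and to set $\Phi(\boldsymbol z)=\int\tilde f(\boldsymbol z)\,d\boldsymbol z$, constraining the $\mathbb{R}^{d}$-component of the initial iterate to be $\boldsymbol w$; since the $\mathbb{R}^{d}$-component of $\tilde f$ is the coordinate projection, one then checks that this constraint is propagated along the iteration. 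That version of $\Phi$ genuinely falls under Theorem \ref{th1}. You also flag the second real issue yourself: Theorem \ref{th1} as stated gives only qualitative (uniform) continuity on bounded sets, whereas the contraction step requires the $p$-variation of $\int\tilde f(\boldsymbol z)\,d\boldsymbol z$ over a subinterval to be controlled by a positive power of the $p$-variation of $\boldsymbol z$ over that subinterval, with explicit dependence on $\|f\|_{C^{[p]+1}}$. Those quantitative estimates must be extracted from the construction of the rough integral rather than from the black-box statement of the continuity theorem. With those two points addressed the plan would go through and is faithful to Lyons' proof.
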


This theorem ensures that there is a unique continuous extension of $F$ on $%
G\Omega _{p}(\mathbb{R}^{d})$, still denoted by $F(y_{0},\cdot )$, so that $%
F(y_{0},\boldsymbol{w})=L_{[p]}\left( F(y_{0},\pi (\boldsymbol{w}))\right) $
if $\boldsymbol{w}\in G\Omega _{p}(\mathbb{R}^{d})$. The continuous mapping 
\begin{equation*}
\boldsymbol{w}\in G\Omega _{p}(\mathbb{R}^{d})\rightarrow F(y_{0},%
\boldsymbol{w})\in G\Omega _{p}(\mathbb{R}^{\tilde{d}})
\end{equation*}%
is called the\emph{\ It\^{o}-Lyons mapping }defined by the differential
equation (\ref{de-z1}).

\section{Main results}

The analysis of rough paths, developed in T. Lyons \cite{MR1654527}, \cite%
{MR2036784}, can be applied to the study of stochastic differential
equations with driven noises which are far more irregular than those of
sample paths of semimartingales. On the other hand, Lyons' theory also sheds
new insight on It\^{o}'s classical theory of stochastic differential
equations, namely, stochastic differential equations driven by Brownian
motion, as presented in \cite{MR637061} for example.

In order to apply Lyons' rough path theory to It\^{o}'s theory of stochastic
differential equations, it is necessary to enhance Brownian motion sample
paths into geometric rough paths. The first construction of Brownian motion
as rough paths was presented in E.-M. Sipilainen's Ph.D. thesis (1993) at
University of Edinburgh, under the supervision of T. Lyons. B. Hambly and T.
Lyons provide further examples in \cite{MR1924401} and \cite{MR1617044}.
They proved that symmetric diffusions with generators of elliptic
differential operators of second order, and Brownian motion on the
Sierpinski gasket can be enhanced into geometric rough paths of level two.
In \cite{MR1883719}, geometric rough paths of level 3 associated with
fractional Brownian motions with Hurst parameter $h>\frac{1}{4}$ were
constructed by means of dyadic approximations, and in \cite{MR2036784}
further examples of geometric rough paths of level 2 or 3 associated a class
of Gaussian processes and more generally a class of stochastic processes
with long time memory are given. In particular, T. Lyons and Z. Qian \cite%
{MR2036784} showed that if the correlation of a continuous stochastic
process over disjoint time intervals satisfies a polynomial decay condition,
together with further less important technical conditions, then the rough
path analysis may be applied to stochastic differential equations driven by
such a process. We notice that the correlation decay condition is a
generalization of the martingale property which is the key in It\^{o}'s
theory of stochastic calculus. We would like to recommend the reader for
other constructions of rough paths to the books by T. Lyons, M. Caruana, and
T. L\'{e}vy \cite{MR2314753}, P. Friz and N. Victoir \cite{MR2604669}, A.
Lejay's survey \cite{MR2053040}, and other papers listed in the references
in \cite{MR2036784} and \cite{MR2604669}.

In particular, the following theorem has been proved (for a proof see for
example \cite{MR2036784}).

\begin{theorem}
\label{th3}If $B$ is a Brownian motion in $\mathbb{R}^{d}$ on a complete
probability space $(\Omega ,\mathcal{F},\mathbb{P})$, $\boldsymbol{B}%
=(1,B^{1},B^{2})$ where $B_{s,t}^{1}=B_{t}-B_{s}$ and 
\begin{equation}
B_{s,t}^{2}=\int_{s<t_{1}<t_{2}<t}\circ dB_{t_{1}}\otimes \circ dB_{t_{2}}
\label{m29-1}
\end{equation}%
for \thinspace $0\leq s<t$, defined in the sense of Stratonovich's
integration, then, for any $2<p<3$%
\begin{equation*}
\mathbb{P}\left\{ \omega :\boldsymbol{B}(\omega )\in G\Omega _{p}(\mathbb{R}%
^{d})\right\} =1\text{.}
\end{equation*}
\end{theorem}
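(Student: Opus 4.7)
My plan is to build $\boldsymbol{B}$ as a limit, in the $p$-variation metric, of the lifts of smooth (dyadic piecewise linear) approximations of $B$, and then identify the limit with $(1,B^1,B^2)$ where $B^2$ is defined by Stratonovich iterated integrals. Concretely, let $D_m=\{k2^{-m}:0\le k\le 2^m\}$ and let $B(m):[0,1]\to\mathbb{R}^d$ be the polygonal approximation of $B$ on $D_m$; since $B(m)$ has bounded variation, its lift $L_2(B(m))\in\Omega^{\infty,2}(\mathbb{R}^d)$ is well-defined. I would then aim to prove that, almost surely, $\{L_2(B(m))\}_{m\ge 1}$ is Cauchy in $d_p$ for every $p\in(2,3)$, and that $L_2(B(m))^2_{s,t}$ converges in probability to the Stratonovich iterated integral in (\ref{m29-1}). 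The completeness of $(G\Omega_p(\mathbb{R}^d),d_p)$ would then place the almost-sure limit in $G\Omega_p(\mathbb{R}^d)$.

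The core analytic step is moment control of the differences between successive levels. Using Gaussianity and standard Stratonovich stochastic calculus, I would establish
\begin{equation*}
\mathbb{E}\bigl[|B(m+1)^k_{s,t}-B(m)^k_{s,t}|^{2n}\bigr]\le C_{n}\,2^{-m n}\,|t-s|^{nk}\qquad(k=1,2),
\end{equation*}
for every integer $n\ge 1$ and all $(s,t)$ that are endpoints in $D_{m+1}$, the extra factor $2^{-mn}$ reflecting the gain from one dyadic refinement over another. The crucial $k=2$ estimate is obtained by writing $B(m+1)^2_{s,t}-B(m)^2_{s,t}$ as a sum of cross-terms involving independent Brownian bridge increments on disjoint dyadic subintervals, whose $L^{2n}$-norms can be bounded by Burkholder--Davis--Gundy or by direct moment calculations.

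Given these estimates, I would invoke a Besov/Kolmogorov-type embedding --- essentially the $p$-variation version of Garsia--Rodemich--Rumsey applied to the $1/k$-Hölder scaled level-$k$ parts --- to pass from discrete dyadic moment bounds to a bound on the full $p$-variation, of the form
\begin{equation*}
\mathbb{E}\Bigl[d_p(L_2(B(m+1)),L_2(B(m)))^{2n}\Bigr]\le C'_{n}\,2^{-m n \delta}
\end{equation*}
for some $\delta=\delta(p)>0$ when $p>2$. A Borel--Cantelli application then yields summability of $d_p(L_2(B(m+1)),L_2(B(m)))$ a.s., producing a $d_p$-limit $\boldsymbol{\tilde B}\in G\Omega_p(\mathbb{R}^d)$.

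Finally, identification of the limit: the level-one part of $L_2(B(m))$ is $B(m)$ itself, which converges uniformly, hence in $p$-variation, to $B$; so $\boldsymbol{\tilde B}^1=B^1$. For the level-two part, the polygonal $B(m)$ is of bounded variation and the classical Wong--Zakai theorem (which is essentially the content of the Stratonovich correction) gives $L_2(B(m))^2_{s,t}\to B^2_{s,t}$ in probability. Combined with the a.s.\ $d_p$-convergence, this forces $\boldsymbol{\tilde B}=(1,B^1,B^2)=\boldsymbol{B}$, finishing the proof. The main obstacle I anticipate is the level-two moment bound: one must carefully exploit the independence structure of Brownian bridges over dyadic cells and ensure that the gain $2^{-mn}$ is obtained uniformly in $(s,t)$, since weaker bounds would only give $p$-variation control for larger $p$ and not across the whole range $p\in(2,3)$.
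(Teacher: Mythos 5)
The paper does not prove Theorem \ref{th3} itself --- it cites \cite{MR2036784} --- but the route you sketch (dyadic polygonal lifts $L_2(B(m))$, a.s.\ Cauchy in $d_p$, then identification of the limit via Wong--Zakai) is precisely the strategy used there and is also the backbone of the paper's quasi-sure refinement (Theorems \ref{th5}, \ref{th8}). So your high-level plan is on target. However, there are two genuine problems in the way you propose to carry out the middle step.

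First, the moment bound you state is wrong. Take $k=2$ and $s,t$ consecutive dyadics at level $n'\le m$, so $|t-s|=2^{-n'}$. The difference $B(m+1)^2_{s,t}-B(m)^2_{s,t}$ is, up to a constant, $\sum_{r}[\xi^{2r-1}_{m+1},\xi^{2r}_{m+1}]$, a sum of $2^{m-n'}$ independent mean-zero second-chaos terms of $L^2$-size $\sim 2^{-m}$, so its $L^{2n}$-norm is $\sim n\sqrt{2^{-m}\,|t-s|}$ and
\begin{equation*}
\mathbb{E}\bigl[|B(m+1)^2_{s,t}-B(m)^2_{s,t}|^{2n}\bigr]\ \asymp\ C_n\,2^{-mn}\,|t-s|^{n},
\end{equation*}
not $2^{-mn}|t-s|^{2n}$. (Similarly for $k=1$ at the resolved scales the exponent on $|t-s|$ in your bound is one power too high.) This is exactly the content of the paper's Lemma \ref{lem1a}; the correct bounds are genuinely weaker than what you assume.

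Second, and more importantly, even with the corrected moment bound the ``Besov/Kolmogorov/GRR'' step does not go through in the naive form you describe. The difference $B(m+1)^2-B(m)^2$ does not obey a single-exponent H\"older-type moment estimate across all scales: its $L^q$-size is $\sim 2^{-m/2}|t-s|^{1/2}$ for $|t-s|\ge 2^{-m}$ but $\sim 2^{m}|t-s|^{2}$ for $|t-s|\le 2^{-m}$. A Kolmogorov argument run on the large-scale behavior alone gives a H\"older exponent that tends only to $1/2$ for the level-two difference, i.e.\ $p$-variation convergence only for $p>4$, which misses the entire range $p\in(2,3)$. To get the needed gain for $p<3$, one must combine the smallness factor $2^{-m/2}$ with the \emph{a priori} $2\alpha$-H\"older control of each $B(m)^2$ itself. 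The paper (following Hambly--Lyons) packages this into the weighted dyadic functionals $\rho_j$ of (\ref{d-s1}) and Lemma \ref{le1}, which dominate $d_p$ and for which the difference estimate (\ref{m27-7}) gives a decay $\sim 2^{-m(p-2)/4}$. An alternative, essentially equivalent, device is an interpolation inequality of the form $d_p\lesssim d_{p'}^{\,p'/p}\,(\text{$p'$-variation bounds})^{1-p'/p}$ with $p'<2<p$. Your proposal needs one of these mechanisms; as written, ``GRR applied to the H\"older-scaled levels'' is a gap, not a detail.

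With those two corrections --- the right moment bounds and the $\rho_j$ (or interpolation) device --- your plan does deliver Theorem \ref{th3}, and is indeed the argument the cited reference carries out.
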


The goal of the present article is to prove a stronger result that $%
\boldsymbol{B}$ are geometric rough paths quasi-surely. To make it more
precise, we need more notions and notations on Malliavin calculus, the
capacity theory on the Wiener space.

Let $\boldsymbol{W}$ denote the space of all continuous paths in $\mathbb{R}%
^{d}$ started at the origin, endowed with the topology of uniform
convergence over finite time intervals. If we wish to emphasize the
dimension $d$ we will render our notations with superscript $d$. $\mathcal{B}%
(\boldsymbol{W})$ denotes the Borel $\sigma $-algebra, which can be
described in terms of the coordinate process $(B_{t})_{t\geq 0}$ on $%
\boldsymbol{W}$, where for each $t\geq 0$, $B_{t}$ is the coordinate
functional on $\boldsymbol{W}$ at time $t$. That is to say, if $w\in 
\boldsymbol{W}$, then $B_{t}(w)=w(t)$. In what follows, in order to avoid
heavy notations, $B_{t}$ may be written as $w(t)$ for $w\in \boldsymbol{W}$, 
$w_{t}(w)$ or $w(t,w)$ if no confusion may arisen. Similar convention
applies to $w$ as well: $w$ may be considered as a typical path in $\mathbb{R%
}^{d}$ or as the canonical coordinate process on $\boldsymbol{W}$. Let $%
\mathcal{F}_{t}^{0}$ to be the smallest $\sigma $-algebra over $\boldsymbol{W%
}$ such that all $B_{s}$ for all $s\leq t$ are $\mathcal{F}_{t}^{0}$%
-measurable. Then $\mathcal{F}^{0}=\sigma \{B_{t}:t\geq 0\}$ coincides with $%
\mathcal{B}(\boldsymbol{W})$.

The Wiener measure $\mathbb{P}$ is the unique probability on $(\boldsymbol{W}%
,\mathcal{B}(\boldsymbol{W}))$ which makes the coordinate process $%
(B_{t})_{t\geq 0}$ a standard Brownian motion. Alternatively, $\mathbb{P}$
is the Gaussian measure on $(\boldsymbol{W},\mathcal{B}(\boldsymbol{W}))$
with the characteristic function%
\begin{equation*}
\int_{\boldsymbol{W}}e^{i\langle l,\cdot \rangle }d\mathbb{P}=e^{-\frac{1}{2}%
|l|_{\boldsymbol{H}}^{2}}\text{ \ \ \ \ for }l\in \boldsymbol{W}^{\ast }
\end{equation*}%
where $\boldsymbol{W}^{\ast }$ is the dual space of $\boldsymbol{W}$,
identified with a linear subspace of the Cameron-Martin space 
\begin{equation*}
\boldsymbol{H}=\{h\in L^{2}(\mathbb{R}_{+},\mathbb{R}^{d})|h(0)=0\text{ and }%
\dot{h}\in L^{2}(\mathbb{R}_{+},\mathbb{R}^{d})\}
\end{equation*}%
equipped with the Hilbert norm $|h|_{\boldsymbol{H}}=\sqrt{\int_{0}^{\infty
}|\dot{h}(t)|^{2}dt}$. It is clear that any $h\in \boldsymbol{H}$ has a
continuous representation and there is a natural continuous imbedding $%
\boldsymbol{H}\hookrightarrow \boldsymbol{W}$.

If $h\in \boldsymbol{H}$ then $\xi _{h}$ denotes the Wiener functional on $%
\boldsymbol{W}$ defined by It\^{o}'s integration $\xi _{h}=\int_{0}^{\infty }%
\dot{h}.dw$. Then, $\xi _{h}$ has a normal distribution $N(0,|h|_{%
\boldsymbol{H}}^{2})$. The translation $\tau _{h}:w\rightarrow w+h$ is
measurable, and $\mathbb{P}\circ \tau _{h}$ is equivalent to the Wiener
measure $\mathbb{P}$, which allows us to define the Malliavin derivative of $%
\xi _{l}$ in a direction $h$:%
\begin{equation*}
D_{h}\xi _{l}=\left. \frac{d}{d\varepsilon }\right\vert _{\varepsilon
=0}\sum_{i=1}^{d}\int_{0}^{\infty }\dot{l}^{i}d(w^{i}+\varepsilon
h^{i})=\langle l,h\rangle _{\boldsymbol{H}}\text{ ,}
\end{equation*}%
Define $D\xi _{l}$ to be $\boldsymbol{H}$-valued random variable on $%
\boldsymbol{W}$ by requiring that $\langle D\xi _{l},h\rangle _{\boldsymbol{H%
}}=D_{h}\xi _{l}$ \ for all $h\in \boldsymbol{H}$ so that $D\xi _{l}=l$ for
any $l\in \boldsymbol{H}$.

If $f$ is a smooth Schwartz function on $\mathbb{R}^{n}$, and $l_{1},\cdots
,l_{n}\in \boldsymbol{H}$, then $F=f(\xi _{l_{1}},\cdots ,\xi _{l_{n}})$ is
called a \textit{smooth Wiener functional} on $\boldsymbol{W}$. The
collection of all such smooth functionals is denoted by $\mathcal{S}$. By
forcing the chain rule to define the Malliavin derivative by 
\begin{equation*}
DF=Df(\xi _{l_{1}},\cdots ,\xi _{l_{n}})=\sum_{j=1}^{n}\frac{\partial f}{%
\partial x^{j}}(\xi _{l_{1}},\cdots ,\xi _{l_{n}})l_{j}\text{.}
\end{equation*}%
By iterating the definition we may define 
\begin{equation*}
D^{2}F=\sum_{j=1}^{n}\frac{\partial ^{2}f}{\partial x^{i}\partial x^{j}}(\xi
_{l_{1}},\cdots ,\xi _{l_{n}})l_{i}\otimes l_{j}
\end{equation*}%
which is an $\boldsymbol{H}^{\otimes 2}$-valued random variable. We can
define $D^{s}F$ inductively, see \cite{MR637061} for more details.

The Sobolev norms $||F||_{q,N}$ (for $N=0,1,2,\cdots $) is equivalent to $%
\sum_{j\leq N}||D^{j}F||_{p}$, where 
\begin{equation*}
||D^{j}F||_{q}=\sqrt[q]{\mathbb{E}\left\{ \left\vert D^{j}F\right\vert _{%
\boldsymbol{H}^{\otimes j}}^{q}\right\} }\text{.}
\end{equation*}%
The completion of smooth Wiener functionals under the norm $||\cdot ||_{q,N}$
is denoted by $\mathbb{D}_{N}^{q}$. For any $q\geq 1$ and integer $N\geq 0$, 
$(\mathbb{D}_{N}^{q},||\cdot ||_{q,N})$ is a Banach space, called a Sobolev
space over the Wiener space $\boldsymbol{W}$. The $(q,N)$-capacity Cap$%
_{q,N} $ is a function on the collection of subsets of $\boldsymbol{W}$
defined as the following.

If $A$ is an open subset of $\boldsymbol{W}$, then%
\begin{equation*}
\text{\textit{Cap}}_{q,N}(A)=\inf \left\{ ||u||_{q,N}:u\in \mathbb{D}_{N}^{q}%
\text{ s.t. }u\geq 1\text{ }\mathbb{P}\text{-a.e. on }A\text{ and }u\geq 0%
\text{ a.s. on }\boldsymbol{W}\right\}
\end{equation*}%
and for a general subset $A\subset \boldsymbol{W}$%
\begin{equation*}
\text{\textit{Cap}}_{q,N}(A)=\inf \left\{ \text{\textit{Cap}}_{q,N}(B):B%
\text{ open and }B\supseteq A\right\} \text{.}
\end{equation*}

For each pair $(q,N)$, \textit{Cap}$_{q,N}$ is a capacity on $\boldsymbol{W}$%
, in the sense that the following hold.

\begin{enumerate}
\item $0\leq $\textit{Cap}$_{q,N}(A)\leq $\textit{Cap}$_{q,N}(B)$ if $%
A\subseteq B$,

\item \textit{Cap}$_{q,N}$ is sub-additive, that is \textit{Cap}$_{q,N}(\cup
_{i=1}^{\infty }A_{i})\leq \sum_{i=1}^{\infty }$\textit{Cap}$_{q,N}(A_{i})$.
\end{enumerate}

If $A$ is Borel measurable, then $\mathbb{P}(A)\leq $\textit{Cap}$%
_{q,N}(A)^{q}$ for any $q>1$ and $N\geq 0$. Therefore any subset of $%
\boldsymbol{W}$ with $(q,N)$-capacity zero is null set with respect to the
Wiener measure $\mathbb{P}$. However there are many probability zero subsets
which have positive $(q,1)$-capacity. According to P. Malliavin, a subset $%
A\subset \boldsymbol{W}$ is called \textit{slim}, if \textit{Cap}$_{q,N}(A)=0
$ for all $q\geq 1$ and $N\in \mathbb{N}$.

We are interested in the random field $\boldsymbol{B}(w)=(1,w^{1},w^{2})$ on 
$\boldsymbol{W}$, valued in $T^{2}(\mathbb{R}^{d})$ parameterized by $%
\{(s,t):s<t\}$, where 
\begin{eqnarray*}
w_{s,t}^{1} &=&w_{t}-w_{s}\text{,} \\
w_{s,t}^{2} &=&\int_{s<t_{1}<t_{2}<t}\circ dw_{t_{1}}\otimes \circ dw_{t_{1}}
\end{eqnarray*}%
for all $s<t$, where $\circ d$ indicates the Stratonovich integration.

We are now in a position to state an interesting consequence to our main
result Theorem \ref{th5}.

\begin{theorem}
\label{th4}For any $p\in (2,3)$ there is a version of $\boldsymbol{B}$ such
that $\left\{ \boldsymbol{B}\notin G\Omega _{p}(\mathbb{R}^{d})\right\} $ is
slim, that is%
\begin{equation*}
\text{Cap}_{q,N}\left\{ w\in \boldsymbol{W}:\boldsymbol{B}(w)\notin G\Omega
_{p}(\mathbb{R}^{d})\right\} =0
\end{equation*}%
for any $q>1$ and $N\in \mathbb{N}$.
\end{theorem}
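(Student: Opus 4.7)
The plan is to deduce Theorem \ref{th4} from the main quasi-sure approximation result Theorem \ref{th5}. For each $n\geq 1$, let $w^{(n)}$ denote the dyadic piecewise linear approximation of a Brownian sample path $w$ at mesh $2^{-n}$, i.e.\ the polygon interpolating $w$ at the points $k2^{-n}$, $k=0,\ldots,2^{n}$. Each $w^{(n)}$ has finite total variation on $[0,1]$, so its level-two lift $L_{2}(w^{(n)})$ lies in $\Omega^{\infty,2}(\mathbb{R}^{d})$, and Theorem \ref{th5} will supply that $\{L_{2}(w^{(n)})\}_{n\geq 1}$ is quasi-surely Cauchy in the $p$-variation distance $d_{p}$ for every $p\in(2,3)$.

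With that in hand, I would write $\mathcal{N}\subset\boldsymbol{W}$ for the set of paths on which the sequence fails to be $d_{p}$-Cauchy; by Theorem \ref{th5}, \textit{Cap}$_{q,N}(\mathcal{N})=0$ for every $q>1$ and every integer $N\geq 0$. Define the required version $\widetilde{\boldsymbol{B}}$ on $\boldsymbol{W}\setminus\mathcal{N}$ as the $d_{p}$-limit of $L_{2}(w^{(n)})$, and on $\mathcal{N}$ as an arbitrary fixed element of $G\Omega_{p}(\mathbb{R}^{d})$. Since $G\Omega_{p}(\mathbb{R}^{d})$ is by construction the $d_{p}$-completion of $\Omega^{\infty,2}(\mathbb{R}^{d})$, every such Cauchy sequence converges inside $G\Omega_{p}(\mathbb{R}^{d})$, and hence
\begin{equation*}
\{w\in\boldsymbol{W}:\widetilde{\boldsymbol{B}}(w)\notin G\Omega_{p}(\mathbb{R}^{d})\}\subseteq\mathcal{N},
\end{equation*}
which is slim by monotonicity of the capacities.

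Next I would verify that $\widetilde{\boldsymbol{B}}$ is a version of the Stratonovich rough path $\boldsymbol{B}=(1,w^{1},w^{2})$ in the statement. Combining Theorem \ref{th3} with the classical Wong--Zakai-type convergence of polygonal approximations, $L_{2}(w^{(n)})\to\boldsymbol{B}$ in $d_{p}$ holds $\mathbb{P}$-almost surely along the dyadic sequence. Because $\mathbb{P}(A)\leq\textit{Cap}_{q,N}(A)^{q}$, every slim set is $\mathbb{P}$-null, so the quasi-sure limit $\widetilde{\boldsymbol{B}}$ agrees with $\boldsymbol{B}$ $\mathbb{P}$-almost surely, which is exactly what is required for a version.

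The main obstacle sits entirely inside Theorem \ref{th5}: establishing uniform Sobolev estimates of the shape $\|d_{p}(L_{2}(w^{(n)}),L_{2}(w^{(m)}))\|_{q,N}\leq C(q,N)\,\rho(n,m)$ with a summable rate $\rho$, for arbitrary $q>1$ and $N\in\mathbb{N}$. The delicate point will be controlling arbitrarily high Malliavin derivatives of the second-level iterated integrals $w^{(n)2}_{s,t}$: these are quadratic in Gaussian increments, so their $k$th derivatives are $\boldsymbol{H}^{\otimes k}$-valued and demand tensor-norm bounds uniform across dyadic scales. Once such bounds are secured, a capacitary Borel--Cantelli argument---exploiting sub-additivity and monotonicity of \textit{Cap}$_{q,N}$ together with the inequality $\mathbb{P}(|F|>\lambda)\leq\lambda^{-q}\|F\|_{q,N}^{q}$---upgrades the Sobolev decay to quasi-sure convergence, from which Theorem \ref{th4} follows as above.
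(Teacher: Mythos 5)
Your deduction of Theorem~\ref{th4} from Theorem~\ref{th5} is correct and is exactly the paper's route: Theorem~\ref{th5} makes $\sum_m d_p(\boldsymbol{w}^{(m+1)},\boldsymbol{w}^{(m)})<\infty$ quasi-surely, hence the sequence $\boldsymbol{w}^{(m)}$ is quasi-surely $d_p$-Cauchy and, by completeness of $G\Omega_p(\mathbb{R}^d)$, converges quasi-surely to a limit; this limit agrees $\mathbb{P}$-a.s.\ with the Stratonovich lift $\boldsymbol{B}$ (slim sets are $\mathbb{P}$-null, and the almost-sure convergence in $d_p$ of dyadic polygonal lifts to $\boldsymbol{B}$ is classical), so the limit is a version of $\boldsymbol{B}$ that lies in $G\Omega_p(\mathbb{R}^d)$ off a slim set. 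This matches the remark the paper places immediately after Theorem~\ref{th5}.

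One caution on your final paragraph: you characterise the task underlying Theorem~\ref{th5} as establishing uniform Sobolev bounds on $d_p(L_2(w^{(n)}),L_2(w^{(m)}))$ itself. The paper explicitly disclaims this route, stating that the authors are unable to show (and do not believe) that $w\mapsto d_p(\boldsymbol{w}^{(m+1)},\boldsymbol{w}^{(m)})$ is Malliavin differentiable. They instead dominate $d_p$ by the H\"older-type functionals $\rho_1,\rho_2$ of (\ref{d-s1}) via Lemma~\ref{le1}, and even $\rho_j^{p/j}$ is only in $\mathbb{D}_1^q$ (good enough for $N=1$ only). For general $N$ they decompose the capacity over dyadic scales $n$ and replace the powers $|{\cdot}|^{p/j}$ by even integer powers $|{\cdot}|^{2\tilde N}$, obtaining genuine polynomial Wiener functionals whose higher Malliavin derivatives can be estimated. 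So the hard work of Theorem~\ref{th5} lies in that dominating-functional-plus-polynomialisation device, not in differentiating $d_p$ directly.
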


It is well known and indeed it is very easy to show that there is a version
of the stochastic integrals $B^{2}$ (defined by (\ref{m29-1})) so that $%
\boldsymbol{B}$ is well defined on $\boldsymbol{W}$ except for a slim subset
(i.e. a subset with $(q,N)$-capacity zero for all $q$ and all positive
integer $N$), and $\boldsymbol{B}$ is a rough path quasi-surely, that is, $%
\boldsymbol{B}(w)$ has finite $p$-variation and satisfies Chen's equation
for all $w\in \boldsymbol{W}$ except for a slim subset. Since $2<p<3$, such
a version of $\boldsymbol{B}$ allows us to develop a theory of stochastic
differential equations and thus gives quasi-surely defined solutions for all
stochastic differential equations with coefficients which are regular
enough. However, such a theory will not ensure a convergence theorem such as
Theorem \ref{th6}. In fact, we will prove a quasi-sure approximation theorem
for the Brownian motion.

For a given natural number $n$, $k=0,1,\cdots ,2^{n}$, $t_{n}^{k}=k/2^{n}$
are the dyadic points in $[0,1]$. For a continuous path $w\in \boldsymbol{W}$%
, $w^{(n)}$ is the polygonal approximation defined by 
\begin{equation}
w_{t}^{(n)}=w_{t_{n}^{k-1}}+2^{n}(t-t_{n}^{k-1})(w_{t_{n}^{k}}-w_{t_{n}^{k-1}})%
\text{ \ \ for \ \ }t\in \left[ t_{n}^{k-1},t_{n}^{k}\right] \text{\ }
\label{vz-2}
\end{equation}%
for $k=1,\cdots ,2^{n}$. This notation equally applies to the coordinate
process $\{w_{t}:t\in \lbrack 0,1]\}$. 

The idea of approximating Brownian motion by piece-wise smooth sample paths
originated from the fundamental research of P. L\'{e}vy \cite{levy1}, \cite%
{levy2}, also see K. It\^{o} and H. P. McKean \cite{MR0345224} for the
construction of Brownian motion sample paths starting from polygonal paths
with vortices modelled by random walks.

For simplicity, if no confusion is possible, we will write $\boldsymbol{w}%
^{(m)}$ for $L_{2}(w^{(m)})$, the enhanced geometric rough path of level two
which is associated to $w^{(m)}$. \ Our main result may be stated as the
following

\begin{theorem}
\label{th5}For any $p\in (2,3)$ 
\begin{equation}
\text{\textit{Cap}}_{q,N}\left\{ w\in \boldsymbol{W}:\sum_{m=1}^{\infty
}d_{p}(\boldsymbol{w}^{(m)},\boldsymbol{w}^{(m+1)})=\infty \right\} =0
\label{m28-4}
\end{equation}%
for any $q\geq 1$ and $N\in \mathbb{N}$.
\end{theorem}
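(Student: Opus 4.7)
The plan is to convert (\ref{m28-4}) into a quantitative Sobolev estimate on the $p$-variation increments, and then invoke a capacitary Borel--Cantelli argument. The latter rests on the basic inequality
\begin{equation*}
\text{\textit{Cap}}_{q,N}\{|F|>\lambda\} \leq \lambda^{-1}\|F\|_{q,N}, \qquad F\in\mathbb{D}^{q}_{N},
\end{equation*}
combined with sub-additivity of $\text{\textit{Cap}}_{q,N}$. If one can show that for each pair $(q,N)$ there exist $\alpha=\alpha(p)>0$ and $C=C(p,q,N)$ with
\begin{equation*}
\bigl\|d_{p}(\boldsymbol{w}^{(m)},\boldsymbol{w}^{(m+1)})\bigr\|_{q,N} \leq C\cdot 2^{-m\alpha}\qquad (m\geq 1),
\end{equation*}
then choosing $\lambda_m=2^{-m\alpha/2}$ makes $\sum_{m}\text{\textit{Cap}}_{q,N}\{d_{p}(\boldsymbol{w}^{(m)},\boldsymbol{w}^{(m+1)})>\lambda_m\}$ finite; sub-additivity then forces the $\limsup$ of these sets, which contains the set in (\ref{m28-4}) because $\sum_m \lambda_m<\infty$, to have $(q,N)$-capacity zero.

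Because the supremum-over-partitions definition of $d_p$ interacts poorly with Malliavin differentiation, the next step is to dominate $d_p$ by a dyadic Besov-type surrogate. A standard rough-path argument, essentially Lyons' scanning of arbitrary partitions by dyadic ones, yields for any $\theta>0$ a deterministic constant $C_{p,\theta}$ such that pointwise on $\boldsymbol{W}$
\begin{equation*}
d_p(\boldsymbol{w}^{(m)},\boldsymbol{w}^{(m+1)}) \leq C_{p,\theta}\sum_{k=1}^{2}\left(\sum_{n\geq 1} n^{1+\theta}\sum_{j=1}^{2^n}\bigl|L(w^{(m)})_{t_n^{j-1},t_n^j}^{k} - L(w^{(m+1)})_{t_n^{j-1},t_n^j}^{k}\bigr|^{p/k}\right)^{k/p}.
\end{equation*}
Minkowski's inequality in $\mathbb{D}^{q}_{N}$ (after enlarging $q$ if necessary, which is harmless since capacities are monotone in $q$) then reduces the task to bounding, cell by cell and level by level, the Sobolev norms of $L(w^{(m)})_{t_n^{j-1},t_n^j}^{k} - L(w^{(m+1)})_{t_n^{j-1},t_n^j}^{k}$ for $k=1,2$.

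These cell-differences lie in a fixed finite Wiener chaos -- the first chaos for $k=1$ and the second for $k=2$ -- since they are explicit polynomials of degree at most $k$ in the underlying Brownian increments. By Meyer's equivalence all their Sobolev norms are comparable, with constants depending only on $k$, $q$ and $N$, to their $L^2$ norms, and the Malliavin derivatives $D^{j}$ can be written down explicitly as deterministic tensors supported on dyadic subintervals. A direct calculation based on Brownian scaling and independence on disjoint intervals yields an estimate of the form
\begin{equation*}
\bigl\|L(w^{(m)})_{t_n^{j-1},t_n^j}^{k} - L(w^{(m+1)})_{t_n^{j-1},t_n^j}^{k}\bigr\|_{q,N} \leq C(q,N)\cdot 2^{-k\max(n,m+1)/2},
\end{equation*}
with the critical additional gain that the difference vanishes identically whenever $n\leq m$, because $w^{(m)}$ and $w^{(m+1)}$ then pass through identical endpoints. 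Summing geometrically over $n\geq m+1$ and $1\leq j\leq 2^n$ and feeding the result back into the Besov inequality produces the required $2^{-m\alpha(p)}$ decay, with e.g.\ $\alpha(p)=\tfrac{1}{2}-\tfrac{1}{p}>0$ for $p\in(2,3)$.

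The main obstacle is the Besov-type step at the \emph{second} level: one must use Chen's identity to express the L\'{e}vy-area difference over an arbitrary partition cell as a controlled combination of dyadic area differences and dyadic first-level increments, all the while keeping the constants deterministic so they survive the subsequent Minkowski step in $\mathbb{D}^{q}_{N}$. Once this multi-scale decomposition is in place, the remaining chaos estimates are essentially routine, and the higher Malliavin derivatives $D^{j}$ cost nothing extra because the random variables involved live in a chaos of order at most two.
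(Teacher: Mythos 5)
Your reduction to a dyadic Besov-type surrogate, the chaos classification of the individual cell differences, and the Borel--Cantelli skeleton are all correct, and essentially this scheme does prove (\ref{m28-4}) for $N=1$. The gap is at the Minkowski step for $N\geq 2$. The functional you want to feed into the capacity maximal inequality (\ref{m29-5}) is, in the paper's notation,
\begin{equation*}
\rho_j(\boldsymbol{w}^{(m)},\boldsymbol{w}^{(m+1)})^{p/j}=\sum_{n\geq 1}n^{\gamma}\sum_{k=1}^{2^{n}}\bigl|w^{(m),j}_{t_n^{k-1},t_n^{k}}-w^{(m+1),j}_{t_n^{k-1},t_n^{k}}\bigr|^{p/j},
\end{equation*}
and since $p\in(2,3)$ gives $p/1\in(2,3)$ and $p/2\in(1,3/2)$, the map $x\mapsto|x|^{p/j}$ is only twice (resp.\ once) continuously differentiable. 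Consequently this functional does not lie in $\mathbb{D}^q_N$ once $N\geq 2$ (for the second level it already fails at $N=2$), and (\ref{m29-5}) cannot be applied to it. Meyer's equivalence does not rescue this: the cell differences $X^j_{m,n,k}$ are indeed in a fixed finite chaos, but $|X^j_{m,n,k}|^{p/j}$ is a \emph{fractional} power of such a variable, hence a genuinely infinite-chaos object with limited Malliavin regularity, and the surrogate inherits that limitation. The paper gets around this precisely where your outline is silent: it applies sub-additivity of $\text{\textit{Cap}}_{q,N}$ one level further, over every scale $n$ and every dyadic cell $k$ (not only over $m$), and then replaces the event $\{|X^j_{m,n,k}|^{p/j}>c\}$ by the equivalent event $\{|X^j_{m,n,k}|^{2\tilde N}>c^{2j\tilde N/p}\}$ for a suitable natural number $\tilde N\geq N$. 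The functional $|X^j_{m,n,k}|^{2\tilde N}$ is then a genuine polynomial of the path, smooth in Malliavin's sense, so (\ref{m29-5}) applies to it for every $N\leq\tilde N$, and its Sobolev norms can be estimated directly. Without this power-raising device (or some substitute) your argument does not yield (\ref{m28-4}) for $N\geq 2$.

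Two secondary inaccuracies are worth flagging. First, you assert that the cell differences vanish identically for $n\leq m$; this is true at level one but false at level two -- refining $w^{(m)}$ to $w^{(m+1)}$ changes the L\'evy area over a coarse dyadic cell, and the paper computes this difference as $\tfrac{1}{2}\sum_r[\xi_{m+1}^{2r-1},\xi_{m+1}^{2r}]$, with $L^q$ norm of order $2^{-(m+n)/2}$, not zero. Second, the stated moment bound $2^{-j\max(n,m+1)/2}$ is not the correct scaling in either regime: for $n>m$ the level-one difference scales like $\sqrt{2^m}/2^n$ and the level-two difference like $2^m/2^{2n}$. The true decays still sum, so these slips do not destroy the plan's feasibility in principle, but the $N\geq 2$ issue above is structural.
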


It is obvious that (\ref{m28-4}) implies that for any $p$ between $2$ and $3$
there is a subset $A\subset \boldsymbol{W}$ such that \textit{Cap}$%
_{q,N}(A)=0$ for all $q>1$, $N\in \mathbb{N}$, and $\boldsymbol{w}^{(m)}$
converges in $G\Omega _{p}(\mathbb{R}^{d})$ to a limit $\boldsymbol{w}$ on $%
\boldsymbol{W}\setminus A$, which is a modification of $\boldsymbol{B}$.

Putting together with Lyons' universal limit theorem, we obtain immediately
the following quasi-sure limit theorem.

\begin{theorem}
\label{th6}Consider the Stratonovich's type stochastic differential
equations on the Wiener space $(\boldsymbol{W},\mathcal{B}(\boldsymbol{W}),%
\mathbb{P})$ (so that the coordinate process $w=(w_{t})_{t\geq 0}$ is a
standard Brownian motion)%
\begin{equation}
dy_{t}=\sum_{i=1}^{d}f_{i}(y_{t})\circ dw_{t}^{i}+f_{0}(y_{t})dt
\label{m28-5}
\end{equation}%
with initial data $y_{0}$, $f_{i}=(f_{i}^{j})~$($i=0,\cdots ,d$; $j=1,\cdots
,N$). Suppose $f_{i}^{j}$ and $f^{j}$ are in $C_{b}^{3}(\mathbb{R}^{N})$.
Suppose for each $m$, $y^{(m)}$ be the unique solution to the ordinary
equation%
\begin{equation*}
dy_{t}=\sum_{i=1}^{d}f_{i}(y_{t})dw_{t}^{(m),i}+f_{0}(y_{t})dt\text{.}
\end{equation*}%
Then for any $p\in (2,3)$ there is a slim subset $A\subset \boldsymbol{W}$
which is independent of (\ref{m28-5}) such that $\boldsymbol{y}%
^{(m)}=L_{2}(y^{(m)})$ converges to $\boldsymbol{y}=(1,y^{1},y^{2})$ on $%
\boldsymbol{W}\setminus A$, and $y_{t}=y_{0}+y_{0,t}^{1}$ is a version of
the unique strong solution to (\ref{m28-5}).
\end{theorem}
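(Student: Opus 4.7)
The plan is to combine Theorem \ref{th5} (quasi-sure $p$-variation convergence of the dyadic lifts $\boldsymbol{w}^{(m)}$) with Theorem \ref{th2} (continuity of the It\^{o}--Lyons solution map) and then identify the resulting limit with the classical Stratonovich strong solution. First I would reformulate (\ref{m28-5}) as a driftless rough differential equation driven by the space--time enhancement $\tilde{w}_{t}=(w_{t},t)\in\mathbb{R}^{d+1}$; because the $t$-coordinate is linear, its polygonal approximation equals itself, so the enhanced dyadic path $\tilde{\boldsymbol{w}}^{(m)}$ differs from $\boldsymbol{w}^{(m)}$ only by bounded-variation components and hence inherits from Theorem \ref{th5} the quasi-sure Cauchy property in $d_{p}$. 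Let $A\subset\boldsymbol{W}$ be the slim set delivered by Theorem \ref{th5}; then $\tilde{\boldsymbol{w}}^{(m)}(w)\to\tilde{\boldsymbol{w}}(w)$ in $d_{p}$ for every $w\in\boldsymbol{W}\setminus A$, and $A$ depends only on the Brownian lift, not on the vector fields $f_{0},f_{1},\ldots,f_{d}$.

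Second, with $p\in(2,3)$ fixed and the hypothesis $f_{i}\in C_{b}^{3}$ (so that $[p]+1=3$), Theorem \ref{th2} supplies a continuous It\^{o}--Lyons map $F(y_{0},\cdot):G\Omega_{p}(\mathbb{R}^{d+1})\to G\Omega_{p}(\mathbb{R}^{N})$ associated with the vector field $(f_{0},f_{1},\ldots,f_{d})$. By construction $y^{(m)}$ is the classical ODE solution driven by $\tilde{w}^{(m)}$, so $\boldsymbol{y}^{(m)}=F(y_{0},\tilde{\boldsymbol{w}}^{(m)})$ pointwise on $\boldsymbol{W}$. Continuity of $F(y_{0},\cdot)$ applied pointwise on $\boldsymbol{W}\setminus A$ then yields
\begin{equation*}
\boldsymbol{y}^{(m)}(w)\;\longrightarrow\;\boldsymbol{y}(w):=F\bigl(y_{0},\tilde{\boldsymbol{w}}(w)\bigr)\quad\text{in } d_{p},\ \text{for every } w\in\boldsymbol{W}\setminus A,
\end{equation*}
which produces both the convergence and the candidate limit $\boldsymbol{y}=(1,y^{1},y^{2})$. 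Since $A$ is the exceptional set from Theorem \ref{th5}, it is manifestly independent of the coefficients of (\ref{m28-5}).

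Third, I would identify $y_{t}=y_{0}+y_{0,t}^{1}$ as a version of the strong Stratonovich solution by appealing to the classical Wong--Zakai theorem: under the stated $C_{b}^{3}$ hypotheses, the ODE solutions driven by dyadic polygonal approximations converge $\mathbb{P}$-a.s.\ (uniformly on compacts) to the Stratonovich strong solution of (\ref{m28-5}). As $\mathbb{P}(A)=0$, the $\mathbb{P}$-a.s.\ Wong--Zakai limit must coincide almost surely with the path $y$ extracted above, which provides the required modification. The main technical content of Theorem \ref{th6} has already been invested in Theorem \ref{th5} and in Lyons' universal limit theorem; once both are in hand, the remaining work is essentially deterministic bookkeeping, the only subtle points being the space--time enhancement needed to absorb the drift $f_{0}(y_{t})\,dt$ and the verification that quasi-sure limits obtained via rough-path continuity almost surely agree with the classical Stratonovich solution.
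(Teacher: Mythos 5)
Your proposal is correct and follows exactly the route the paper intends: the paper gives no explicit proof of Theorem \ref{th6} beyond the remark that it follows immediately from Theorem \ref{th5} combined with Lyons' universal limit theorem (Theorem \ref{th2}), and you correctly supply the implicit details, namely the space--time enhancement $\tilde w=(w,t)\in\mathbb R^{d+1}$ to absorb the drift, the observation that the cross and time--time second-level components are Young integrals against $dt$ and hence inherit the quasi-sure $d_p$-Cauchy property from the first-level control already contained in Theorem \ref{th5}, pointwise application of the continuous extension of the It\^o--Lyons map on $\boldsymbol W\setminus A$, and the identification of the projected limit with the Stratonovich strong solution via the classical Wong--Zakai theorem on a set of full measure. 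The only step you state without proof is the transfer of the Cauchy property to $\tilde{\boldsymbol w}^{(m)}$; this does require a short Young-integral estimate bounding the $p/2$-variation of $\int (w^{(m+1)}-w^{(m)})\otimes dt$ by the $p$-variation of $w^{(m+1)}-w^{(m)}$, but it is standard and you flag it appropriately.
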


This kind of limit theorems for stochastic differential equations via
ordinary differential equations in the context of almost sure sense have
been discussed by E. McShane \cite{MR0402921}, E. Wong and M. Zakai \cite%
{MR0183023}, D. Stroock and S.R.S. Varadhan \cite{MR0400425}, and etc., see
Section 7, Chapter VI in N. Ikeda and S. Watanabe \cite{MR637061} for a
definite form and for further reference therein. By using Lyons' universal
limit theorem, the Wong-Zakai type limit theorem has been extended to other
rough differential equation driven by symmetric diffusions in \cite%
{MR1617044}, \cite{MR1617044}, by fractional Brownian motions in \cite%
{MR1883719} and by other Gaussian processes in \cite{MR2036784}, \cite%
{MR2604669}, and A.\ M. Davie \cite{MR2387018}, \cite{MR2377011} and etc.

In the context of quasi-sure analysis, partial results (i.e. for a solution
to a single differential equation or special Wiener functionals) have been
obtained by T. Kazumi \cite{MR1145641}, Z. Huang and J. G. Ren \cite%
{MR1082338}, J. G. Ren \cite{MR1056161}, P. Malliavin and D. Nualart \cite%
{MR1222364}, S. Fang \cite{MR1128493}. Our result is a natural
generalization of the preceding mentioned results, and our negligible set is
universal which is independent of the concerned Wiener functionals.

The capacity theory on the infinite dimensional space $\boldsymbol{W}$ was
first studied by P. Malliavin \cite{MR761263}. In fact, Malliavin introduced
the concept of slim sets as negligible sets -- those subsets of $\boldsymbol{%
W}$ with $(q,N)$-capacity zero for all $q>1$ and positive integer $N$, by
using the Ornstein-Uhlenbeck process on the Wiener space $\boldsymbol{W}$.
The current definition of \textit{Cap}$_{q,N}$ was gradually developed
through a series of work by I. Shigekawa \cite{MR1283120}, H. Sugita \cite%
{MR969026}, M.\ Fukushima \cite{MR735979}, \cite{MR729723}, \cite{MR1212113}%
, \cite{MR935979}, \cite{MR723601}, H. Kaneko \cite{MR856891}, M. Takeda 
\cite{MR767798}, J. G. Ren \cite{MR1056156}, F. Hirsch and S. Song \cite%
{MR1296778}, see P. Malliavin \cite{MR1450093} and K. It\^{o} \cite%
{MR1354162} for systematic expositions on slim sets, and M. Fukushima \cite%
{MR569058}, Z.M. Ma and M. R\"{o}ckner \cite{MR1214375} and N. Bouleau and
F. Hirsch \cite{MR1133391} for the capacity theory defined via analytic or
probabilistic potential theory.

Many important almost sure properties which hold for Brownian motion were
proved for the corresponding quasi-sure versions by D. Williams (see the
article by P. Meyer  \cite{meyer1}), M. Fukushima \cite{MR723601}, M. Takeda 
\cite{MR767798}, S. Fang \cite{MR1108611} etc. S. Kusuoka \cite{MR1111196}, 
\cite{MR1151548} initiated the study of non-linear analysis on abstract
Wiener spaces by using capacity theory.

The starting point in our approach is the capacity version of the Borel
Cantelli lemma, that is, if $\sum_{m=1}^{\infty }$\textit{Cap}$%
_{q,s}(A_{m})<\infty $ then \textit{Cap}$_{q,s}(\overline{\lim }%
_{m\rightarrow \infty }A_{m})=0$.

Let $p\in (2,3)$ be fixed, and consider 
\begin{equation*}
A_{m}=\left\{ w\in \boldsymbol{W}:d_{p}(\boldsymbol{w}^{(m+1)},\boldsymbol{w}%
^{(m)})>C\left( \frac{1}{2^{m}}\right) ^{\beta }\right\}
\end{equation*}%
for some $\beta >0$ and constant $C>0$. If we are able to show that 
\begin{equation}
\sum_{m=1}^{\infty }\text{\textit{Cap}}_{q,N}\left( A_{m}\right) <\infty
\label{m29-9}
\end{equation}%
then \textit{Cap}$_{q,N}(\overline{\lim }_{m\rightarrow \infty }A_{m})=0$ so
that 
\begin{equation*}
\text{\textit{Cap}}_{q,N}\left\{ \sum_{m=1}^{\infty }d_{p}(\boldsymbol{w}%
^{(m+1)},\boldsymbol{w}^{(m)})=\infty \right\} \leq \text{\textit{Cap}}%
_{q,N}(\overline{\lim }_{m\rightarrow \infty }A_{m})=0
\end{equation*}%
which yields Theorem \ref{th5}. Therefore, we would like to estimate 
\begin{equation}
\text{\textit{Cap}}_{q,N}\left\{ w\in \boldsymbol{W}:d_{p}(\boldsymbol{w}%
^{(m+1)},\boldsymbol{w}^{(m)})>\lambda \right\} \text{.}  \label{4-22-b1}
\end{equation}

There are few techniques available to bound the capacity such as (\ref%
{4-22-b1}) in contrast to corresponding almost sure statements. In fact the
only effective tool to the knowledge of the present authors is the capacity
maximal inequality (also called the Tchebycheff inequality, see 1.2.5 on
page 92 and 2.2 on page 96 in \cite{MR1450093}), which says that, if $u\in 
\mathbb{D}_{N}^{q}$ and if $u$ is lower semi-continuous or continuous with
respect to the capacity \textit{Cap}$_{q,s}$, then%
\begin{equation}
\text{\textit{Cap}}_{q,N}\left\{ w\in \boldsymbol{W}:u(w)>\lambda \right\}
\leq \frac{C_{q,N}}{\lambda }||u||_{q,N}\text{ \ \ }\forall \lambda >0
\label{m29-5}
\end{equation}%
where $C_{q,N}$ is a constant depending only on $d$, $q$ and $N$. This
requires to estimate the Sobolev norms of $u$. Unfortunately, we are unable
to show (and we do not believe it is true) that $w\rightarrow d_{p}(%
\boldsymbol{w}^{(m+1)},\boldsymbol{w}^{(m)})$ is differentiable in the
Malliavin sense. Instead we consider a metric over paths which dominates the 
$p$-variation distance, but differentiable in Malliavin sense and still good
enough for Brownian motion.

In what follows, $p\in (2,3)$ and $\gamma >\frac{p}{2}-1$ be fixed.

If $\boldsymbol{w}=(1,w^{1},w^{2})$ and $\boldsymbol{\tilde{w}}=(1,\tilde{w}%
^{1},\tilde{w}^{2})$ are two functions on $\Delta $ taking values in $T^{2}(%
\mathbb{R}^{d})$, we consider 
\begin{equation}
\rho _{j}(\boldsymbol{w},\boldsymbol{\tilde{w}})=\left( \sum_{n=1}^{\infty
}n^{\gamma }\sum_{k=1}^{2^{n}}\left\vert w_{t_{n}^{k-1},t_{n}^{k}}^{j}-%
\tilde{w}_{t_{n}^{k-1},t_{n}^{k}}^{j}\right\vert ^{\frac{p}{j}}\right) ^{%
\frac{j}{p}}  \label{d-s1}
\end{equation}%
where $j=1$ or $2$. We will use $\rho _{j}(\boldsymbol{w})$ to denote $\rho
_{j}(\boldsymbol{w},\boldsymbol{\tilde{w}})$ with $\boldsymbol{\tilde{w}}%
=(1,0,0)$. $\rho _{j}$ were invented and used in B. Hambly and T. Lyons \cite%
{MR1617044} for constructing the stochastic area processes associated with
Brownian motions on the Sierpinski gasket. These functionals were used in M.
Ledoux, Z. Qian and T. Zhang \cite{MR1935127} to show the large deviation
principle for Brownian motion under the topology generated by the $p$%
-variation distance. The following estimates have been contained implicitly
in \cite{MR1617044} and made explicit in \cite{MR2036784} and \cite%
{MR1935127}.

\begin{lemma}
\label{le1}Suppose $\gamma >\frac{p}{2}-1$. Then there is a positive
constant $C$ depending only on $\gamma $, $d$ and $p$ such that 
\begin{equation}
\left( \sup_{D}\sum_{l}\left\vert w_{t_{l-1},t_{l}}^{1}\right\vert
^{p}\right) ^{\frac{1}{p}}\leq C\rho _{1}(\boldsymbol{w})\text{,}
\label{es-k1}
\end{equation}%
\begin{equation}
\left( \sup_{D}\sum_{l}\left\vert w_{t_{l-1},t_{l}}^{2}\right\vert ^{\frac{p%
}{2}}\right) ^{\frac{2}{p}}\leq C\left( \rho _{1}(\boldsymbol{w})^{2}+\rho
_{2}(\boldsymbol{w})\right)  \label{es-k2}
\end{equation}%
where $\sup_{D}$ takes over finite partitions $D$ of $[0,1]$, and 
\begin{equation}
d_{p}(\boldsymbol{w},\boldsymbol{\tilde{w}})\leq C\max \left\{ \rho _{1}(%
\boldsymbol{w},\boldsymbol{\tilde{w}}),\rho _{2}(\boldsymbol{w},\boldsymbol{%
\tilde{w}}),\rho _{1}(\boldsymbol{w},\boldsymbol{\tilde{w}})\left( \rho _{1}(%
\boldsymbol{w})+\rho _{1}(\boldsymbol{\tilde{w}})\right) \right\} \text{.}
\label{m29-10}
\end{equation}
\end{lemma}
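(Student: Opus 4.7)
The three inequalities collected here are essentially classical in the theory of $p$-variation and can be proved by a common dyadic-decomposition strategy, as developed in \cite{MR1617044} and \cite{MR1935127}. The geometric ingredient is that for any sub-interval $[s,t]\subseteq [0,1]$ there is a disjoint decomposition $[s,t]=\bigcup_{j=1}^{K}I_{j}$ into dyadic intervals containing at most two intervals per level, beginning at the coarsest level $n_{0}(s,t)\simeq \lfloor\log_{2}\tfrac{1}{t-s}\rfloor$ and proceeding towards finer levels. This is obtained by greedily peeling off the largest dyadic sub-interval of $[s,t]$ and iterating on the two remaining end pieces.

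For the first inequality, I would fix an arbitrary finite partition $D=\{s_{l}\}$ of $[0,1]$ and apply the decomposition above to each $[s_{l-1},s_{l}]$, writing
\[
w_{s_{l-1},s_{l}}^{1}=\sum_{n\ge n_{l}}\sum_{I\in J_{n}^{l}}w_{I}^{1},\qquad |J_{n}^{l}|\leq 2,
\]
and then bound $|w_{s_{l-1},s_{l}}^{1}|^{p}$ via a H\"older inequality in $n$ that inserts the weight $n^{\gamma/p}$ at scale $n$. Summing over $l$ and using the fact that each dyadic cell at level $n$ is used by at most a bounded number of partition intervals collapses the bound to $C\rho_{1}(\boldsymbol{w})^{p}$; the auxiliary series in $n$ converges under the hypothesis $\gamma>\tfrac{p}{2}-1$.

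For the second inequality, I would apply Chen's identity to the same dyadic decomposition of $[s_{l-1},s_{l}]$:
\[
w_{s_{l-1},s_{l}}^{2}=\sum_{j}w_{I_{j}}^{2}+\sum_{j<j'}w_{I_{j}}^{1}\otimes w_{I_{j'}}^{1}.
\]
The diagonal sum $\sum_{j}w_{I_{j}}^{2}$ is handled exactly as in Step 1 and yields the $\rho_{2}(\boldsymbol{w})$ contribution. The off-diagonal sum is treated by pairing scales $(n,n')$ and applying H\"older/Cauchy--Schwarz in a double-counting argument, so that its total $(p/2)$-norm is dominated by $\rho_{1}(\boldsymbol{w})^{2}$. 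The third inequality follows by repeating this analysis for the pair $(\boldsymbol{w},\boldsymbol{\tilde{w}})$: the first-level part gives $\rho_{1}(\boldsymbol{w},\boldsymbol{\tilde{w}})$ and the diagonal second-level part gives $\rho_{2}(\boldsymbol{w},\boldsymbol{\tilde{w}})$, while expanding
\[
w_{I_{j}}^{1}\otimes w_{I_{j'}}^{1}-\tilde{w}_{I_{j}}^{1}\otimes \tilde{w}_{I_{j'}}^{1}=(w_{I_{j}}^{1}-\tilde{w}_{I_{j}}^{1})\otimes w_{I_{j'}}^{1}+\tilde{w}_{I_{j}}^{1}\otimes (w_{I_{j'}}^{1}-\tilde{w}_{I_{j'}}^{1})
\]
reproduces the mixed term $\rho_{1}(\boldsymbol{w},\boldsymbol{\tilde{w}})\bigl(\rho_{1}(\boldsymbol{w})+\rho_{1}(\boldsymbol{\tilde{w}})\bigr)$.

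The main technical obstacle is the bookkeeping for the off-diagonal double sum in the second-level estimate: the pairs $(I_{j},I_{j'})$ can live at very different dyadic scales, and one must allocate the $n^{\gamma}$ weights across both scales so that the resulting double series converges while each dyadic cell is charged only a bounded number of times over $l$. This is exactly where $\gamma>\tfrac{p}{2}-1$ enters, and it is consistent with the scaling $|w_{I}^{1}|\sim |I|^{1/2}$ expected for Brownian-type paths.
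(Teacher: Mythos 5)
The paper does not actually prove Lemma \ref{le1}; it only refers the reader to \cite{MR1617044}, \cite{MR2036784}, and \cite{MR1935127}. Your sketch does follow the dyadic-decomposition strategy of those references (peel off at most two dyadic intervals per level, H\"older across the scale index, Chen's identity to split the second level into a diagonal piece plus an off-diagonal cross piece, and a product-rule expansion for the difference estimate), so the overall route is the intended one.

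There is, however, a concrete quantitative gap in your first-level step. When you bound $|w^1_{s_{l-1},s_l}|^p$ by H\"older across the scales with the split $n^{-\gamma/p}\cdot n^{\gamma/p}$, the dual exponent is $p/(p-1)$, so the auxiliary series is $\sum_n n^{-\gamma/(p-1)}$, which converges if and only if $\gamma>p-1$. Your assertion that it converges under $\gamma>\tfrac{p}{2}-1$ is therefore false for $p\in(2,3)$, since $p-1\in(1,2)$ is strictly larger than $\tfrac{p}{2}-1\in(0,\tfrac12)$. The threshold $\gamma>\tfrac{p}{2}-1$ is the one produced by the H\"older step for the \emph{second}-level quantity, where one works in $\ell^{p/2}$ with dual exponent $p/(p-2)$ and the auxiliary series becomes $\sum_n n^{-2\gamma/(p-2)}$; it does not close the argument for \eqref{es-k1}. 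So either the first-level estimate needs a genuinely different argument under the stated hypothesis, or --- as in the references, which impose the stronger weight condition --- the hypothesis must be $\gamma>p-1$ for your H\"older split to work; you should at least have noticed that your own computation does not close under $\gamma>\tfrac{p}{2}-1$. Separately, the off-diagonal bookkeeping that you correctly identify as the "main technical obstacle" for \eqref{es-k2} and \eqref{m29-10} is left entirely unexecuted: one must bound $\sum_{j<j'}w^1_{I_j}\otimes w^1_{I_{j'}}$ uniformly over the partition, distribute the $n^\gamma$ weight across the two scales so that both series converge, and then check that after summing over $l$ each dyadic cell is charged only a bounded number of times --- none of which is carried out in your sketch.
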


The idea in our approach is to replace $d_{p}$ by the right-hand side of (%
\ref{m29-10}). Therefore, instead of considering the capacity of $\{d_{p}(%
\boldsymbol{w}^{(m+1)},\boldsymbol{w}^{(m)})>\lambda \}$, we consider the
following%
\begin{equation*}
\rho _{j}(\boldsymbol{w}^{(m+1)},\boldsymbol{w}^{(m)})=\left(
\sum_{n=1}^{\infty }n^{\gamma }\sum_{k=1}^{2^{n}}\left\vert
w_{t_{n}^{k-1},t_{n}^{k}}^{(m+1),j}-w_{t_{n}^{k-1},t_{n}^{k}}^{(m),j}\right%
\vert ^{\frac{p}{j}}\right) ^{\frac{j}{p}}
\end{equation*}%
where $j=1,2$ and $m=1,2,\cdots $. Let%
\begin{equation*}
u_{j}^{(m)}(w)\equiv \rho _{j}(\boldsymbol{w}^{(m+1)},\boldsymbol{w}^{(m)})^{%
\frac{p}{j}}=\sum_{n=1}^{\infty }n^{\gamma }\sum_{k=1}^{2^{n}}\left\vert
w_{t_{n}^{k-1},t_{n}^{k}}^{(m+1),j}-w_{t_{n}^{k-1},t_{n}^{k}}^{(m),j}\right%
\vert ^{\frac{p}{j}}\text{.}
\end{equation*}%
Then, clearly, for each $N=1,2,\cdots ,$%
\begin{equation*}
u_{j}^{(m),N}(w)=\sum_{n=1}^{N}n^{\gamma }\sum_{k=1}^{2^{n}}\left\vert
w_{t_{n}^{k-1},t_{n}^{k}}^{(m+1),j}-w_{t_{n}^{k-1},t_{n}^{k}}^{(m),j}\right%
\vert ^{\frac{p}{j}}
\end{equation*}%
is continuous on $\boldsymbol{W}$ (which is equipped with the uniform norm
over $[0,1]$), and $u_{j}^{(m)}(w)=\sup_{N}u_{j}^{(m),N}(w)$. Therefore $%
u_{j}^{(m)}$ is lower semi-continuous on $\boldsymbol{W}$, and moreover $%
u_{j}^{(m)}\in \mathbb{D}_{1}^{q}$ for any $q\geq 1$. Therefore we may apply
(\ref{m29-5}) to deduce that%
\begin{equation}
\text{\textit{Cap}}_{q,1}\left\{ \rho _{j}(\boldsymbol{w}^{(m+1)},%
\boldsymbol{w}^{(m)})^{\frac{p}{j}}>\lambda \right\} \leq \frac{C_{q}}{%
\lambda }\left\Vert \rho _{j}(\boldsymbol{w}^{(m+1)},\boldsymbol{w}^{(m)})^{%
\frac{p}{j}}\right\Vert _{q,1}  \label{m29-6}
\end{equation}%
and similarly%
\begin{eqnarray}
&&\text{\textit{Cap}}_{q,1}\left\{ \rho _{1}(\boldsymbol{w}^{(m)})^{p}\rho
_{1}(\boldsymbol{w}^{(m+1)},\boldsymbol{w}^{(m)})^{p}>\lambda \right\}  
\notag \\
&\leq &\frac{C_{q}}{\lambda }\left\Vert \rho _{1}(\boldsymbol{w}%
^{(m)})^{p}\rho _{1}(\boldsymbol{w}^{(m+1)},\boldsymbol{w}%
^{(m)})^{p}\right\Vert _{q,1}  \label{m29-7}
\end{eqnarray}%
for any $\lambda >0$, where $C_{q}>0$ depends only on $q$ and $d$. \ In the
next section we will establish the necessary estimates to ensure (\ref{m29-9}%
) for the case that $N=1$.

This approach can not be extended to $(q,N)$-capacity case with $N\geq 2$,
this is because of a simple reason that our dominated distance $\rho _{j}(%
\boldsymbol{w}^{(m+1)},\boldsymbol{w}^{(m)})^{\frac{p}{j}}$ does not belong
to $\mathbb{D}_{N}^{q}$ for $N\geq 2$. We need new idea to estimate the $%
(q,N)$-capacity for $N\geq 2$. The observation to get around this difficulty
is that the capacity of $\{\rho _{j}(\boldsymbol{w}^{(m+1)},\boldsymbol{w}%
^{(m)})^{p/j}>\lambda \}$ is \textquotedblleft evenly\textquotedblright\
distributed over the dyadic partitions, which allows to reduce our task to
estimating the capacities of some polynomials of Brownian motion sample
paths, which will be explained in the last sectionof the article, where we
conclude the proof of Theorem \ref{th5}.

\section{Some technical estimates}

In this section we establish several technical estimates which will be used
in the construction quasi-surely defined geometric rough paths associated
with Brownian motion.

We are going to use the following notations. If $J\subset \lbrack 0,\infty )$
is a finite interval, then $1_{J}$ is the characteristic function of $J$,
and $\boldsymbol{1}_{J}\in \boldsymbol{H}$, which is $\mathbb{R}^{d}$-valued
function with the same component $\int_{0}^{\cdot }1_{J}(s)ds$. Hence $|%
\boldsymbol{1}_{J}|_{\boldsymbol{H}}=\sqrt{d}\sqrt{|J|}$ where $|J|$ denotes
the length of the interval $J$. We will frequently use the following
elementary fact: if $\{J_{i}:i=1,\cdots ,n\}$ is a family of \textit{disjoint%
} finite intervals, then%
\begin{equation}
\left\vert \sum_{i=1}^{n}\boldsymbol{1}_{J_{i}}\right\vert _{\boldsymbol{H}}=%
\sqrt{d}\sqrt{\sum_{i=1}^{n}|J_{i}|}\text{ .}  \label{4-17-1}
\end{equation}%
The corresponding fact for the increments of Brownian motion instead of
characteristic functions is the context of the following lemma.

\begin{lemma}
\label{le2}Then there is a constant $C>0$ depending only on $d$, such that%
\begin{equation}
\left\Vert \sum_{i=1}^{N}\xi _{i}\otimes \tilde{\xi}_{i}\right\Vert _{q}\leq
Cq\sqrt{N}  \label{t-t-5}
\end{equation}%
for any $\xi _{i}$, $\tilde{\xi}_{j}$ which are independent valued in $%
\mathbb{R}^{d}$, with the standard normal $N(0,1_{\mathbb{R}^{d}})$, for any 
$q\geq 1$ and $N\in \mathbb{N}$.
\end{lemma}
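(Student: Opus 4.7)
The plan is to reduce the claimed estimate to a coordinatewise second-chaos moment bound. Writing $M := \sum_{i=1}^{N}\xi_{i}\otimes\tilde{\xi}_{i}\in(\mathbb{R}^{d})^{\otimes 2}$, its $(a,b)$-entry is $M^{ab}=\sum_{i=1}^{N}\xi_{i}^{a}\tilde{\xi}_{i}^{b}$. Since the scalar components $\{\xi_{i}^{a},\tilde{\xi}_{j}^{b}\}$ form an i.i.d.\ family of standard Gaussians, each summand $\xi_{i}^{a}\tilde{\xi}_{i}^{b}$ is a product of two distinct Hermite-$1$ polynomials and therefore lives in the second homogeneous Wiener chaos of the Gaussian space generated by $(\xi,\tilde{\xi})$. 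A direct computation gives $\|M^{ab}\|_{2}^{2}=\sum_{i=1}^{N}\mathbb{E}[(\xi_{i}^{a})^{2}]\,\mathbb{E}[(\tilde{\xi}_{i}^{b})^{2}]=N$.

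For $q\ge 2$, Nelson's hypercontractivity applied to the second chaos yields $\|M^{ab}\|_{q}\le (q-1)\|M^{ab}\|_{2}\le q\sqrt{N}$. Alternatively, one may condition on $\tilde{\xi}$: given $\tilde{\xi}$, $M^{ab}$ is Gaussian with variance $V=\sum_{i}(\tilde{\xi}_{i}^{b})^{2}\sim\chi_{N}^{2}$, so $\|M^{ab}\|_{q}\le C\sqrt{q}\,\|V^{1/2}\|_{q}$, and the elementary estimate $\mathbb{E}[(\chi_{N}^{2})^{q/2}]\le (N+q)^{q/2}$ gives $\|V^{1/2}\|_{q}\le\sqrt{N+q}\le\sqrt{N}+\sqrt{q}$, whence $\|M^{ab}\|_{q}\le Cq\sqrt{N}$ for $q,N\ge 1$. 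Having controlled each coordinate, I apply the triangle inequality in $L^{q/2}$ to the identity $|M|^{2}=\sum_{a,b}(M^{ab})^{2}$:
\[
\bigl\||M|^{2}\bigr\|_{q/2}\;\le\;\sum_{a,b}\|(M^{ab})^{2}\|_{q/2}\;=\;\sum_{a,b}\|M^{ab}\|_{q}^{2}\;\le\;C^{2}d^{2}q^{2}N,
\]
so that $\|M\|_{q}\le Cdq\sqrt{N}$, which is (\ref{t-t-5}). The range $q\in[1,2)$ follows from monotonicity of $L^{q}$-norms: $\|M\|_{q}\le\|M\|_{2}\le d\sqrt{N}\le Cdq\sqrt{N}$.

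There is no serious obstacle: the key structural fact is simply that $M$ sits in the second Wiener chaos of the joint Gaussian space, at which point hypercontractivity delivers the correct linear-in-$q$ and $\sqrt{N}$-in-$N$ growth asserted in the statement. The $\sqrt{N}$ rate matches the $L^{2}$ scaling of a sum of $N$ centred uncorrelated rank-one tensors, while the factor $q$ is the price paid for passing from $L^{2}$ to $L^{q}$ at second-chaos level; no refinement of this trade-off is required for the application to the $\rho_{j}$-functionals.
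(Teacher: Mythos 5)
Your proof is correct and follows essentially the same route the paper intends: the paper's own proof is the one-line remark that the bound ``follows from a simple application of the hypercontractivity of the O-U semigroup,'' and your hypercontractivity argument is precisely the fleshed-out version of that remark. You reduce coordinatewise to $M^{ab}=\sum_i\xi_i^a\tilde\xi_i^b$, note membership in the second Wiener chaos, compute $\|M^{ab}\|_2^2=N$ (the cross terms vanish by independence), apply the second-chaos hypercontractive bound $\|M^{ab}\|_q\le(q-1)\|M^{ab}\|_2$, and then aggregate over the $d^2$ coordinates via the $L^{q/2}$ triangle inequality, handling $q\in[1,2)$ separately by monotonicity. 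The alternative Gaussian-conditioning argument you sketch (conditioning on $\tilde\xi$, then controlling moments of a $\chi^2_N$ variance) is a perfectly valid elementary substitute for hypercontractivity, and is worth having in mind since it makes the $q\sqrt N$ rate transparent without invoking the Ornstein--Uhlenbeck machinery; but it does not represent a substantive departure from the paper, which supplies no detail of its own for this lemma.
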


This lemma follows from a simple application of the hypercontractivity of
the O-U semigroup.

The increment over the interval $J_{n}^{k}\equiv (t_{n}^{k-1},t_{n}^{k}]$ of 
$w\in \boldsymbol{W}$ is denoted by $\xi _{n}^{k}(w)$ or simply by $\xi
_{n}^{k}$ (which denotes the functional $w\rightarrow \xi _{n}^{k}(w)$ as
well by abusing the notation), if no confusion may arise. That is%
\begin{equation}
\xi _{n}^{k}=w_{\frac{k}{2^{n}}}-w_{\frac{k-1}{2^{n}}}\text{, \ \ }%
k=1,\cdots ,2^{n}\text{.}  \label{4-17-3}
\end{equation}

For each $n$, since $\{J_{n}^{k}:k=1,\cdots ,2^{n}\}$ are disjoint, $\{$ $%
\xi _{n}^{k}:k=1,\cdots ,2^{n}\}$ are independent, identically distributed
with normal distribution $N(0,2^{-n}I_{\mathbb{R}^{d}})$.

Recall that $\boldsymbol{w}^{(n)}=L_{2}(w^{(n)})$ with first level component 
$w^{(n),1}$ and second level $w^{(n),2}$ respectively, so that 
\begin{equation}
\left\{ 
\begin{array}{ccc}
w_{s,t}^{(n),1} & = & w_{t}^{(n)}-w_{s}^{(n)}\text{,} \\ 
w_{s,t}^{(n),2} & = & \int_{s<t_{1}<t_{2}<t}dw_{t_{1}}^{(n)}\otimes
dw_{t_{2}}^{(n)}\text{.}%
\end{array}%
\right.  \label{m21-2}
\end{equation}

It is easy to see that 
\begin{equation}
w_{t_{n}^{k-1},t_{n}^{k}}^{(m),1}=\left\{ 
\begin{array}{ccc}
\xi _{n}^{k}\text{ \ \ \ \ \ \ \ \ \ } & \text{for }n<m\text{,\ } &  \\ 
\frac{2^{m}}{2^{n}}\xi _{m}^{k(n,m)} & \text{for }n\geq m\text{ \ } & 
\end{array}%
\right. \text{\ \ \ }  \label{m-02}
\end{equation}%
where $k=1,\cdots ,2^{n}$, and in the case $n>m$, $k(n,m)$ is the unique
integer $l$ between $1$ and $2^{m}$ such that 
\begin{equation}
t_{m}^{l-1}\leq t_{n}^{k-1}<t_{n}^{k}<t_{m}^{l}\text{.}  \label{m-03}
\end{equation}

In order to write down some formulas which will be used in what follows, it
is better to use Possion bracket operations $[,]$ and $\{,\}$, that is, if $%
\xi ,\eta \in \mathbb{R}^{d}$, then%
\begin{equation}
\lbrack \xi ,\eta ]=\xi \otimes \eta -\eta \otimes \xi  \label{4-17-4}
\end{equation}%
and%
\begin{equation}
\{\xi ,\eta \}=\xi \otimes \eta +\eta \otimes \xi \text{,}  \label{4-17-5}
\end{equation}%
while we will reserve the sharp bracket $\langle a,b\rangle $ to denote the
scalar product in the Euclidean spaces, or in Hilbert space $\boldsymbol{H}%
^{\otimes k}$.

With these notations, we have (see Section 4.2 in \cite{MR2036784} for
details)

\begin{equation}
w_{t_{n}^{k-1},t_{n}^{k}}^{(m),2}=\frac{1}{2}\xi _{n}^{k}\otimes \xi
_{n}^{k}+\frac{1}{2}\sum_{\substack{ r,s=2^{m-n}(k-1)+1  \\ r<s}}%
^{2^{m-n}k}[\xi _{m}^{r},\xi _{m}^{s}]  \label{m19.01}
\end{equation}%
for $n<m$, so that, if $n<m$, then 
\begin{equation}
w_{t_{n}^{k-1},t_{n}^{k}}^{(m+1),2}-w_{t_{n}^{k-1},t_{n}^{k}}^{(m),2}=\frac{1%
}{2}\sum_{r=2^{m-n}(k-1)+1}^{2^{m-n}k}[\xi _{m+1}^{2r-1},\xi _{m+1}^{2r}]%
\text{.}  \label{i-as1}
\end{equation}%
If $n\geq m$, then 
\begin{equation}
w_{t_{n}^{k-1},t_{n}^{k}}^{(m),2}=\frac{1}{2}\frac{2^{2m}}{2^{2n}}\xi
_{m}^{k(n,m)}\otimes \xi _{m}^{k(n,m)}\text{.}  \label{m19.02}
\end{equation}

\subsection{Moment estimates under Sobolev norms}

In this part, let $p\in (2,3)$ is a constant, $d$ is the dimension, $n,m\in 
\mathbb{N}$. We wish to develop several moment estimates for $%
w_{t_{n}^{k-1},t_{n}^{k}}^{(m+1),j}-w_{t_{n}^{k-1},t_{n}^{k}}^{(m),j}$ (for $%
j=1,2$). \ 

\begin{lemma}
\label{lem1a}There is a constant $C$ depending only on $d$ such that%
\begin{equation}
\left\Vert w_{t_{n}^{k-1},t_{n}^{k}}^{(m),j}\right\Vert _{q}\leq \left\{ 
\begin{array}{cc}
C\left( \sqrt{q}\frac{1}{\sqrt{2^{n}}}\right) ^{j} & \text{ \ for }n<m\text{
,} \\ 
C\left( \sqrt{q}\frac{\sqrt{2^{m}}}{2^{n}}\right) ^{j} & \text{\ for }n\geq m%
\end{array}%
\right.  \label{4-17-6}
\end{equation}%
where $j=1,2$, 
\begin{equation}
\left\Vert
w_{t_{n}^{k-1},t_{n}^{k}}^{(m+1),1}-w_{t_{n}^{k-1},t_{n}^{k}}^{(m),1}\right%
\Vert _{q}\leq \left\{ 
\begin{array}{cc}
0\text{, \ \ \ \ \ \ \ } & \text{ \ if }n\leq m\text{, \ } \\ 
C\sqrt{q}\sqrt{\frac{2^{m}}{2^{2n}}} & \text{ \ if }n\geq m\text{,}%
\end{array}%
\right.  \label{m22-4}
\end{equation}%
and%
\begin{equation}
\left\Vert
w_{t_{n}^{k-1},t_{n}^{k}}^{(m+1),2}-w_{t_{n}^{k-1},t_{n}^{k}}^{(m),2}\right%
\Vert _{q}\leq \left\{ 
\begin{array}{cc}
Cq\sqrt{\frac{1}{2^{m+n}}} & \text{ \ if }n\leq m\text{, \ } \\ 
Cq\frac{2^{m}}{2^{2n}}\text{ \ \ \ \ } & \text{ \ if }n\geq m\text{.}%
\end{array}%
\right.  \label{m22-05}
\end{equation}%
for any $q\geq 1$.
\end{lemma}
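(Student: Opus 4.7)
The strategy is a straightforward case analysis on the relation between $n$ and $m$, reducing each quantity to an explicit polynomial in the i.i.d.\ Gaussian increments $\xi_n^k$, and then applying two tools: the standard moment bound $\|\xi\|_q \le C\sqrt{q}\,\sigma$ for centred Gaussian $\xi$ in $\mathbb{R}^d$ with standard deviation $\sigma$, together with the second chaos estimate of Lemma~\ref{le2}.

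\textbf{Estimate (\ref{4-17-6}).} For $j=1$ this is immediate from (\ref{m-02}): when $n<m$ the quantity is the single Gaussian $\xi_n^k\sim N(0,2^{-n}I)$, and when $n\ge m$ it is $\frac{2^m}{2^n}\xi_m^{k(n,m)}\sim N\bigl(0,\tfrac{2^m}{2^{2n}}I\bigr)$, so Gaussian moments yield the claimed bounds. For $j=2$ with $n\ge m$, formula (\ref{m19.02}) shows the quantity is $\tfrac{1}{2}\tfrac{2^{2m}}{2^{2n}}\xi_m^{k(n,m)}\otimes\xi_m^{k(n,m)}$, whose $L^q$-norm is controlled by $\|\xi_m^{k(n,m)}\|_{2q}^2 \le Cq\cdot 2^{-m}$, giving the stated bound. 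For $j=2$ with $n<m$ I split (\ref{m19.01}) into a diagonal piece $\tfrac12 \xi_n^k\otimes \xi_n^k$ (of $L^q$-norm $\le Cq\cdot 2^{-n}$) and an off-diagonal sum $\tfrac12\sum_{r<s}[\xi_m^r,\xi_m^s]$ ranging over $r,s\in\{2^{m-n}(k-1)+1,\ldots,2^{m-n}k\}$; pairing indices and normalising by $2^{m/2}$ expresses this as $2^{-m}$ times a sum covered by Lemma~\ref{le2} with roughly $\binom{2^{m-n}}{2}$ independent tensor products, giving an $L^q$-bound of order $q\cdot 2^{-m}\cdot 2^{m-n}=q\cdot 2^{-n}$, as required.

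\textbf{Estimate (\ref{m22-4}).} Using (\ref{m-02}) I verify case by case that when $n<m$ both terms equal $\xi_n^k$, and when $n=m$ both equal $\xi_m^k$, so the difference vanishes for $n\le m$. For $n>m$ the two terms are the Gaussians $\frac{2^{m+1}}{2^n}\xi_{m+1}^{k(n,m+1)}$ and $\frac{2^m}{2^n}\xi_m^{k(n,m)}$; both have standard deviation of order $\sqrt{2^m}/2^n$, and the triangle inequality together with the Gaussian moment bound yields the stated estimate.

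\textbf{Estimate (\ref{m22-05}).} For $n<m$, formula (\ref{i-as1}) gives the difference as $\tfrac12\sum_{r}[\xi_{m+1}^{2r-1},\xi_{m+1}^{2r}]$, a sum of $2^{m-n}$ antisymmetric tensor products in \emph{independent} pairs of Gaussians, each of variance $2^{-(m+1)}I$. Normalising the Gaussians by $2^{(m+1)/2}$ and invoking Lemma~\ref{le2} (applied separately to the two orderings making up each bracket) produces the bound $Cq\cdot 2^{-(m+1)}\sqrt{2^{m-n}}\le Cq/\sqrt{2^{m+n}}$. The boundary case $n=m$ is handled by direct subtraction of (\ref{m19.02}) and (\ref{m19.01}): the diagonal squares cancel and only the single bracket $\tfrac12[\xi_{m+1}^{2k-1},\xi_{m+1}^{2k}]$ remains, with $L^q$-norm $\le Cq\cdot 2^{-(m+1)}=Cq\cdot 2^m/2^{2n}$. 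For $n>m$ the two terms of (\ref{m19.02}) at levels $m$ and $m+1$ are of orders $q\cdot 2^m/2^{2n}$ and $q\cdot 2^{m+1}/2^{2n}$ respectively, and the triangle inequality finishes the estimate.

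\textbf{Main obstacle.} No single step is genuinely hard; the only substantive analytic input is Lemma~\ref{le2} (hypercontractivity on the second Wiener chaos), which handles the cross-terms in (\ref{m19.01}) and (\ref{i-as1}). The main source of friction is simply the careful enumeration of the three regimes $n<m$, $n=m$, $n>m$ and the bookkeeping that ensures the constants indeed collapse to $Cq$ (or $C\sqrt{q}$ for $j=1$), with the correct powers of $2^n$ and $2^m$.
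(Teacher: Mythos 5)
The proof is mostly fine and tracks the paper closely, but there is one real gap: in the $j=2$, $n<m$ case of (\ref{4-17-6}) you bound the off-diagonal sum $\frac12\sum_{r<s}[\xi_m^r,\xi_m^s]$ by ``invoking Lemma~\ref{le2} with roughly $\binom{2^{m-n}}{2}$ independent tensor products.'' But the hypothesis of Lemma~\ref{le2} is that \emph{all} the Gaussians entering the tensor products are mutually independent; in the sum $\sum_{r<s}\xi_m^r\otimes\xi_m^s$ a single increment $\xi_m^r$ appears in many different terms, so Lemma~\ref{le2} does not apply as stated. Nor can you easily fix this by partitioning the index set into independent blocks: grouping by shift $s-r=k$ forces you to split into roughly $k+1$ independent sub-sums of size $(L-k)/(k+1)$ with $L=2^{m-n}$, and resumming the resulting bounds over $k$ gives $O(q\,L^2\,2^{-m}) = O(q\,2^{m-2n})$, which is worse than the needed $O(q\,2^{-n})$ exactly in the regime $n<m$. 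The arithmetic you wrote happens to coincide with the truth only because the cross-terms in the $L^2$ computation actually vanish, and that vanishing must be proved.

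The paper closes this gap precisely where you skipped. It invokes hypercontractivity abstractly (``second chaos, so $I_q\le C q I_2$'') and then computes the $L^2$-norm head-on by writing $\sum_{r<s}\xi_m^r\otimes\xi_m^s=\sum_s\eta_s\otimes\xi_m^s$ with $\eta_s=\sum_{r<s}\xi_m^r$, observing that for $s<r$ the factor $\xi_m^{r,j}$ is independent of $\eta_s^i\xi_m^{s,j}\eta_r^i$, so the cross-terms have zero expectation. This martingale-type orthogonality is what makes $\|\cdot\|_2$ come out to $O(2^{-n})$. You need this computation (or an equivalent orthogonality argument) rather than the blanket appeal to Lemma~\ref{le2}.

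Everything else in your proposal is correct and matches the paper's route: the $j=1$ cases of (\ref{4-17-6}) and the whole of (\ref{m22-4}) are pure Gaussian moments from (\ref{m-02}); the $j=2$, $n\ge m$ case of (\ref{4-17-6}) is direct from (\ref{m19.02}); (\ref{m22-05}) for $n\le m$ via (\ref{i-as1}) is a legitimate application of Lemma~\ref{le2} because there the Gaussians in the bracket sum \emph{are} pairwise independent across $r$; and (\ref{m22-05}) for $n>m$ follows by the triangle inequality from (\ref{4-17-6}), exactly as the paper leaves implicit.
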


\begin{proof}
For simplicity, let $Y_{j}=w_{t_{n}^{k-1},t_{n}^{k}}^{(m),j}$ and $%
X_{j}=w_{t_{n}^{k-1},t_{n}^{k}}^{(m+1),j}-w_{t_{n}^{k-1},t_{n}^{k}}^{(m),j}$%
. For $j=1$, (\ref{4-17-6}) follows from the fact that%
\begin{equation}
\left\Vert Y_{1}\right\Vert _{q}=\left\{ 
\begin{array}{cc}
\frac{1}{\sqrt{2^{n}}}||\xi ||_{q} & \text{ \ for }n\leq m\text{ ,} \\ 
\frac{\sqrt{2^{m}}}{2^{n}}||\xi ||_{q} & \text{\ for }n>m\,%
\end{array}%
\right.  \label{m27-4}
\end{equation}%
where $\xi \sim N(0,1_{\mathbb{R}^{d}})$, and the fact that $||\xi
||_{q}\leq C\sqrt{q}$ for some constant $C$ depending only on $d$. For $j=2$
and $n>m$, (\ref{4-17-6}) follows from (\ref{m19.02}) directly. Consider the
case that $n<m$. According to (\ref{m19.01}), we need to estimate%
\begin{equation*}
I_{q}=\left\Vert \sum_{\substack{ r,s=2^{m-n}(k-1)+1  \\ r<s}}^{2^{m-n}k}\xi
_{m}^{r}\otimes \xi _{m}^{s}\right\Vert _{q}=\left\Vert
\sum_{s=2^{m-n}(k-1)+1}^{2^{m-n}k}\sum_{r=2^{m-n}(k-1)+1}^{s-1}\xi
_{m}^{r}\otimes \xi _{m}^{s}\right\Vert _{q}\text{.}
\end{equation*}%
Since $\xi _{m}^{r}\otimes \xi _{m}^{s}$ belong to the second chaos
component for whatever $m$, so that $I_{q}\leq C_{1}qI_{2}$ for some
constants $C_{1}$ and $C_{2}$ depending only on $d$, but independent of $m$
or $n$. Therefore we may assume that $q=2$. Furthermore, for simplicity, set 
$\eta _{s}=\sum_{r=2^{m-n}(k-1)+1}^{s-1}\xi _{m}^{r}$. Then $\xi _{s}$ has a
normal distribution with mean zero and 
\begin{equation*}
\text{var}(\eta _{s})=\frac{1}{2^{m}}(s-2^{m-n}(k-1))\text{.}
\end{equation*}%
Thus%
\begin{eqnarray*}
\left\Vert \sum_{s=2^{m-n}(k-1)+1}^{2^{m-n}k}\eta _{s}\otimes \xi
_{m}^{s}\right\Vert _{2}^{2} &=&\sum_{i,j=1}^{d}\mathbb{E}\left(
\sum_{s=2^{m-n}(k-1)+1}^{2^{m-n}k}\eta _{s}^{i}\xi _{m}^{s,j}\right) ^{2} \\
&=&\sum_{i,j=1}^{d}\sum_{s=2^{m-n}(k-1)+1}^{2^{m-n}k}\mathbb{E}\left( \eta
_{s}^{i}\xi _{m}^{s,j}\right) ^{2} \\
&&+2\sum_{i,j=1}^{d}\sum_{\substack{ s=2^{m-n}(k-1)+1  \\ s<r}}^{2^{m-n}k}%
\mathbb{E}\left( \eta _{s}^{i}\xi _{m}^{s,j}\eta _{r}^{i}\xi
_{m}^{r,j}\right) \text{.}
\end{eqnarray*}%
The last term has contribution zero. This is because for $r>s$, $\xi
_{m}^{r,j}$ is independent of $\eta _{s}^{i}\xi _{m}^{s,j}\eta _{r}^{i}$, so
that 
\begin{equation*}
\mathbb{E}\left( \eta _{s}^{i}\xi _{m}^{s,j}\eta _{r}^{i}\xi
_{m}^{r,j}\right) =\mathbb{E}\left( \eta _{s}^{i}\xi _{m}^{s,j}\eta
_{r}^{i}\right) \mathbb{E}\xi _{m}^{r,j}=0\text{ \ for }s<r\text{.}
\end{equation*}%
Hence 
\begin{eqnarray*}
\left\Vert \sum_{s=2^{m-n}(k-1)+1}^{2^{m-n}k}\eta _{s}\otimes \xi
_{m}^{s}\right\Vert _{2}^{2}
&=&\sum_{i,j=1}^{d}\sum_{s=2^{m-n}(k-1)+1}^{2^{m-n}k}\mathbb{E}\left( \eta
_{s}^{i}\xi _{m}^{s,j}\right) ^{2} \\
&=&\sum_{i,j=1}^{d}\sum_{s=2^{m-n}(k-1)+1}^{2^{m-n}k}\mathbb{E}\left( \xi
_{m}^{s,j}\right) ^{2}\mathbb{E}\left( \eta _{s}^{i}\right) ^{2} \\
&=&d^{2}\sum_{s=2^{m-n}(k-1)+1}^{2^{m-n}k}\frac{1}{2^{m}}(s-2^{m-n}(k-1))%
\frac{1}{2^{m}} \\
&=&\frac{1}{2^{2m}}d^{2}\sum_{s=1}^{2^{m-n}}s\leq C\frac{1}{2^{2n}}
\end{eqnarray*}%
and therefore%
\begin{equation*}
\left\Vert \sum_{\substack{ r,s=2^{m-n}(k-1)+1  \\ r<s}}^{2^{m-n}k}\xi
_{m}^{r}\otimes \xi _{m}^{s}\right\Vert _{q}\leq C_{2}q\frac{1}{2^{n}}
\end{equation*}%
for some constant $C$ depending only on $d$. By (\ref{m21-2}) \ and the
preceding estimate we have%
\begin{eqnarray*}
\left\Vert Y_{2}\right\Vert _{q} &\leq &\frac{1}{2}\left\Vert \xi
_{n}^{k}\otimes \xi _{n}^{k}\right\Vert _{q}+\frac{1}{2}\left\Vert \sum 
_{\substack{ r,s=2^{m-n}(k-1)+1  \\ r<s}}^{2^{m-n}k}[\xi _{m}^{r},\xi
_{m}^{s}]\right\Vert _{q} \\
&\leq &\frac{1}{2}\left\Vert \xi _{n}^{k}\otimes \xi _{n}^{k}\right\Vert
_{q}+\left\Vert \sum_{\substack{ r,s=2^{m-n}(k-1)+1  \\ r<s}}^{2^{m-n}k}\xi
_{m}^{r}\otimes \xi _{m}^{s}\right\Vert _{q} \\
&\leq &C\frac{q}{2^{n}}\text{.}
\end{eqnarray*}%
Remain to show (\ref{m22-05}) for the case $n\leq m$. Indeed, by (\ref{i-as1}%
) and (\ref{t-t-5})%
\begin{eqnarray*}
\left\Vert
w_{t_{n}^{k-1},t_{n}^{k}}^{(m+1),2}-w_{t_{n}^{k-1},t_{n}^{k}}^{(m),2}\right%
\Vert _{q} &=&\frac{1}{2}\left\Vert \sum_{r=2^{m-n}(k-1)+1}^{2^{m-n}k}[\xi
_{m+1}^{2r-1},\xi _{m+1}^{2r}]\right\Vert _{q} \\
&\leq &Cq\sqrt{\frac{1}{2^{m+n}}}
\end{eqnarray*}%
which completes the proof of the lemma.
\end{proof}

\begin{lemma}
\label{le6}There is a constant $C$ depending only on $d$ such that If $n\geq
m$, then $Y_{1}=\frac{2^{m}}{2^{n}}\xi _{m}^{k(n,m)}$, $DY_{1}=\frac{2^{m}}{%
2^{n}}\boldsymbol{1}_{J_{m}^{k(n,m)}}$If $n<m$, then $Y_{1}=\xi _{n}^{k}$ so
that $DY_{1}=\boldsymbol{1}_{J_{n}^{k}}$%
\begin{equation}
\left\vert Dw_{t_{n}^{k-1},t_{n}^{k}}^{(m),1}\right\vert _{\boldsymbol{H}%
}\leq \left\{ 
\begin{array}{cc}
C\sqrt{\frac{1}{2^{n}}} & \text{if }n\leq m\text{,} \\ 
C\frac{2^{m}}{2^{n}}\sqrt{\frac{1}{2^{m}}} & \text{if }n>m\text{,}%
\end{array}%
\right.  \label{4-21-1}
\end{equation}
\begin{equation}
\left\vert D\left(
w_{t_{n}^{k-1},t_{n}^{k}}^{(m+1),1}-w_{t_{n}^{k-1},t_{n}^{k}}^{(m),1}\right)
\right\vert _{\boldsymbol{H}}\leq \left\{ 
\begin{array}{cc}
0\text{ \ \ \ \ \ \ \ \ \ \ \ \ \ \ \ \ } & \text{if }n\leq m\text{,} \\ 
C\sqrt{\frac{2^{m}}{2^{n}}}\sqrt{\frac{1}{2^{n}}} & \text{if }n>m%
\end{array}%
\right.  \label{m23-1}
\end{equation}%
and 
\begin{equation}
\left\Vert D^{a}\left(
w_{t_{n}^{k-1},t_{n}^{k}}^{(m+1),2}-w_{t_{n}^{k-1},t_{n}^{k}}^{(m),2}\right)
\right\Vert _{q}\leq \left\{ 
\begin{array}{cc}
C\sqrt{q}^{2-a}\sqrt{\frac{1}{2^{m+n}}}\text{ \ \ \ \ \ \ } & \text{\ if }%
n\leq m\text{,} \\ 
C\sqrt{q}^{2-a}\sqrt{\frac{2^{3m}}{2^{3n}}}\sqrt{\frac{1}{2^{n+m}}} & \text{%
if }n>m%
\end{array}%
\right. \text{ \ }  \label{m22-6}
\end{equation}%
for any $m,n\in \mathbb{N}$, and $k=1,\cdots ,2^{n}$, $a=1,2$.
\end{lemma}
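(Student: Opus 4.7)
The approach is to differentiate the explicit formulas (\ref{m-02}), (\ref{m19.01}), (\ref{m19.02}), (\ref{i-as1}) for the polygonal iterated integrals directly, using $D\xi_n^k=\boldsymbol{1}_{J_n^k}$ (whose $\boldsymbol{H}$-norm is $\sqrt{d/2^n}$ by (\ref{4-17-1})) and the Leibniz rule for tensor products. Since each $w_{t_n^{k-1},t_n^k}^{(m),j}$ is a polynomial of degree $j\leq 2$ in i.i.d.\ Gaussian increments, every Malliavin derivative is a finite sum of tensor products in which at least one factor is a deterministic indicator of $\boldsymbol{H}$-norm $\sqrt{d/2^{\ell}}$ on a sub-interval of $J_n^k$. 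The $L^q$ estimates then reduce to orthogonality in $\boldsymbol{H}$ together with, for $a=1$, one application of Gaussian hypercontractivity.

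For the first-level bounds (\ref{4-21-1}) and (\ref{m23-1}): differentiating (\ref{m-02}) gives the stated $\boldsymbol{H}$-norms directly. The vanishing part of (\ref{m23-1}) for $n\leq m$ holds because both polygonal paths pass through the Brownian sample at every level-$n$ dyadic time, so the two increments over $[t_n^{k-1},t_n^k]$ coincide. For $n>m$, use the subdivision $\xi_m^{k(n,m)}=\xi_{m+1}^{2k(n,m)-1}+\xi_{m+1}^{2k(n,m)}$ together with $k(n,m+1)\in\{2k(n,m)-1,2k(n,m)\}$ to rewrite the difference as $\pm(2^m/2^n)(\xi_{m+1}^{2k(n,m)-1}-\xi_{m+1}^{2k(n,m)})$; its derivative is a combination of two indicators on disjoint sub-intervals, with $\boldsymbol{H}$-norm $(2^m/2^n)\sqrt{2d/2^{m+1}}=C\sqrt{2^m/2^{2n}}$.

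For the second-level bounds (\ref{m22-6}) with $n\leq m$: start from (\ref{i-as1}), which by direct check holds also at $n=m$, and expand $D^{a}[\xi_{m+1}^{2r-1},\xi_{m+1}^{2r}]$ via Leibniz. At $a=1$ this produces, for each of the $2^{m-n}$ values of $r$, four tensor products of the form $\boldsymbol{1}_J\otimes\xi$ or $\xi\otimes\boldsymbol{1}_J$ with $|\boldsymbol{1}_J|_{\boldsymbol{H}}=\sqrt{d/2^{m+1}}$ and $\xi\sim N(0,(d/2^{m+1})I_{\mathbb{R}^d})$ on disjoint intervals. The resulting sum lies in the first Wiener chaos in the $\xi_{m+1}^{\cdot}$'s, so hypercontractivity bounds its $L^q$-norm by $C\sqrt{q}$ times its $L^2$-norm; the latter splits orthogonally across $r$ (indicator supports disjoint, Gaussians independent) to $\sqrt{2^{m-n}}\cdot O(1/2^{m+1})=O(1/\sqrt{2^{m+n}})$. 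At $a=2$, one further differentiation removes every Gaussian and leaves a deterministic sum of tensor products of disjoint indicator pairs, again mutually orthogonal, giving the same bound without the $\sqrt{q}$. In the case $n>m$, apply (\ref{m19.02}) at levels $m$ and $m+1$ together with the same subdivision $\xi_m=\xi_{m+1}^{\cdot}+\xi_{m+1}^{\cdot}$ to express the difference as a quadratic polynomial in two $\xi_{m+1}^{\cdot}$; each Leibniz term has $\boldsymbol{H}^{\otimes a}\otimes(\mathbb{R}^d)^{\otimes 2}$-norm at most $C\sqrt{q}^{\,2-a}/2^m$, and the prefactor $2^{2m}/2^{2n}$ yields the claimed $2^{m-2n}$ decay.

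The main obstacle is bookkeeping: the Leibniz expansion of the commutator bracket and of tensor squares produces several terms whose factor orders in $\boldsymbol{H}^{\otimes a}\otimes(\mathbb{R}^d)^{\otimes 2}$ must be tracked carefully, and the regime $n>m$ lacks the clean cancellation identity (\ref{i-as1}), forcing us to expand each side of the difference separately. Once the bookkeeping is in place, every bound follows from (\ref{4-17-1}) plus at most one hypercontractivity step, in close parallel with the proof of Lemma \ref{lem1a}.
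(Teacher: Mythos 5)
Your proposal follows essentially the same route as the paper: differentiate the explicit formulas (\ref{m-02}), (\ref{i-as1}), (\ref{m19.02}), use the orthogonality of indicators on disjoint dyadic intervals to compute or bound the $\boldsymbol{H}$- and $\boldsymbol{H}^{\otimes 2}$-norms, and then pass to $L^q$ via Gaussian moment growth. The one cosmetic deviation is that for $n>m$ you pre-subdivide $\xi_m^{k(n,m)}$ into two level-$(m+1)$ increments so everything lives at scale $m+1$, whereas the paper keeps the overlapping indicators $\boldsymbol{1}_{J_{m+1}^{k(n,m+1)}}$ and $\boldsymbol{1}_{J_m^{k(n,m)}}$ and evaluates the norm of the difference directly; both give the same bounds, so this is an equivalent bookkeeping choice rather than a different method.
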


\begin{proof}
(\ref{4-21-1}) is easy to see. Let $%
X_{j}=w_{t_{n}^{k-1},t_{n}^{k}}^{(m+1),j}-w_{t_{n}^{k-1},t_{n}^{k}}^{(m),j}$
for simplicity. If $n\leq m$ then $X_{1}=0$, and if $n\geq m$, then%
\begin{equation*}
DX_{1}=\frac{2^{m+1}}{2^{n}}\boldsymbol{1}_{J_{m+1}^{k(n,m+1)}}-\frac{2^{m}}{%
2^{n}}\boldsymbol{1}_{J_{m}^{k(n,m)}}
\end{equation*}%
so that%
\begin{equation*}
\left\vert DX_{1}\right\vert _{\boldsymbol{H}}=C\sqrt{\frac{2^{m}}{2^{n}}}%
\sqrt{\frac{1}{2^{n}}}
\end{equation*}%
where $C$ is a constant depending only on $d$. Next we consider the L\'{e}vy
area $X_{2}$. If $n<m$ 
\begin{equation*}
X_{2}=\frac{1}{2}\sum_{l=2^{m-n}(k-1)+1}^{2^{m-n}k}[\xi _{m+1}^{2l-1},\xi
_{m+1}^{2l}]\text{.}
\end{equation*}%
so that%
\begin{equation*}
DX_{2}=\frac{1}{2}\sum_{l=2^{m-n}(k-1)+1}^{2^{m-n}k}\left( [\boldsymbol{1}%
_{J_{m+1}^{2l-1}},\xi _{m+1}^{2l}]+[\xi _{m+1}^{2l-1},\boldsymbol{1}%
_{J_{m+1}^{2l}}]\right) \text{ }
\end{equation*}%
and%
\begin{equation*}
D^{2}X_{2}=\frac{1}{2}\sum_{l=2^{m-n}(k-1)+1}^{2^{m-n}k}\left( [\boldsymbol{1%
}_{J_{m+1}^{2l-1}},\boldsymbol{1}_{J_{m+1}^{2l}}]-[\boldsymbol{1}%
_{J_{m+1}^{2l}},\boldsymbol{1}_{J_{m+1}^{2l-1}}]\right)
\end{equation*}%
where 
\begin{equation*}
\lbrack \boldsymbol{1}_{J_{m+1}^{2l-1}},\boldsymbol{1}%
_{J_{m+1}^{2l}}](t_{1},t_{2})=[\boldsymbol{1}_{J_{m+1}^{2l-1}}(t_{1}),%
\boldsymbol{1}_{J_{m+1}^{2l}}(t_{2})]\text{.}
\end{equation*}%
Since the intervals $J_{m+1}^{i}$ are disjoint, so that%
\begin{eqnarray*}
|DX_{2}|_{\boldsymbol{H}}^{2} &=&\int_{0}^{\infty }\frac{1}{4}%
\sum_{i,j}\left( \sum_{l=2^{m-n}(k-1)+1}^{2^{m-n}k}\left(
[1_{J_{m+1}^{2l-1}},\xi _{m+1}^{2l}]^{ij}+[\xi
_{m+1}^{2l-1},1_{J_{m+1}^{2l}}]^{ij}\right) \right) ^{2} \\
&=&\int_{0}^{\infty }\frac{1}{4}\sum_{i,j}\sum_{l=2^{m-n}(k-1)+1}^{2^{m-n}k}%
\left( [1_{J_{m+1}^{2l-1}},\xi _{m+1}^{2l}]^{ij}+[\xi
_{m+1}^{2l-1},1_{J_{m+1}^{2l}}]^{ij}\right) ^{2} \\
&\leq &\frac{1}{2}\sum_{l=2^{m-n}(k-1)+1}^{2^{m-n}k}\int_{0}^{\infty }\left(
|[1_{J_{m+1}^{2l-1}},\xi _{m+1}^{2l}]|^{2}+|[\xi
_{m+1}^{2l-1},1_{J_{m+1}^{2l}}]|^{2}\right) \\
&\leq &\frac{1}{2^{m+1}}\sum_{l=2^{m-n}(k-1)+1}^{2^{m-n}k}\left( |\xi
_{m+1}^{2l}|^{2}+|\xi _{m+1}^{2l-1}|^{2}\right)
\end{eqnarray*}%
that is%
\begin{equation}
|DX_{2}|_{\boldsymbol{H}}\leq \sqrt{\frac{1}{2^{m+1}}}\sqrt{%
\sum_{l=2^{m-n}(k-1)+1}^{2^{m-n}k}\left( |\xi _{m+1}^{2l}|^{2}+|\xi
_{m+1}^{2l-1}|^{2}\right) }\text{. }  \label{4-20-1}
\end{equation}%
Similarly,%
\begin{eqnarray*}
|D^{2}X_{2}|_{\boldsymbol{H}^{\otimes 2}}^{2} &=&\int_{0}^{\infty
}\int_{0}^{\infty }\frac{1}{4}\sum_{i,j}\left(
\sum_{l=2^{m-n}(k-1)+1}^{2^{m-n}k}\left(
[1_{J_{m+1}^{2l-1}},1_{J_{m+1}^{2l}}]-[1_{J_{m+1}^{2l}},1_{J_{m+1}^{2l-1}}]%
\right) ^{ij}\right) ^{2} \\
&=&\int_{0}^{\infty }\int_{0}^{\infty }\frac{1}{4}%
\sum_{l=2^{m-n}(k-1)+1}^{2^{m-n}k}%
\sum_{i,j}|[1_{J_{m+1}^{2l-1}},1_{J_{m+1}^{2l}}]-[1_{J_{m+1}^{2l}},1_{J_{m+1}^{2l-1}}]|^{2}
\\
&\leq &\int_{0}^{\infty }\int_{0}^{\infty
}\sum_{l=2^{m-n}(k-1)+1}^{2^{m-n}k}|[1_{J_{m+1}^{2l-1}},1_{J_{m+1}^{2l}}]| \\
&=&C2^{m-n}\frac{1}{2^{2(m+1)}}
\end{eqnarray*}%
where $C$ depends only on $d$, so that 
\begin{equation}
|D^{2}X_{2}|_{\boldsymbol{H}^{\otimes 2}}\leq C\sqrt{\frac{1}{2^{m+n}}}\text{
.}  \label{4-20-2}
\end{equation}%
Hence, for $q\geq 2$ we have%
\begin{eqnarray*}
\left\Vert DX_{2}\right\Vert _{q} &\leq &\sqrt{\frac{1}{2^{m+1}}}\sqrt{%
\left\Vert \sum_{l=2^{m-n}(k-1)+1}^{2^{m-n}k}\left( |\xi
_{m+1}^{2l}|^{2}+|\xi _{m+1}^{2l-1}|^{2}\right) \right\Vert _{\frac{q}{2}}}
\\
&\leq &\text{ }\sqrt{\frac{1}{2^{m+1}}}\sqrt{%
\sum_{l=2^{m-n}(k-1)+1}^{2^{m-n}k}\left( ||\xi _{m+1}^{2l}||_{q}^{2}+||\xi
_{m+1}^{2l-1}||_{q}^{2}\right) } \\
&\leq &\text{ }C\sqrt{q}\sqrt{\frac{1}{2^{m+1}}}\sqrt{2^{m-n}\frac{1}{2^{m}}}%
\leq C\sqrt{q}\sqrt{\frac{1}{2^{m+n}}}
\end{eqnarray*}%
and%
\begin{equation}
||D^{2}X_{2}||_{q}\leq C\frac{1}{\sqrt{2^{m+n}}}\text{ \ \ \ \ }\forall
n\leq m\text{.}  \label{4-20-3}
\end{equation}

If $n\geq m$ then%
\begin{eqnarray*}
DX_{2} &=&\frac{1}{2}\frac{2^{2m}}{2^{2n}}\left( 4\{\xi _{m+1}^{k(n,m+1)},%
\boldsymbol{1}_{J_{m+1}^{k(n,m+1)}}\}-\{\xi _{m}^{k(n,m)},\boldsymbol{1}%
_{J_{m}^{k(n,m)}}\}\right) \text{,} \\
D^{2}X_{2} &=&\frac{1}{2}\frac{2^{2m}}{2^{2n}}\left( 4\{\boldsymbol{1}%
_{J_{m+1}^{k(n,m+1)}},\boldsymbol{1}_{J_{m+1}^{k(n,m+1)}}\}-\{\boldsymbol{1}%
_{J_{m}^{k(n,m)}},\boldsymbol{1}_{J_{m}^{k(n,m)}}\}\right)
\end{eqnarray*}%
so that%
\begin{equation*}
|DX_{2}|_{\boldsymbol{H}}\leq C\sqrt{\frac{2^{3m}}{2^{3n}}}\sqrt{\frac{1}{%
2^{n}}}\left( |\xi _{m+1}^{k(n,m+1)}|+|\xi _{m}^{k(n,m)}|\right) \text{, \ }%
|D^{2}X_{2}|_{\boldsymbol{H}}\leq C\frac{2^{2m}}{2^{2n}}\frac{1}{2^{m}}
\end{equation*}%
for a constant $C$ depending only on $d$. After taking $q$-norm, we obtain (%
\ref{m22-6}).
\end{proof}

\begin{lemma}
\label{le7}Let $\tilde{N}>0$. Consider the following functionals $%
f_{m,n,k}^{j}$ on the Wiener space $\boldsymbol{W}$ defined by%
\begin{equation}
f_{m,n,k}^{j}(w)=\left\vert
w_{t_{n}^{k-1},t_{n}^{k}}^{(m+1),j}-w_{t_{n}^{k-1},t_{n}^{k}}^{(m),j}\right%
\vert ^{2\tilde{N}}\text{ \ \ \ \ for }w\in \boldsymbol{W}\text{.}
\label{4-21-3}
\end{equation}%
If $2\tilde{N}>1$ then there is a constant $C$ depending only on $d$ and $%
\tilde{N}$ such that%
\begin{equation}
\left\Vert Df_{m,n,k}^{1}\right\Vert _{q}\leq Cq^{\tilde{N}}\left( \frac{%
2^{m}}{2^{2n}}\right) ^{\tilde{N}}\text{ \ \ for }n>m  \label{m23-6}
\end{equation}%
and 
\begin{equation}
\left\Vert Df_{m,n,k}^{2}\right\Vert _{q}\leq \left\{ 
\begin{array}{cc}
C\sqrt{q}^{4\tilde{N}}\left( \frac{1}{2^{m+n}}\right) ^{\tilde{N}} & \text{\
if }n\leq m\text{,} \\ 
C\sqrt{q}^{4\tilde{N}}\left( \frac{2^{m}}{2^{2n}}\text{ }\right) ^{2\tilde{N}%
} & \text{if }n>m\text{ .}%
\end{array}%
\right.  \label{m23-5}
\end{equation}%
for any $n,m$ and $k=1,\cdots ,2^{n}$.
\end{lemma}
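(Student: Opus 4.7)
The strategy is to apply the chain rule for the Malliavin derivative to the composition $\phi(x)=|x|^{2\tilde N}$. Since $2\tilde N>1$, the map $\phi$ is of class $C^{1}$ on the relevant tensor space with gradient $\nabla\phi(x)=2\tilde N\,|x|^{2\tilde N-2}x$, so writing
\begin{equation*}
X_{j}:=w_{t_{n}^{k-1},t_{n}^{k}}^{(m+1),j}-w_{t_{n}^{k-1},t_{n}^{k}}^{(m),j},
\end{equation*}
the chain rule gives $Df^{j}_{m,n,k}=2\tilde N\,|X_{j}|^{2\tilde N-2}\,\langle X_{j},DX_{j}\rangle$, where the pairing contracts the tensor part of $X_{j}$ with that of $DX_{j}$ and leaves the Cameron--Martin factor free. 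A pointwise Cauchy--Schwarz inside the tensor spaces then yields
\begin{equation*}
\bigl|Df^{j}_{m,n,k}\bigr|_{\boldsymbol{H}}\;\leq\;2\tilde N\,|X_{j}|^{2\tilde N-1}\,|DX_{j}|_{\boldsymbol{H}}.
\end{equation*}

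Next I would take $L^{q}$-norms and apply H\"older with exponents $2q,2q$, obtaining
\begin{equation*}
\|Df^{j}_{m,n,k}\|_{q}\;\leq\;2\tilde N\,\|X_{j}\|_{2q(2\tilde N-1)}^{\,2\tilde N-1}\;\bigl\|\,|DX_{j}|_{\boldsymbol{H}}\,\bigr\|_{2q}.
\end{equation*}
The two factors on the right are exactly what the preceding lemmas control: for any exponent $r\ge 1$, $\|X_{j}\|_{r}$ is bounded via Lemma \ref{lem1a} by $\sqrt{r}^{\,j}$ times an explicit scale depending on whether $n\le m$ or $n>m$, and $\bigl\|\,|DX_{j}|_{\boldsymbol{H}}\,\bigr\|_{r}$ is bounded via Lemma \ref{le6} (deterministic for $j=1$, and of order $\sqrt{r}$ times the same scale for $j=2$). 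Substituting these bounds, with a case split $n\le m$ vs.\ $n>m$, and multiplying gives (\ref{m23-6}) and (\ref{m23-5}); note that for $j=1$ and $n\le m$ there is nothing to prove because $X_{1}\equiv 0$.

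The only obstacle I anticipate is bookkeeping: after substitution the natural powers of $q$ are $q^{(2\tilde N-1)/2}$ for $j=1$ and $q^{2\tilde N-1/2}$ for $j=2$, which must be absorbed into the stated $q^{\tilde N}$ and $\sqrt{q}^{\,4\tilde N}=q^{2\tilde N}$ respectively (trivially legitimate since $q\ge 1$). Conceptually the only non-automatic step is the applicability of the Malliavin chain rule to $x\mapsto|x|^{2\tilde N}$, which is valid precisely because the hypothesis $2\tilde N>1$ makes this function continuously differentiable across the origin, so no regularization at $X_{j}=0$ is required.
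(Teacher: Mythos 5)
Your proof follows the same route as the paper's: chain rule to get $Df^{j}_{m,n,k}=2\tilde N\,|X_{j}|^{2\tilde N-2}\langle X_{j},DX_{j}\rangle$, the pointwise Cauchy--Schwarz bound $|Df^{j}_{m,n,k}|_{\boldsymbol H}\le 2\tilde N|X_{j}|^{2\tilde N-1}|DX_{j}|_{\boldsymbol H}$, H\"older to split the $L^{q}$-norm into $\|X_{j}\|_{\cdot}^{2\tilde N-1}\cdot\||DX_{j}|_{\boldsymbol H}\|_{\cdot}$, and then the moment bounds of Lemmas~\ref{lem1a} and~\ref{le6}, absorbing the sub-optimal powers of $q$ since $q\ge1$. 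The only difference from the paper is that you fix the H\"older exponents as $(2q,2q)$, whereas the paper picks $\alpha>1$ with $\beta=\alpha q(2\tilde N-1)>1$ precisely so that the intermediate index on $X_{j}$ is $\ge1$ and Lemma~\ref{lem1a} applies directly; with your fixed choice, when $1/2<\tilde N<3/4$ and $q$ is close to $1$ (relevant later, since one takes $2\tilde N=p/2$ with $p\in(2,3)$) one has $2q(2\tilde N-1)<1$, so you would need the extra one-line observation $\|X_{j}\|_{r}\le\|X_{j}\|_{1}$ for $r\le1$ (or simply adopt the paper's adjustable $\alpha$) to close that corner.
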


\begin{proof}
Let $%
X_{j}=w_{t_{n}^{k-1},t_{n}^{k}}^{(m+1),j}-w_{t_{n}^{k-1},t_{n}^{k}}^{(m),j}$
\ ($j=1,2$) as in the proof of the previous lemma. Since $2\tilde{N}>1$ so
that $f_{m,n,k}^{j}$ are differentiable in the sense of Malliavin calculus,
and by chain rule 
\begin{equation*}
Df_{m,n,k}^{j}=2\tilde{N}\left\vert X_{j}\right\vert ^{2(\tilde{N}%
-1)}\langle X_{j},DX_{j}\rangle
\end{equation*}%
so that%
\begin{equation*}
\left\vert Df_{m,n,k}^{j}\right\vert _{\boldsymbol{H}}\leq 2\tilde{N}%
\left\vert X_{j}\right\vert ^{2\tilde{N}-1}\left\vert DX_{j}\right\vert _{%
\boldsymbol{H}}\text{ .}
\end{equation*}%
Thus, choose $\alpha >1$ such that $\beta =\alpha q(2\tilde{N}-1)>1$. Then,
by using H\"{o}lder's inequality%
\begin{equation}
\left\Vert Df_{m,n,k}^{j}\right\Vert _{q}\leq 2\tilde{N}\left\Vert
X_{j}\right\Vert _{\beta }^{2\tilde{N}-1}\left\Vert DX_{j}\right\Vert
_{q\alpha ^{\prime }}  \label{m23-3}
\end{equation}%
where $\frac{1}{\alpha }+\frac{1}{\alpha ^{\prime }}=1$. Using (\ref{m22-4})
and (\ref{m23-1}) to obtain%
\begin{equation*}
\left\Vert Df_{m,n,k}^{1}\right\Vert _{q}\leq C\left( \sqrt{q}\sqrt{\frac{%
2^{m}}{2^{n}}}\sqrt{\frac{1}{2^{n}}}\right) ^{2\tilde{N}}
\end{equation*}%
and%
\begin{equation*}
\left\Vert Df_{m,n,k}^{2}\right\Vert _{q}\leq \left\{ 
\begin{array}{cc}
C\left( \sqrt{q}^{2}\sqrt{\frac{1}{2^{m+n}}}\right) ^{2\tilde{N}-1}\left( 
\sqrt{q}\sqrt{\frac{1}{2^{m+n}}}\right) & \text{\ if }n\leq m\text{,} \\ 
C\left( \sqrt{q}^{2}\frac{2^{m}}{2^{2n}}\text{ }\right) ^{2\tilde{N}%
-1}\left( \sqrt{q}\sqrt{\frac{2^{3m}}{2^{3n}}}\sqrt{\frac{1}{2^{n+m}}}\right)
& \text{if }n>m%
\end{array}%
\right.
\end{equation*}%
thus (\ref{m23-6}) and (\ref{m23-5}) follow immediately.
\end{proof}

\subsection{Estimating capacities}

\begin{lemma}
\label{le3}There is a constant $C$ depending only on $p,\gamma $ and $d$
such that%
\begin{equation}
\left\Vert \rho _{j}(\boldsymbol{w}^{(m)})^{\frac{p}{j}}\right\Vert _{q}\leq
Cq^{\frac{p}{2}}\text{ \ \ \ \ \ }\forall m\in \mathbb{N}\text{, }q\geq 1
\label{m27-5}
\end{equation}%
where $j=1,2$.
\end{lemma}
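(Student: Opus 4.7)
The plan is to expand $\rho_j(\boldsymbol{w}^{(m)})^{p/j}$ via its defining series, bring the $L^q$-norm inside using Minkowski's inequality, rewrite moments of $|X|^{p/j}$ as powers of the $L^{qp/j}$-norm of $X$, and then apply the moment bounds from Lemma \ref{lem1a} with the index rescaled. Concretely, I would write
\begin{equation*}
\left\Vert \rho_j(\boldsymbol{w}^{(m)})^{p/j}\right\Vert_q
  \leq \sum_{n=1}^{\infty} n^{\gamma} \sum_{k=1}^{2^n}
       \left\Vert |w_{t_n^{k-1},t_n^k}^{(m),j}|^{p/j}\right\Vert_q
  = \sum_{n=1}^{\infty} n^{\gamma} \sum_{k=1}^{2^n}
       \left\Vert w_{t_n^{k-1},t_n^k}^{(m),j}\right\Vert_{qp/j}^{p/j}.
\end{equation*}

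Next I would split the outer sum according to whether $n<m$ or $n\geq m$ and plug in the two branches of the estimate (\ref{4-17-6}) from Lemma \ref{lem1a}, replacing $q$ by $qp/j$ (using $qp/j\geq 1$, which is automatic since $p>2$ and $q\geq 1$). For the low-frequency range $n<m$, the bound gives a term of order $q^{p/2} 2^{-np/2}$; combined with the $2^n$ partition terms and the mild $n^{\gamma}$ weight, the $n$-sum is controlled by the convergent geometric-type series
\begin{equation*}
q^{p/2}\sum_{n=1}^{m-1} n^{\gamma}\, 2^{n(1-p/2)},
\end{equation*}
which is bounded since $p>2$ forces $1-p/2<0$. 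For the high-frequency range $n\geq m$, the bound becomes $q^{p/2}\,2^{mp/2}\,2^{-np}$, and after summing $2^n$ terms and weighting by $n^{\gamma}$ we obtain
\begin{equation*}
q^{p/2}\,2^{mp/2}\sum_{n=m}^{\infty} n^{\gamma}\, 2^{n(1-p)},
\end{equation*}
whose leading behavior is $q^{p/2}\,m^{\gamma}\,2^{-m(p/2-1)}$. Since $p/2>1$, this quantity is uniformly bounded in $m$.

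Combining the two ranges yields a constant depending only on $p,\gamma,d$ (absorbing the powers of $p/j$ for $j\in\{1,2\}$), giving the desired estimate $\|\rho_j(\boldsymbol{w}^{(m)})^{p/j}\|_q \leq C q^{p/2}$ uniformly in $m$. I expect no serious obstacle here: every piece is a direct consequence of Minkowski's inequality and the Gaussian moment bounds encoded in Lemma \ref{lem1a}, and the role of the hypothesis $\gamma>p/2-1$ (though not strictly needed for this particular lemma since the weight $n^{\gamma}$ is tamed by the exponentially decaying $2^{n(1-p/2)}$) merely underlines that the choice of weight is consistent with the rest of the paper. The only minor care needed is to ensure that the exponent $qp/j$ in Lemma \ref{lem1a} is $\geq 1$, which is immediate.
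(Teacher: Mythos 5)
Your proof is correct and follows exactly the route the paper takes: triangle (Minkowski) inequality to pass the $L^q$ norm inside the defining series, identification $\bigl\Vert |X|^{p/j}\bigr\Vert_q = \Vert X\Vert_{qp/j}^{p/j}$, and then the two-regime moment bounds of Lemma \ref{lem1a}, with the geometric decay $2^{n(1-p/2)}$ (respectively $2^{n(1-p)}$) absorbing the $2^n$ factor and the $n^{\gamma}$ weight. The paper's proof is just a terser version of the same argument.
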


\begin{proof}
(\ref{m27-5}) follows from the following inequality%
\begin{equation*}
\left\Vert \rho _{j}(\boldsymbol{w}^{(m)})^{\frac{p}{j}}\right\Vert _{q}\leq
\sum_{n=1}^{\infty }n^{\gamma }\sum_{k=1}^{2^{n}}\left\Vert
w_{t_{n}^{k-1},t_{n}^{k}}^{(m),j}\right\Vert _{pq}^{p}
\end{equation*}%
and Lemma \ref{lem1a} applying to the $L^{pq}$-norm.
\end{proof}

\begin{lemma}
\label{le4}There is a constant $C$ depending only on $p$, $\gamma $ and $d$
such that%
\begin{equation}
\left\Vert \rho _{j}(\boldsymbol{w}^{(m+1)},\boldsymbol{w}^{(m)})^{\frac{p}{j%
}}\right\Vert _{q}\leq Cq^{\frac{p}{2}}\left( \frac{1}{2^{m}}\right) ^{\frac{%
p-2}{4}}  \label{m27-7}
\end{equation}%
for all $m\in \mathbb{N}$, and $j=1,2$ and $q>1$.
\end{lemma}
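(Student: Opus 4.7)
The plan is to bound $\Vert \rho_j(\boldsymbol{w}^{(m+1)},\boldsymbol{w}^{(m)})^{p/j}\Vert_q$ termwise via Minkowski's inequality and then insert the $L^{pq/j}$-moment bounds on the dyadic increments already established in Lemma \ref{lem1a}. Writing $X_{n,k}^{j}=w_{t_{n}^{k-1},t_{n}^{k}}^{(m+1),j}-w_{t_{n}^{k-1},t_{n}^{k}}^{(m),j}$, the definition \eqref{d-s1} and Minkowski give
\begin{equation*}
\left\Vert \rho _{j}(\boldsymbol{w}^{(m+1)},\boldsymbol{w}^{(m)})^{p/j}\right\Vert _{q}\leq \sum_{n=1}^{\infty }n^{\gamma }\sum_{k=1}^{2^{n}}\left\Vert X_{n,k}^{j}\right\Vert _{pq/j}^{p/j}\text{.}
\end{equation*}
So the only task is to sum up the bounds from \eqref{m22-4} and \eqref{m22-05}, applied with parameter $pq/j$ in place of $q$, over the two regimes $n\leq m$ and $n>m$.

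For $j=1$, the bound \eqref{m22-4} is zero when $n\leq m$, and for $n>m$ gives $\Vert X_{n,k}^{1}\Vert_{pq}^{p}\leq C\,q^{p/2}\,2^{mp/2}\,2^{-np}$. Multiplying by $2^{n}$ from the sum over $k$ and $n^{\gamma }$, the tail sum $\sum_{n>m}n^{\gamma }2^{-n(p-1)}$ converges (since $p>2$) and is of order $m^{\gamma }2^{-m(p-1)}$. Combining gives
\begin{equation*}
\Vert \rho_{1}(\boldsymbol{w}^{(m+1)},\boldsymbol{w}^{(m)})^{p}\Vert _{q}\leq Cq^{p/2}\,m^{\gamma }\,2^{-m(p-2)/2}\text{.}
\end{equation*}
For $j=2$ I split into two pieces. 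When $n\leq m$, inserting $\Vert X_{n,k}^{2}\Vert_{pq/2}^{p/2}\leq Cq^{p/2}\,2^{-(m+n)p/4}$ and summing $2^{n}\cdot n^{\gamma}\cdot 2^{-(m+n)p/4}=n^{\gamma}\,2^{-mp/4}\,2^{n(1-p/4)}$ over $n\leq m$; since $p<3$ we have $1-p/4>0$, so the geometric factor $2^{n(1-p/4)}$ is dominated by its largest term $\sim m^{\gamma }2^{m(1-p/4)}$, producing $Cq^{p/2}\,m^{\gamma }2^{-m(p-2)/2}$. When $n>m$, the estimate $\Vert X_{n,k}^{2}\Vert_{pq/2}^{p/2}\leq Cq^{p/2}\,2^{mp/2}\,2^{-np}$ is identical in form to the $j=1$ case and yields the same bound $Cq^{p/2}m^{\gamma}2^{-m(p-2)/2}$.

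Summing the two contributions, both $j=1$ and $j=2$ satisfy
\begin{equation*}
\left\Vert \rho _{j}(\boldsymbol{w}^{(m+1)},\boldsymbol{w}^{(m)})^{p/j}\right\Vert _{q}\leq Cq^{p/2}\,m^{\gamma }\,2^{-m(p-2)/2}\text{.}
\end{equation*}
Since $p>2$, the polynomial factor $m^{\gamma }$ is absorbed by halving the exponential rate: $m^{\gamma }2^{-m(p-2)/2}\leq C_{\gamma ,p}\,2^{-m(p-2)/4}$ for all $m\geq 1$. This recovers \eqref{m27-7}.

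The only point requiring care is the case $j=2,\,n\leq m$, where the geometric series in $n$ is \emph{increasing} in $n$ (because $1-p/4>0$ for $p<3$), so the dominant contribution comes from the endpoint $n=m$; this is where the balance of scales between the two regimes is achieved, and it is the key place where the range $p\in(2,3)$ is used. The final trade of a power of $m$ against half the decay exponent is a harmless bookkeeping step that gives a cleaner rate $(p-2)/4$ rather than the sharper $(p-2)/2$ one could have written.
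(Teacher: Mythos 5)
Your proposal is correct and follows essentially the same route as the paper: both proofs apply the triangle (Minkowski) inequality to reduce $\Vert\rho_j^{p/j}\Vert_q$ to a sum of $L^{pq/j}$-moments of the dyadic increments, plug in the bounds from Lemma \ref{lem1a}, observe that the geometric series in the $j=2,\ n\le m$ regime is increasing with endpoint $n=m$ giving the balance between scales, and finally trade the residual $m^\gamma$ polynomial for half the decay exponent to obtain the stated rate $(1/2^m)^{(p-2)/4}$. The only cosmetic difference is that you keep the sharper intermediate bound $m^\gamma 2^{-m(p-2)/2}$ explicit before giving up half the exponent at the end, whereas the paper absorbs that step inside the summation estimates.
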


\begin{proof}
thus, together with (\ref{4-17-6}), there is a constant depending only on $d$
such that Therefore, by applying these estimates to $L^{pq}$ norms we deduce
that%
\begin{eqnarray*}
\left\Vert \rho _{1}(\boldsymbol{w}^{(m+1)},\boldsymbol{w}%
^{(m)})^{p}\right\Vert _{q} &\leq &C\sum_{n>m}^{\infty }n^{\gamma
}\sum_{k=1}^{2^{n}}\left\Vert
w_{t_{n}^{k-1},t_{n}^{k}}^{(m+1),1}-w_{t_{n}^{k-1},t_{n}^{k}}^{(m),1}\right%
\Vert _{qp}^{p} \\
&\leq &C\sum_{n>m}^{\infty }n^{\gamma }2^{n}\left( \sqrt{q}\frac{\sqrt{2^{m}}%
}{2^{n}}\right) ^{p} \\
&\leq &Cq^{\frac{p}{2}}\sum_{n>m}^{\infty }n^{\gamma }\left( \frac{1}{2^{n}}%
\right) ^{\frac{p-2}{2}} \\
&\leq &Cq^{\frac{p}{2}}\left( \frac{1}{2^{m}}\right) ^{\frac{p-2}{4}}\text{.}
\end{eqnarray*}%
Similarly%
\begin{eqnarray*}
\left\Vert \rho _{2}(\boldsymbol{w}^{(m+1)},\boldsymbol{w}^{(m)})^{\frac{p}{2%
}}\right\Vert _{q} &\leq &\sum_{n=1}^{\infty }n^{\gamma
}\sum_{k=1}^{2^{n}}\left\Vert
w_{t_{n}^{k-1},t_{n}^{k}}^{(m+1),2}-w_{t_{n}^{k-1},t_{n}^{k}}^{(m),j}\right%
\Vert _{q\frac{p}{2}}^{\frac{p}{2}} \\
&\leq &Cq^{\frac{p}{2}}\left( \sum_{n\leq m}^{\infty }n^{\gamma }2^{n}\sqrt{%
\frac{1}{2^{m+n}}}^{\frac{p}{2}}+\sum_{n>m}^{\infty }n^{\gamma }2^{n}\frac{%
2^{m}}{2^{n}}\left( \frac{1}{2^{n}}\right) ^{\frac{p}{2}}\right) \\
&\leq &Cq^{\frac{p}{2}}\sum_{n\leq m}^{\infty }n^{\gamma }\sqrt{\frac{2^{n}}{%
2^{m}}}\left( \frac{1}{2^{m}}\right) ^{\frac{p-2}{4}}\left( \frac{1}{2^{n}}%
\right) ^{\frac{p-2}{4}}+\sum_{n>m}^{\infty }n^{\gamma }\left( \frac{1}{2^{n}%
}\right) ^{\frac{p}{2}-1} \\
&\leq &Cq^{\frac{p}{2}}\left( \frac{1}{2^{m}}\right) ^{\frac{p-2}{4}}\text{.}
\end{eqnarray*}
\end{proof}

\begin{lemma}
\label{le5}There is a constant $C$ depending only on $p$, $\gamma $ and $d$
such that%
\begin{equation}
\left\Vert \rho _{1}(\boldsymbol{w}^{(m)})^{p}\right\Vert _{q,1}\leq Cq^{%
\frac{p}{2}}  \label{m27-8}
\end{equation}%
for any $q\geq 1$ and $m\in N$.
\end{lemma}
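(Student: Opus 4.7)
The plan is to exploit the standard equivalence $\|F\|_{q,1} \asymp \|F\|_q + \|DF\|_q$, so it suffices to bound the $L^q$-norm of $\rho_1(\boldsymbol{w}^{(m)})^p$ and of its Malliavin derivative separately. The first bound is precisely Lemma \ref{le3} applied with $j=1$; all the work therefore lies in establishing
\begin{equation*}
\|D\rho_1(\boldsymbol{w}^{(m)})^p\|_q \leq Cq^{p/2}
\end{equation*}
uniformly in $m$.

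The strategy follows the pattern of Lemma \ref{le7}: term-by-term triangle inequality combined with the chain rule. Writing $X_{n,k}$ for $w_{t_n^{k-1},t_n^k}^{(m),1}$ and using $\bigl|D|X_{n,k}|^p\bigr|_{\boldsymbol{H}} \leq p|X_{n,k}|^{p-1}|DX_{n,k}|_{\boldsymbol{H}}$, the task reduces to estimating
\begin{equation*}
\sum_{n=1}^\infty n^\gamma \sum_{k=1}^{2^n} \bigl\| |X_{n,k}|^{p-1}\,|DX_{n,k}|_{\boldsymbol{H}} \bigr\|_q.
\end{equation*}
The crucial structural observation is that each $X_{n,k}$ is a \emph{linear} functional of the coordinate Brownian motion, so $DX_{n,k}$ is a deterministic element of $\boldsymbol{H}$ (explicitly identified in Lemma \ref{le6} as $\boldsymbol{1}_{J_n^k}$ when $n \leq m$ and a rescaling of $\boldsymbol{1}_{J_m^{k(n,m)}}$ when $n > m$). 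Consequently the $L^q$-norm factors cleanly, and each summand equals $|DX_{n,k}|_{\boldsymbol{H}} \cdot \|X_{n,k}\|_{q(p-1)}^{p-1}$, a product of quantities respectively bounded by Lemma \ref{le6} and Lemma \ref{lem1a}.

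Inserting those bounds and splitting according to whether $n \leq m$ or $n > m$, the summand is of order $q^{(p-1)/2}(2^n)^{-p/2}$ in the first regime and $q^{(p-1)/2}\,2^{mp/2}(2^n)^{-p}$ in the second. After summing over $k$ and weighting by $n^\gamma$, the first regime contributes a convergent series $\sum_{n \leq m} n^\gamma (2^n)^{1-p/2}$, bounded uniformly in $m$ since $p/2 > 1$. The tail from the second regime behaves like $q^{(p-1)/2} m^\gamma 2^{m(1-p/2)}$, which is bounded uniformly in $m$ precisely because $1 - p/2 < 0$. The elementary bound $q^{(p-1)/2} \leq q^{p/2}$ for $q \geq 1$ then finishes the argument.

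The only real obstacle is the routine bookkeeping between the two regimes; no new tool is needed beyond Lemmas \ref{lem1a} and \ref{le6} together with the observation that the first-level Malliavin derivative is deterministic. The hypothesis $p \in (2,3)$ is used precisely to secure both that the $n \leq m$ geometric series converges and that the $n > m$ contribution is bounded uniformly in $m$.
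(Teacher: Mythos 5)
Your proof is correct and follows essentially the same route as the paper's: the paper likewise observes that $D w_{t_{n}^{k-1},t_{n}^{k}}^{(m),1}$ is a deterministic element of $\boldsymbol{H}$, factors the $L^q$-norm as $p\|Y_1\|_{q(p-1)}^{p-1}|DY_1|_{\boldsymbol{H}}$, uses the same two-regime estimates from Lemma~\ref{lem1a} and Lemma~\ref{le6}, and sums term by term, with the $L^q$-part of the Sobolev norm supplied by Lemma~\ref{le3}. The only cosmetic difference is that you spell out the $n\le m$ versus $n>m$ bookkeeping and the uniformity in $m$ slightly more explicitly than the paper does.
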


\begin{proof}
Let $Y_{1}=w_{t_{n}^{k-1},t_{n}^{k}}^{(m),1}$. Then, by (\ref{m-02}), $%
DY_{1}=\boldsymbol{1}_{J_{n}^{k}}$ (if $n<m$) or $\frac{2^{m}}{2^{n}}%
\boldsymbol{1}_{J_{m}^{k(n,m)}}$ for $n\geq m$, so that%
\begin{eqnarray*}
\left\Vert D\left\vert Y_{1}\right\vert ^{p}\right\Vert _{q} &=&p\left\Vert
Y_{1}\right\Vert _{q(p-1)}^{p-1}\left\vert DY_{1}\right\vert _{\boldsymbol{H}%
} \\
&=&\left\{ 
\begin{array}{cc}
\left( \frac{1}{\sqrt{2^{n}}}||\xi ||_{(p-1)q}\right) ^{p-1}\sqrt{\frac{1}{%
2^{n}}} & \text{for }n<m \\ 
\left( \frac{\sqrt{2^{m}}}{2^{n}}||\xi ||_{(p-1)q}\right) ^{p-1}\frac{2^{m}}{%
2^{n}}\sqrt{\frac{1}{2^{m}}} & \text{for }n\geq m\text{ }%
\end{array}%
\right.
\end{eqnarray*}%
where $\xi \sim N(0,1_{\mathbb{R}^{d}})$. Hence%
\begin{eqnarray*}
\left\Vert \left\vert D\rho _{1}(\boldsymbol{w}^{(m)})^{p}\right\vert _{%
\boldsymbol{H}}\right\Vert _{q} &\leq &\sum_{n=1}^{\infty }n^{\gamma
}\sum_{k=1}^{2^{n}}\left\Vert D\left\vert
w_{t_{n}^{k-1},t_{n}^{k}}^{(m),1}\right\vert ^{p}\right\Vert _{q} \\
&\leq &Cq^{\frac{p-1}{2}}\sum_{n=1}^{\infty }n^{\gamma }\left( \frac{1}{2^{n}%
}\right) ^{\frac{p}{2}-1}\text{.}
\end{eqnarray*}
\end{proof}

\begin{lemma}
\label{le8}There is a constant $C$ depending only on $p$, $\gamma $ and $d$
such that%
\begin{equation}
\left\Vert \rho _{j}(\boldsymbol{w}^{(m+1)},\boldsymbol{w}^{(m)})^{\frac{p}{j%
}}\right\Vert _{q,1}\leq Cq^{\frac{p}{2}}\left( \frac{1}{2^{m}}\right) ^{%
\frac{p-2}{4}}  \label{m23-7a}
\end{equation}%
for any $m$, $q\geq 1$ and $j=1,2$.
\end{lemma}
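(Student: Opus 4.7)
The $L^q$ bound for $u_j^{(m)} := \rho_j(\boldsymbol{w}^{(m+1)},\boldsymbol{w}^{(m)})^{p/j}$ is already Lemma~\ref{le4}, so my task is to match it with a bound on $\|Du_j^{(m)}\|_q$. The plan is to work first with the cylindrical truncations $u_j^{(m),N}$, which are smooth polynomials in Gaussian increments and hence lie in every $\mathbb{D}_1^q$; bound their Malliavin derivatives uniformly in $N$; and then invoke closability of $D$ together with the $L^q$-convergence $u_j^{(m),N}\to u_j^{(m)}$ to transfer the estimate to $u_j^{(m)}\in\mathbb{D}_1^q$.

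By the chain rule and the triangle inequality in $\boldsymbol{H}$,
\[
\|Du_j^{(m),N}\|_q \;\le\; \sum_{n=1}^{N} n^\gamma \sum_{k=1}^{2^n}\|Df_{m,n,k}^{j}\|_q,
\]
where $f_{m,n,k}^{j}$ is the functional of Lemma~\ref{le7} with the exponent $2\tilde N=p/j$. Since $p\in(2,3)$, $p/j>1$ for both $j=1,2$, so the hypothesis of Lemma~\ref{le7} is met and its estimates apply directly, giving $q$-dependence $q^{p/2}$ in every case.

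For $j=1$ only the range $n>m$ contributes, because the polygonal refinement is exact at dyadic points of scale $\le m$; inserting the bound $\|Df_{m,n,k}^{1}\|_q \le Cq^{p/2}(2^m/2^{2n})^{p/2}$, summing over $k$ and evaluating the resulting geometric series in $n$ (ratio $2^{1-p}<1$) yields a dominant term of order $Cq^{p/2}\,m^\gamma\,2^{-m(p-2)/2}$. For $j=2$ I split at $n=m$: the range $n\le m$ contributes $Cq^{p/2}\,2^{-mp/4}\sum_{n\le m} n^\gamma 2^{n(1-p/4)}$, which, since $p/4<1$ makes the exponent $1-p/4$ positive, is dominated by its last term and yields again $Cq^{p/2}\,m^\gamma\,2^{-m(p-2)/2}$; the range $n>m$ is handled exactly as for $j=1$. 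Because $2^{-m(p-2)/2}$ is much smaller than the target $2^{-m(p-2)/4}$, the polynomial factor $m^\gamma$ is easily absorbed and the advertised bound follows (adjusting the constant to handle small $m$).

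The only technical point beyond this bookkeeping is the passage from $u_j^{(m),N}$ to $u_j^{(m)}$ in $\mathbb{D}_1^q$ rather than only in $L^q$; this is not a genuine obstacle, since the series $\sum_n n^\gamma\sum_k\|Df_{m,n,k}^j\|_q$ converges by the estimates above, so $\{Du_j^{(m),N}\}$ is Cauchy in $L^q(\boldsymbol{W};\boldsymbol{H})$ and closability of $D$ identifies the limit with $Du_j^{(m)}$.
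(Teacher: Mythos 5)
Your proposal is essentially correct and follows the same route as the paper: bound $\|u_{m,n,k}^j\|_{q,1}$ term by term via Lemma~\ref{le7} (and the corresponding $L^q$ estimate), then sum over $n,k$ with the $n^{\gamma}$ weight. Your bookkeeping is accurate --- for $j=1$ only $n>m$ contributes and gives a geometric series of ratio $2^{1-p}$ with leading term $\sim m^{\gamma}2^{-m(p-2)/2}$; for $j=2$ and $n\le m$ the partial sum $\sum_{n\le m}n^\gamma 2^{n(1-p/4)}$ is dominated by its top term since $1-p/4>0$, again giving $\sim m^{\gamma}2^{-m(p-2)/2}$; and both are comfortably below the advertised $2^{-m(p-2)/4}$. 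One cosmetic inaccuracy: the cylindrical truncations $u_j^{(m),N}$ are \emph{not} smooth polynomials --- each term is a fractional power $|X_j|^{p/j}$ with $p/j$ non-integral --- but they do lie in $\mathbb{D}_1^q$ precisely because $p/j>1$ (the same hypothesis under which Lemma~\ref{le7} gives $Df_{m,n,k}^j$), which is all that your closability argument requires. That closability step is, if anything, a small improvement in rigor over the paper, which silently invokes the triangle inequality for the infinite sum in $\mathbb{D}_1^q$; your argument makes the passage to the limit explicit.
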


\begin{proof}
Let $u_{m,n,k}^{j}(w)=\left\vert
w_{t_{n}^{k-1},t_{n}^{k}}^{(m+1),j}-w_{t_{n}^{k-1},t_{n}^{k}}^{(m),j}\right%
\vert ^{\frac{p}{j}}$. According to (\ref{m22-4}), (\ref{m22-05}), (\ref%
{m23-6}) and (\ref{m23-5}) with $2\tilde{N}=\frac{p}{j}$, we may deduce that 
\begin{equation*}
\left\Vert u_{m,n,k}^{1}\right\Vert _{q,1}\leq Cq^{\frac{p}{2}}\left( \frac{%
2^{m}}{2^{2n}}\right) ^{\frac{p}{2}}\text{\ for }n>m
\end{equation*}%
and%
\begin{equation*}
\left\Vert u_{m,n,k}^{2}\right\Vert _{q,1}\leq \left\{ 
\begin{array}{cc}
Cq^{\frac{p}{2}}\left( \frac{1}{2^{m+n}}\right) ^{\frac{p}{4}} & \text{\ if }%
n\leq m\text{,} \\ 
Cq^{\frac{p}{2}}\left( \frac{2^{m}}{2^{2n}}\text{ }\right) ^{\frac{p}{2}} & 
\text{if }n>m\text{ .}%
\end{array}%
\right.
\end{equation*}%
On the other hand, by triangle inequality 
\begin{equation*}
\left\Vert \rho _{j}(\boldsymbol{w}^{(m+1)},\boldsymbol{w}^{(m)})^{\frac{p}{j%
}}\right\Vert _{q,1}\leq \sum_{n=1}^{\infty }n^{\gamma
}\sum_{k=1}^{2^{n}}||u_{m,n,k}^{j}||_{q,1}\text{.}
\end{equation*}%
It follows thus that%
\begin{eqnarray*}
\left\Vert \rho _{1}(\boldsymbol{w}^{(m+1)},\boldsymbol{w}%
^{(m)})^{p}\right\Vert _{q,1} &\leq &Cq^{\frac{p}{2}}\sum_{n>m}^{\infty
}n^{\gamma }2^{n}\left( \frac{1}{2^{n}}\right) ^{\frac{p-2}{2}} \\
&\leq &Cq^{\frac{p}{2}}\left( \frac{1}{2^{m}}\right) ^{\frac{p-2}{4}%
}\sum_{n>m}^{\infty }n^{\gamma }\left( \frac{1}{2^{n}}\right) ^{\frac{p-2}{4}%
} \\
&\leq &Cq^{\frac{p}{2}}\left( \frac{1}{2^{m}}\right) ^{\frac{p-2}{4}}\text{.}
\end{eqnarray*}%
and%
\begin{eqnarray*}
\left\Vert D\rho _{2}(\boldsymbol{w}^{(m+1)},\boldsymbol{w}^{(m)})^{\frac{p}{%
2}}\right\Vert _{q} &\leq &Cq^{\frac{p}{2}}\left( \sum_{n\leq m}^{\infty
}n^{\gamma }2^{n}\left( \frac{1}{2^{m+n}}\right) ^{\frac{p}{4}%
}+\sum_{n>m}^{\infty }n^{\gamma }2^{n}\left( \frac{2^{m}}{2^{2n}}\text{ }%
\right) ^{\frac{p}{2}}\right) \\
&\leq &Cq^{\frac{p}{2}}\left( \frac{1}{2^{m}}\right) ^{\frac{p-2}{4}%
}\sum_{n=1}^{\infty }n^{\gamma }\left( \frac{1}{2^{n}}\right) ^{\frac{p-2}{4}%
}\text{.}
\end{eqnarray*}%
where constants $C$ may be different from line to line but only depend on $%
p,\gamma $ and $d$.
\end{proof}

Finally we need an $L^{q}$-estimate for the\ malliavin derivative of $\rho
_{1}(\boldsymbol{w}^{(m)})^{p}\rho _{1}(\boldsymbol{w}^{(m+1)},\boldsymbol{w}%
^{(m)})^{p}$.

\begin{lemma}
\label{le9}There is a constant $C$ depending only on $p$, $\gamma $ and $d$
such that%
\begin{equation}
\left\Vert \rho _{1}(\boldsymbol{w}^{(m)})^{p}\rho _{1}(\boldsymbol{w}%
^{(m+1)},\boldsymbol{w}^{(m)})^{p}\right\Vert _{q,1}\leq Cq^{\frac{p}{2}%
}\left( \frac{1}{2^{m}}\right) ^{\frac{p-2}{8}}  \label{m27-9}
\end{equation}%
for any $m\in \mathbb{N}$.
\end{lemma}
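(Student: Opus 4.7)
The plan is to use the Leibniz rule for the Malliavin derivative together with H\"older's inequality to reduce the estimate to the four Sobolev/moment bounds already proved in Lemmas \ref{le3}, \ref{le4}, \ref{le5}, and \ref{le8}. Set
\[
F=\rho_{1}(\boldsymbol{w}^{(m)})^{p},\qquad G=\rho_{1}(\boldsymbol{w}^{(m+1)},\boldsymbol{w}^{(m)})^{p},
\]
so that the quantity to control is $\|FG\|_{q,1}$, which up to equivalence is $\|FG\|_{q}+\|D(FG)\|_{q}$. Since $F,G\in\mathbb{D}^{q}_{1}$ for every $q\ge 1$ (that is essentially the content of Lemmas \ref{le5} and \ref{le8}), and both arise as suprema of the continuous truncations $u^{(m),N}_{1}$, the product rule
\[
D(FG)=F\cdot DG+G\cdot DF
\]
extends to these functionals by passage to the limit in $\mathbb{D}^{q}_{1}$.

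First I would split the three resulting terms by Cauchy--Schwarz,
\[
\|FG\|_{q}\le\|F\|_{2q}\|G\|_{2q},\quad \|F\cdot DG\|_{q}\le\|F\|_{2q}\|DG\|_{2q},\quad \|G\cdot DF\|_{q}\le\|G\|_{2q}\|DF\|_{2q},
\]
using $|F\cdot DG|_{\boldsymbol{H}}=|F|\,|DG|_{\boldsymbol{H}}$ since $F$ is scalar-valued. Next I would plug in the previous lemmas at level $2q$: Lemma \ref{le3} (with $j=1$) gives $\|F\|_{2q}\le Cq^{p/2}$; Lemma \ref{le5} gives $\|DF\|_{2q}\le Cq^{p/2}$, since it in fact bounds $\|F\|_{2q,1}$; Lemma \ref{le4} (with $j=1$) gives $\|G\|_{2q}\le Cq^{p/2}(1/2^{m})^{(p-2)/4}$; and Lemma \ref{le8} (with $j=1$) gives the same bound for $\|DG\|_{2q}$.

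Multiplying these pairwise estimates shows that each of the three contributions is dominated by $Cq^{p}(1/2^{m})^{(p-2)/4}$, which is in fact stronger in the dyadic scale than the claimed (\ref{m27-9}); after absorbing constants and using that $(p-2)/4>(p-2)/8$, the stated inequality follows. I do not anticipate any serious obstacle: all of the analytic work — the Brownian moment estimates, the Malliavin derivative of the first- and second-level increments, and the careful splitting of the sums $\sum_{n\le m}$ vs.\ $\sum_{n>m}$ defining $\rho_{1}$ — is already encapsulated in Lemmas \ref{le3}--\ref{le8}. The remaining step is a purely algebraic combination via the Leibniz rule and Cauchy--Schwarz.
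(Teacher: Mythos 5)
Your approach is essentially the same as the paper's: apply the Leibniz rule $D(FG)=F\,DG+G\,DF$, split the three resulting terms by Cauchy--Schwarz, and plug in the $L^{2q}$-estimates from Lemmas~\ref{le3}, \ref{le4}, \ref{le5} and \ref{le8}. The product rule does extend to $F,G\in\mathbb{D}^{q}_{1}$ as you say, and $|F\cdot DG|_{\boldsymbol{H}}=|F|\,|DG|_{\boldsymbol{H}}$ is correct since $F$ is scalar.

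There is, however, one detail worth pausing on. You invoke the standard Cauchy--Schwarz bound $\|FG\|_{q}\leq\|F\|_{2q}\|G\|_{2q}$ and likewise for the two derivative terms; combined with $\|F\|_{2q}\lesssim q^{p/2}$ and $\|G\|_{2q}\lesssim q^{p/2}(1/2^{m})^{(p-2)/4}$, this yields a bound of the form $Cq^{p}(1/2^{m})^{(p-2)/4}$. You then conclude that the stated estimate $Cq^{p/2}(1/2^{m})^{(p-2)/8}$ ``follows,'' but it does not, at least not literally: for $q\geq 1$ one has $q^{p}\geq q^{p/2}$, so your bound is larger in the $q$-variable even though it is smaller in the dyadic scale, and neither bound dominates the other. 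The paper's own proof arrives at exactly $Cq^{p/2}(1/2^{m})^{(p-2)/8}$ by writing the Cauchy--Schwarz step as $\|FG\|_{q}\leq\sqrt{\|F\|_{2q}}\sqrt{\|G\|_{2q}}$; but this inequality is not valid in general (take $F=G$ a constant greater than $1$), so in fact the paper's derivation contains a slip and your $Cq^{p}(1/2^{m})^{(p-2)/4}$ is what the argument actually proves. None of this matters downstream: in Theorem~\ref{th8} the lemma is applied at a fixed $q$, so the $q$-dependence is absorbed into the constant and the stronger dyadic exponent $(p-2)/4$ only improves the allowed range of $\beta$. Still, you should state honestly that your estimate has the form $Cq^{p}(1/2^{m})^{(p-2)/4}$ rather than asserting that (\ref{m27-9}) follows verbatim from it.
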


\begin{proof}
Firstly, by Cauchy-Schwartz's inequality%
\begin{eqnarray*}
&&\left\Vert \rho _{1}(\boldsymbol{w}^{(m)})^{p}\rho _{1}(\boldsymbol{w}%
^{(m+1)},\boldsymbol{w}^{(m)})^{p}\right\Vert _{q} \\
&\leq &\sqrt{\left\Vert \rho _{1}(\boldsymbol{w}^{(m)})^{p}\right\Vert _{2q}}%
\sqrt{\left\Vert \rho _{1}(\boldsymbol{w}^{(m+1)},\boldsymbol{w}%
^{(m)})^{p}\right\Vert _{2q}} \\
&\leq &Cq^{\frac{p}{2}}\left( \frac{1}{2^{m}}\right) ^{\frac{p-2}{8}}\text{.}
\end{eqnarray*}%
Similarly, by chain rule, and Cauchy-Schwartz inequality%
\begin{eqnarray*}
&&\left\Vert D\left( \rho _{1}(\boldsymbol{w}^{(m)})^{p}\rho _{1}(%
\boldsymbol{w}^{(m+1)},\boldsymbol{w}^{(m)})^{p}\right) \right\Vert _{q} \\
&\leq &\left\Vert \rho _{1}(\boldsymbol{w}^{(m)})^{p}D\rho _{1}(\boldsymbol{w%
}^{(m+1)},\boldsymbol{w}^{(m)})^{p}\right\Vert _{q} \\
&&+\left\Vert \rho _{1}(\boldsymbol{w}^{(m+1)},\boldsymbol{w}%
^{(m)})^{p}D\rho _{1}(\boldsymbol{w}^{(m)})^{p}\right\Vert _{q} \\
&\leq &\sqrt{\left\Vert \rho _{1}(\boldsymbol{w}^{(m)})^{p}\right\Vert _{2q}}%
\sqrt{\left\Vert D\rho _{1}(\boldsymbol{w}^{(m+1)},\boldsymbol{w}%
^{(m)})^{p}\right\Vert _{2q}} \\
&&+\sqrt{\left\Vert D\rho _{1}(\boldsymbol{w}^{(m)})^{p}\right\Vert _{2q}}%
\sqrt{\left\Vert \rho _{1}(\boldsymbol{w}^{(m+1)},\boldsymbol{w}%
^{(m)})^{p}\right\Vert _{2q}}\text{,}
\end{eqnarray*}%
together with (\ref{m27-5}), (\ref{m27-8}) and (\ref{m23-7a}) we obtain (\ref%
{m27-9}).
\end{proof}

Recall that $p\in (2,3)$ and $\gamma >\frac{p}{2}-1$, and $C_{p,\gamma }$
the constant appearing in (\ref{m29-9}).

\begin{theorem}
\label{th8}Suppose $\beta \in (0,\frac{p-2}{8p})$, then%
\begin{equation}
\sum_{m}\text{Cap}_{q,1}\left\{ d_{p}(\boldsymbol{w}^{(m+1)},\boldsymbol{w}%
^{(m)})|>C_{p,\gamma }\left( \frac{1}{2^{m}}\right) ^{\beta }\right\}
<\infty \text{ .}  \label{tr-01}
\end{equation}
\end{theorem}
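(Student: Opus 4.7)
The plan is to reduce the bound on the $p$-variation distance to bounds on the dominating quantities $\rho_1$, $\rho_2$ using Lemma \ref{le1}, and then apply the capacity maximal inequality (\ref{m29-5}) together with the Sobolev estimates already assembled in Lemmas \ref{le8} and \ref{le9}.

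First I would use (\ref{m29-10}) to write
\begin{equation*}
\{d_p(\boldsymbol{w}^{(m+1)},\boldsymbol{w}^{(m)}) > C_{p,\gamma}(1/2^m)^{\beta}\} \subset E_1^{(m)} \cup E_2^{(m)} \cup E_3^{(m)},
\end{equation*}
where, after absorbing the constant $C$ into $C_{p,\gamma}$, $E_j^{(m)} = \{\rho_j(\boldsymbol{w}^{(m+1)},\boldsymbol{w}^{(m)}) > c(1/2^m)^{\beta}\}$ for $j=1,2$, and $E_3^{(m)} = \{\rho_1(\boldsymbol{w}^{(m+1)},\boldsymbol{w}^{(m)})(\rho_1(\boldsymbol{w}^{(m)})+\rho_1(\boldsymbol{w}^{(m+1)})) > c(1/2^m)^{\beta}\}$ for a suitable constant $c>0$. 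Sub-additivity of capacity reduces the theorem to a summable bound on each of \textit{Cap}$_{q,1}(E_j^{(m)})$.

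For $E_1^{(m)}$ and $E_2^{(m)}$, I would raise the defining inequality to the power $p/j$ and apply (\ref{m29-5}) to the lower semi-continuous, Malliavin-differentiable functional $\rho_j(\boldsymbol{w}^{(m+1)},\boldsymbol{w}^{(m)})^{p/j}$. Lemma \ref{le8} then yields
\begin{equation*}
\text{\textit{Cap}}_{q,1}(E_j^{(m)}) \leq \frac{C_q}{c^{p/j}(1/2^m)^{p\beta/j}} \cdot Cq^{p/2}\left(\frac{1}{2^m}\right)^{(p-2)/4},
\end{equation*}
which is summable in $m$ provided $p\beta/j < (p-2)/4$; since $j \in \{1,2\}$, the binding constraint here is $\beta < (p-2)/(4p)$, which is implied by the hypothesis $\beta < (p-2)/(8p)$.

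The main obstacle, and the reason for the tighter constraint on $\beta$, is $E_3^{(m)}$: the functional $\rho_1(\boldsymbol{w}^{(m+1)},\boldsymbol{w}^{(m)})(\rho_1(\boldsymbol{w}^{(m)})+\rho_1(\boldsymbol{w}^{(m+1)}))$ is a product that does not decay like a single $\rho$, so one cannot simply invoke Lemma \ref{le8}. I would split $E_3^{(m)}$ into the two events controlled by $\rho_1(\boldsymbol{w}^{(m+1)},\boldsymbol{w}^{(m)})\rho_1(\boldsymbol{w}^{(m)})$ and $\rho_1(\boldsymbol{w}^{(m+1)},\boldsymbol{w}^{(m)})\rho_1(\boldsymbol{w}^{(m+1)})$. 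Raising to the $p$-th power and applying (\ref{m29-5}) together with Lemma \ref{le9} (noting that the estimate there is symmetric in $m$ and $m+1$ up to constants) gives
\begin{equation*}
\text{\textit{Cap}}_{q,1}(E_3^{(m)}) \leq \frac{C_q}{c^p(1/2^m)^{p\beta}} \cdot Cq^{p/2}\left(\frac{1}{2^m}\right)^{(p-2)/8}.
\end{equation*}
This is summable precisely when $p\beta < (p-2)/8$, i.e.\ $\beta < (p-2)/(8p)$, giving the claimed range. Summing the three contributions finishes the proof.
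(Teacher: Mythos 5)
Your proposal is correct and follows essentially the same route as the paper: dominate $d_p$ via (\ref{m29-10}), raise the $\rho$-events to the appropriate power, apply the capacity Tchebycheff inequality (\ref{m29-5}), and invoke Lemmas \ref{le8} and \ref{le9}, with the product term $\rho_1(\boldsymbol{w}^{(m+1)},\boldsymbol{w}^{(m)})\rho_1$ forcing the sharper constraint $\beta<\frac{p-2}{8p}$. You in fact make explicit two things the paper's proof glosses over — the union decomposition coming from the three-way max in (\ref{m29-10}), and the fact that Lemma \ref{le9} must also cover the $\rho_1(\boldsymbol{w}^{(m+1)})$ factor (which, as you note, holds since the Sobolev bound of Lemma \ref{le5} on $\rho_1(\boldsymbol{w}^{(m)})^p$ is uniform in $m$).
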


\begin{proof}
By using our basic estimate (\ref{m23-7a})%
\begin{eqnarray*}
\text{\textit{Cap}}_{q,1}\left\{ \rho _{j}(\boldsymbol{w}^{(m+1)},%
\boldsymbol{w}^{(m)})>\lambda ^{\frac{j}{p}}\right\} &=&\text{\textit{Cap}}%
_{q,1}\left\{ \rho _{j}(\boldsymbol{w}^{(m+1)},\boldsymbol{w}^{(m)})^{\frac{p%
}{j}}>\lambda \right\} \\
&\leq &\frac{1}{\lambda }\left\Vert \rho _{j}(\boldsymbol{w}^{(m+1)},%
\boldsymbol{w}^{(m)})^{\frac{p}{j}}\right\Vert _{q,1} \\
&\leq &\frac{C}{\lambda }\left( \frac{1}{2^{m}}\right) ^{\frac{p-2}{4}}
\end{eqnarray*}%
where $C$ is a constant depending only on $d$ and $p$. Choose $\lambda $
such that%
\begin{equation*}
\lambda ^{j/p}=\left( \frac{1}{2^{m}}\right) ^{\beta }
\end{equation*}%
to obtain%
\begin{equation*}
\text{\textit{Cap}}_{q,1}\left\{ \rho _{j}(\boldsymbol{w}^{(m+1)},%
\boldsymbol{w}^{(m)})>\left( \frac{1}{2^{m}}\right) ^{\beta }\right\} \leq
C\left( \frac{1}{2^{m}}\right) ^{\frac{p-2}{4}-\frac{p\beta }{j}}\text{.}
\end{equation*}%
Since $\frac{p-2}{4}-\frac{p\beta }{j}>0$ so that 
\begin{equation*}
\sum_{m}\text{\textit{Cap}}_{q,1}\left\{ |\rho _{j}(\boldsymbol{w}^{(m+1)},%
\boldsymbol{w}^{(m)})|>\left( \frac{1}{2^{m}}\right) ^{\beta }\right\}
<\infty \text{.}
\end{equation*}%
Similarly 
\begin{eqnarray*}
&&\text{\textit{Cap}}_{q,1}\left( |\rho _{1}(\boldsymbol{w}^{(m)})\rho _{1}(%
\boldsymbol{w}^{(m+1)},\boldsymbol{w}^{(m)})|>\lambda ^{1/p}\right) \\
&\leq &\frac{1}{\lambda }\left\Vert \rho _{1}(\boldsymbol{w}^{(m)})^{p}\rho
_{1}(\boldsymbol{w}^{(m+1)},\boldsymbol{w}^{(m)})^{p}\right\Vert _{q,1} \\
&\leq &\frac{C}{\lambda }\left( \frac{1}{2^{m}}\right) ^{\frac{p-2}{8}}
\end{eqnarray*}%
so that%
\begin{equation*}
\text{\textit{Cap}}_{q,1}\left\{ \rho _{j}(\boldsymbol{w}^{(m+1)},%
\boldsymbol{w}^{(m)})>\left( \frac{1}{2^{m}}\right) ^{\beta }\right\} \leq
C\left( \frac{1}{2^{m}}\right) ^{\frac{p-2}{8}-p\beta }\text{.}
\end{equation*}%
Now (\ref{tr-01}) follows from (\ref{m29-9}).
\end{proof}

\begin{corollary}
Suppose $p\in (2,3)$, then%
\begin{equation*}
\text{\textit{Cap}}_{q,1}\left\{ \sum_{m=1}^{\infty }d_{p}(\boldsymbol{w}%
^{(m+1)},\boldsymbol{w}^{(m)})=\infty \right\} =0\text{, \ \ \ }\forall
q\geq 1\text{.}
\end{equation*}
\end{corollary}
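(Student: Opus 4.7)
The plan is that this corollary is essentially a clean consequence of Theorem \ref{th8} combined with the capacity Borel--Cantelli lemma which is flagged in the introduction as the starting point of the whole approach. First I would fix $q\ge 1$, choose $\beta \in (0,\tfrac{p-2}{8p})$, and introduce the sets
\begin{equation*}
A_{m} = \left\{ w \in \boldsymbol{W} : d_{p}(\boldsymbol{w}^{(m+1)},\boldsymbol{w}^{(m)}) > C_{p,\gamma }\left( \tfrac{1}{2^{m}}\right) ^{\beta }\right\}.
\end{equation*}
Theorem \ref{th8} already tells us that $\sum_{m=1}^{\infty }\text{Cap}_{q,1}(A_{m}) < \infty$, so the subadditivity of capacity combined with $\text{Cap}_{q,1}(\bigcup_{m\ge M}A_{m}) \le \sum_{m\ge M}\text{Cap}_{q,1}(A_{m}) \to 0$ yields
\begin{equation*}
\text{Cap}_{q,1}\left( \overline{\lim }_{m\to \infty }A_{m}\right) = 0.
\end{equation*}

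Next I would observe that if $w \notin \overline{\lim}_{m}A_{m}$, then $w \notin A_{m}$ for all $m$ sufficiently large, i.e.\ $d_{p}(\boldsymbol{w}^{(m+1)},\boldsymbol{w}^{(m)}) \le C_{p,\gamma }(1/2^{m})^{\beta }$ eventually, so the series $\sum_{m}d_{p}(\boldsymbol{w}^{(m+1)},\boldsymbol{w}^{(m)})$ is dominated by a tail of the convergent geometric series $\sum_{m}(1/2^{m})^{\beta}$. Consequently
\begin{equation*}
\left\{ w : \sum_{m=1}^{\infty }d_{p}(\boldsymbol{w}^{(m+1)},\boldsymbol{w}^{(m)}) = \infty \right\} \subseteq \overline{\lim }_{m\to \infty }A_{m},
\end{equation*}
and monotonicity of $\text{Cap}_{q,1}$ finishes the proof.

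There is really no obstacle here: Theorem \ref{th8} does all the analytic work (Sobolev norm estimates for $\rho_{j}$ and the capacity Tchebycheff inequality \eqref{m29-5}), and the only ingredient added at the corollary stage is the elementary capacity Borel--Cantelli argument (itself just subadditivity together with the definition of $\overline{\lim}$). If anything deserves a line of care, it is the inclusion of the divergence set into $\overline{\lim}_{m}A_{m}$, which requires the summability of the deterministic bound $\sum_{m}(1/2^{m})^{\beta}$ — this is guaranteed by $\beta>0$.
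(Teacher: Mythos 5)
Your proposal is correct and is essentially the argument the paper itself uses: the passage just before Theorem \ref{th8} already lays out the sets $A_m$, the capacity Borel--Cantelli lemma, and the inclusion of the divergence set in $\overline{\lim}_m A_m$, and the corollary follows once Theorem \ref{th8} supplies the summability of $\text{Cap}_{q,1}(A_m)$. Your extra remark about needing $\beta>0$ for $\sum_m (1/2^m)^\beta<\infty$ is a small but accurate point of care that the paper leaves implicit.
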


We have thus proved (\ref{m28-4}) for $N=1$.

\section{The proof of the quasi-sure convergence}

Guided by the estimates we have obtained in the previous section, we wish to
show that for every pair $q\geq 1$ and $N\in \mathbb{N}$ 
\begin{equation*}
\text{\textit{Cap}}_{q,N}\left\{ \rho _{j}(\boldsymbol{w}^{(m+1)},%
\boldsymbol{w}^{(m)})^{\frac{p}{j}}>C\left( \frac{1}{2^{m}}\right) ^{\beta
}\right\} \leq C^{\prime }\left( \frac{1}{2^{m}}\right) ^{\varepsilon }
\end{equation*}%
for some choices of $\beta >0$ and $\varepsilon >0$, where $C$ and $%
C^{\prime }$ are two constants independent of $m$.

Therefore we are interested in the capacity%
\begin{equation*}
I_{j}(m)=\text{\textit{Cap}}_{q,N}\left\{ \rho _{j}(\boldsymbol{w}^{(m+1)},%
\boldsymbol{w}^{(m)})^{\frac{p}{j}}>\lambda \right\}
\end{equation*}%
($j=1,2$).

Since 
\begin{equation*}
\rho _{j}(\boldsymbol{w}^{(m+1)},\boldsymbol{w}^{(m)})^{\frac{p}{j}%
}=\sum_{n=1}^{\infty }n^{\gamma }\sum_{k=1}^{2^{n}}\left\vert
w_{t_{n}^{k-1},t_{n}^{k}}^{(m+1),j}-w_{t_{n}^{k-1},t_{n}^{k}}^{(m),j}\right%
\vert ^{\frac{p}{j}}
\end{equation*}%
so that, for every $\theta >0$%
\begin{equation}
\left\{ \rho _{j}(\boldsymbol{w}^{(m+1)},\boldsymbol{w}^{(m)})^{\frac{p}{j}%
}>\lambda \right\} \subset \dbigcup\limits_{n=1}^{\infty }\left\{
\sum_{k=1}^{2^{n}}\left\vert
w_{t_{n}^{k-1},t_{n}^{k}}^{(m+1),j}-w_{t_{n}^{k-1},t_{n}^{k}}^{(m),j}\right%
\vert ^{\frac{p}{j}}>C_{\theta }\lambda \left( \frac{1}{2^{n}}\right)
^{\theta }\right\}  \label{4-22-a1}
\end{equation}%
where 
\begin{equation*}
C_{\theta }=\frac{1}{\sum_{n=1}^{\infty }n^{\gamma }\left( \frac{1}{2^{n}}%
\right) ^{\theta }}\text{. }
\end{equation*}%
Therefore%
\begin{eqnarray}
I_{j}(m) &\leq &\sum_{n=1}^{\infty }\text{\textit{Cap}}_{q,N}\left\{
\sum_{k=1}^{2^{n}}\left\vert
w_{t_{n}^{k-1},t_{n}^{k}}^{(m+1),j}-w_{t_{n}^{k-1},t_{n}^{k}}^{(m),j}\right%
\vert ^{\frac{p}{j}}>\lambda C_{\theta }\left( \frac{1}{2^{n}}\right)
^{\theta }\right\}  \notag \\
&\leq &\sum_{n=1}^{\infty }\sum_{k=1}^{2^{n}}\text{\textit{Cap}}%
_{q,N}\left\{ \left\vert
w_{t_{n}^{k-1},t_{n}^{k}}^{(m+1),j}-w_{t_{n}^{k-1},t_{n}^{k}}^{(m),j}\right%
\vert ^{\frac{p}{j}}>\lambda C_{\theta }\left( \frac{1}{2^{n}}\right)
^{\theta +1}\right\} \text{.}  \label{4-22-a2}
\end{eqnarray}

On the other hand, for any $\tilde{N}>0$ we have 
\begin{eqnarray*}
&&\text{\textit{Cap}}_{q,N}\left\{ \left\vert
w_{t_{n}^{k-1},t_{n}^{k}}^{(m+1),j}-w_{t_{n}^{k-1},t_{n}^{k}}^{(m),j}\right%
\vert ^{\frac{p}{j}}>\lambda C_{\theta }\left( \frac{1}{2^{n}}\right)
^{\theta +1}\right\} \\
&=&\text{\textit{Cap}}_{q,N}\left\{ \left\vert
w_{t_{n}^{k-1},t_{n}^{k}}^{(m+1),j}-w_{t_{n}^{k-1},t_{n}^{k}}^{(m),j}\right%
\vert >\lambda ^{\frac{j}{p}}C_{\theta }^{\frac{j}{p}}\left( \frac{1}{2^{n}}%
\right) ^{\frac{j}{p}(\theta +1)}\right\} \\
&=&\text{ \textit{Cap}}_{q,N}\left\{ f_{m,n,k}^{j}>\left[ \lambda ^{\frac{j}{%
p}}C_{\theta }^{\frac{j}{p}}\left( \frac{1}{2^{n}}\right) ^{\frac{j}{p}%
(\theta +1)}\right] ^{2\tilde{N}}\right\}
\end{eqnarray*}%
where 
\begin{equation*}
f_{m,n,k}^{j}(w)=\left\vert
w_{t_{n}^{k-1},t_{n}^{k}}^{(m+1),j}-w_{t_{n}^{k-1},t_{n}^{k}}^{(m),j}\right%
\vert ^{2\tilde{N}}\text{ \ \ for }w\in \boldsymbol{W}\text{ .}
\end{equation*}%
If $\tilde{N}$ is a natural number, then $f_{m,n,k}^{j}$ are polynomials of
the Brownian motion paths, so are smooth functionals on the Wiener space $%
\boldsymbol{W}$ in Malliavin's sense. This latter fact allows us to apply
the capacity maximal inequality to bound the preceding capacity. Namely, for
each pair $q\geq 1$ and $N\in \mathbb{N}$, we have%
\begin{equation}
\text{\textit{Cap}}_{q,N}\left\{ \left\vert
w_{t_{n}^{k-1},t_{n}^{k}}^{(m+1),j}-w_{t_{n}^{k-1},t_{n}^{k}}^{(m),j}\right%
\vert ^{\frac{p}{j}}>\lambda C_{\theta }\left( \frac{1}{2^{n}}\right)
^{\theta +1}\right\} \leq C\left[ \lambda ^{\frac{j}{p}}C_{\theta }^{\frac{j%
}{p}}\left( \frac{1}{2^{n}}\right) ^{\frac{j}{p}(\theta +1)}\right] ^{-2%
\tilde{N}}||f_{m,n,k}^{j}||_{q,N}  \label{4-22-3a}
\end{equation}%
where $C$ depends only on $d,q$ and $N$. It thus follows that%
\begin{equation}
I_{j}(m)\leq C\sum_{n=1}^{\infty }\sum_{k=1}^{2^{n}}\left[ \lambda ^{\frac{j%
}{p}}C_{\theta }^{\frac{j}{p}}\left( \frac{1}{2^{n}}\right) ^{\frac{j}{p}%
(\theta +1)}\right] ^{-2\tilde{N}}||f_{m,n,k}^{j}||_{q,N}\text{.}
\label{4-22-4a}
\end{equation}

Therefore, we need to estimate the Sobolev norm $||f_{m,n,k}^{j}||_{q,N}$ in
order to prove our main theorem \ref{th5}, and we will see that there is a
good reason (see the constraint (\ref{4-24-1}) below) that we need to raise
the power of $%
|w_{t_{n}^{k-1},t_{n}^{k}}^{(m+1),j}-w_{t_{n}^{k-1},t_{n}^{k}}^{(m),j}|^{%
\frac{p}{j}}$ $\ $to $2\tilde{N}$ for large enough $\tilde{N}$.

To this end, we first need to evaluate higher order Malliavin derivatives of 
$f_{m,n,k}^{j}$. For simplicity, let $%
X_{j}(w)=w_{t_{n}^{k-1},t_{n}^{k}}^{(m+1),j}-w_{t_{n}^{k-1},t_{n}^{k}}^{(m),j}
$ and $Y_{j}(w)=w_{t_{n}^{k-1},t_{n}^{k}}^{(m),j}$ ($j=1,2$). Suppose $%
\tilde{N}\in \mathbb{N}$ is chosen, and consider $f=|X|^{2\tilde{N}}$ where $%
X=X_{j}$ or $Y_{j}$ ($j=1,2$). In all these cases, $f$ is a polynomial of
Brownian motion path, and thus is smooth in the Malliavin sense. In
particular, $f\in \mathbb{D}_{N}^{q}$ for any $q\geq 1$ and $N\in \mathbb{N}$%
. We want to find an upper bound for the Sobolev norm $||f||_{q,N}$.

If $M\leq \tilde{N}$, we have%
\begin{equation}
D^{M}f=\sum_{\mu =1}^{M}\dsum\limits_{\substack{ \alpha _{1}+\cdots +\alpha
_{\mu }=M \\ 4\geq \alpha _{i}\geq 1}}C_{\alpha _{1}\cdots \alpha _{\mu
}}|X|^{2(\tilde{N}-\mu )}D^{\alpha _{1}}|X|^{2}\otimes \cdots \otimes
D^{\alpha _{\mu }}|X|^{2}  \label{4-17-9}
\end{equation}%
where $C_{\alpha _{1}\cdots \alpha _{\mu }}$ are constants depending only on 
$\alpha $'s, $j=1$ or $2$, $M$ and $\tilde{N}$. Therefore, by using H\"{o}%
lder's inequality, we have%
\begin{eqnarray}
||f||_{q,N} &\leq &C\sum_{M=0}^{N}\sum_{\mu =1}^{M}\dsum\limits_{\substack{ %
\alpha _{1}+\cdots +\alpha _{\mu }=M \\ 4\geq \alpha _{i}\geq 0}}|||X|^{2(%
\tilde{N}-\mu )}D^{\alpha _{1}}|X|^{2}\otimes \cdots \otimes D^{\alpha _{\mu
}}|X|^{2}||_{q}  \notag \\
&\leq &C\sum_{M=0}^{N}\sum_{\mu =1}^{M}\dsum\limits_{\substack{ a_{1}+\cdots
+a_{\mu }=M \\ 4\geq a_{i}\geq 0}}||X||_{4q(\tilde{N}-\mu )}^{2(\tilde{N}%
-\mu )}\prod_{i=1}^{\mu }||D^{\alpha _{i}}|X|^{2}||_{2\mu q}  \label{4-18-2}
\end{eqnarray}%
for some constant $C$ depending only on $N,\tilde{N}$, where the restriction
that $4\geq a_{i}\geq 0$ comes from the fact that $|X|^{2}$ is a polynomial
of Brownian motion of order at most four as $X=X_{j}$ or $Y_{j}$, so that $%
D^{a}|X|^{2}=0$ for $a\geq 5$. The inequality (\ref{4-18-2}), though
completely elementary, allows us to develop the necessary estimates for the
Sobolev norms we are interested.

\begin{lemma}
\label{lem-19-1}Suppose $X$ is a smooth Malliavin functional, and $D^{a}X=0$
for $a\geq 3$, then 
\begin{equation}
|D|X|^{2}|_{\boldsymbol{H}}\leq 2|X||DX|_{\boldsymbol{H}}\text{, }%
|D^{2}|X|^{2}|_{\boldsymbol{H}^{\otimes 2}}\leq 2|DX|_{\boldsymbol{H}%
}^{2}+2|X||D^{2}X|_{\boldsymbol{H}^{\otimes 2}}  \label{4-19-1}
\end{equation}%
and%
\begin{equation}
|D^{3}|X|^{2}|_{\boldsymbol{H}^{\otimes 3}}\leq 6|D^{2}X|_{\boldsymbol{H}%
^{\otimes 2}}|DX|_{\boldsymbol{H}}\text{, \ \ }|D^{4}|X|^{2}|_{\boldsymbol{H}%
^{\otimes 4}}\leq 6|D^{2}X|_{\boldsymbol{H}^{\otimes 2}}^{2}\text{.}
\label{4-19-2}
\end{equation}
\end{lemma}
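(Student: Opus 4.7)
The plan is to derive all four inequalities by iteratively applying the Leibniz (product) rule to $|X|^{2}=\sum_{i}(X^{i})^{2}$, discarding the summands that contain $D^{a}X$ for $a\ge 3$, and estimating the remaining tensor sums by Cauchy--Schwarz on the $\mathbb{R}^{d}$-index together with the triangle inequality in $H^{\otimes k}$. No probabilistic input is needed; the content is purely algebraic/combinatorial.

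First I would treat the two easy cases. One computes directly $D|X|^{2}=2\sum_{i}X^{i}\,DX^{i}\in H$, whose $H$-norm is bounded by $2|X|\,|DX|_{H}$ after applying Cauchy--Schwarz on the index $i$, using the convention $|DX|_{H}^{2}=\sum_{i}|DX^{i}|_{H}^{2}$. Differentiating once more gives $D^{2}|X|^{2}=2\sum_{i}DX^{i}\otimes DX^{i}+2\sum_{i}X^{i}\,D^{2}X^{i}$, and the two summands are bounded respectively by $2|DX|_{H}^{2}$ and $2|X|\,|D^{2}X|_{H^{\otimes 2}}$ by the same Cauchy--Schwarz argument, yielding the second inequality of \eqref{4-19-1}.

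For the third and fourth derivatives I would exploit the hypothesis $D^{a}X=0$ for $a\ge 3$, which kills any term in which a single factor $X^{i}$ is differentiated more than twice. Applying $D$ again to the expression for $D^{2}|X|^{2}$, all terms containing $D^{3}X$ vanish, leaving exactly three cross-terms in which one factor is $DX^{i}$ and the other is $D^{2}X^{i}$ (corresponding to the three choices of which of the three $H$-slots carries the single derivative); Cauchy--Schwarz on $i$ then gives $|D^{3}|X|^{2}|_{H^{\otimes 3}}\le 6\,|D^{2}X|_{H^{\otimes 2}}|DX|_{H}$. One more differentiation: every surviving summand must have two $D^{2}X$ factors (terms with a $D^{3}X$ factor vanish and there is no way to distribute four derivatives on two factors of $X$ otherwise), and a straightforward count shows exactly three such terms appear, each of norm at most $2|D^{2}X|_{H^{\otimes 2}}^{2}$, giving $|D^{4}|X|^{2}|_{H^{\otimes 4}}\le 6|D^{2}X|_{H^{\otimes 2}}^{2}$.

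The only delicate step is bookkeeping: one must be careful that the tensorial Cauchy--Schwarz inequality $\bigl|\sum_{i}a^{i}\otimes b^{i}\bigr|_{H^{\otimes k}}\le \bigl(\sum_{i}|a^{i}|_{H^{\otimes k_{1}}}^{2}\bigr)^{1/2}\bigl(\sum_{i}|b^{i}|_{H^{\otimes k_{2}}}^{2}\bigr)^{1/2}$ is applied with the same convention $|D^{j}X|_{H^{\otimes j}}^{2}=\sum_{i}|D^{j}X^{i}|_{H^{\otimes j}}^{2}$ used elsewhere in the paper, and that the combinatorial factors ($1,2,3,3$ for the successive permutation counts) produce the constants $2,2,6,6$ stated in \eqref{4-19-1}--\eqref{4-19-2}. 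I do not anticipate any real obstacle here; once the convention is fixed, each inequality is a one-line consequence of the Leibniz expansion.
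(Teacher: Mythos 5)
Your proposal is correct and takes essentially the same route as the paper, which simply states that the estimates ``follow from the chain rule directly''; you have spelled out the Leibniz expansion of $D^{k}|X|^{2}=\sum_{i}D^{k}\bigl((X^{i})^{2}\bigr)$, used $D^{a}X=0$ for $a\ge 3$ to eliminate higher-order terms, and applied Cauchy--Schwarz over the index $i$ together with the tensor-norm bound, which is exactly what ``chain rule directly'' encodes. The bookkeeping of the combinatorial factors ($2,2,6,6$) and the convention $|D^{j}X|_{\boldsymbol{H}^{\otimes j}}^{2}=\sum_{i}|D^{j}X^{i}|_{\boldsymbol{H}^{\otimes j}}^{2}$ are both handled correctly.
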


\begin{proof}
These estimates follows from the chain rule directly.
\end{proof}

\begin{lemma}
Let $m,n\in N$, and $k=1,\cdots ,2^{n}$. Let\ $%
Y_{j}=w_{t_{n}^{k-1},t_{n}^{k}}^{(m),j}$ and $%
X_{j}=w_{t_{n}^{k-1},t_{n}^{k}}^{(m+1),j}-w_{t_{n}^{k-1},t_{n}^{k}}^{(m),j}$
\ ($j=1,2$). Then, there is a constant $C$ depending only on $d$, such that
for any $q\geq 1$%
\begin{equation}
\left\Vert D^{a}|Y_{1}|^{2}\right\Vert _{q}\leq \left\{ 
\begin{array}{cc}
C\sqrt{q}\frac{2^{m}}{2^{2n}} & \text{ \ if \ }n>m\text{,} \\ 
C\sqrt{q}\frac{1}{2^{n}} & \text{if }n\leq m\text{,}%
\end{array}%
\right. \text{ }  \label{4-19-3}
\end{equation}%
\begin{equation}
\left\Vert D^{a}|X_{1}|^{2}\right\Vert _{q}\leq \left\{ 
\begin{array}{cc}
C\sqrt{q}\frac{2^{m}}{2^{2n}} & \text{ \ if \ }n>m\text{,} \\ 
0 & \text{if }n\leq m%
\end{array}%
\right.  \label{4-19-4}
\end{equation}%
and%
\begin{equation}
\left\Vert D^{b}|X_{2}|^{2}\right\Vert _{q}\leq \left\{ 
\begin{array}{cc}
C\sqrt{q}^{4-b}\frac{2^{2m}}{2^{4n}}\text{\ \ } & \text{\ \ for }n\geq m%
\text{, } \\ 
C\sqrt{q}^{4-b}\frac{1}{2^{m+n}} & \text{for \ }n<m%
\end{array}%
\right. \text{ \ \ \ }  \label{4-20-7}
\end{equation}%
where $a=1,2$ and $b=1,2,3,4$.
\end{lemma}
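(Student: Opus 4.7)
The plan is to apply Lemma \ref{lem-19-1} (the chain-rule bounds for $D^a|X|^2$ in terms of $|X|, |DX|_{\boldsymbol{H}}, |D^2X|_{\boldsymbol{H}^{\otimes 2}}$), then pass to $L^q$ norms by Cauchy--Schwarz (splitting the powers at $2q$), and finally substitute the already-established moment and gradient bounds from Lemma \ref{lem1a} and Lemma \ref{le6}. Since $Y_1$ and $X_1$ are linear combinations of Brownian increments, the chain $D^aY_1 = D^aX_1 = 0$ for $a\ge 2$ trims Lemma \ref{lem-19-1} down to $|D|X|^2|_{\boldsymbol{H}} \le 2|X||DX|_{\boldsymbol{H}}$ and $|D^2|X|^2|_{\boldsymbol{H}^{\otimes 2}} \le 2|DX|_{\boldsymbol{H}}^{2}$. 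Similarly, $X_2$ is a polynomial of degree two in Brownian increments, so $D^aX_2 = 0$ for $a\ge 3$, and the bounds in Lemma \ref{lem-19-1} for $D^3|X|^2$ and $D^4|X|^2$ become available.

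For the $Y_1$ and $X_1$ bounds (\ref{4-19-3}) and (\ref{4-19-4}), I would apply Cauchy--Schwarz to write $\|D|X|^2\|_q\le 2\|X\|_{2q}\,|DX|_{\boldsymbol{H}}$ and $\|D^2|X|^2\|_q\le 2|DX|_{\boldsymbol{H}}^{2}$, using that $DY_1$ and $DX_1$ are deterministic (characteristic-function-valued, so their $\boldsymbol{H}$-norms do not depend on $w$). For $Y_1$ one has $|DY_1|_{\boldsymbol{H}}=\sqrt{d/2^n}$ when $n<m$ and $|DY_1|_{\boldsymbol{H}}=\sqrt{d\cdot 2^m/2^{2n}}$ when $n\ge m$, while $\|Y_1\|_{2q}\le C\sqrt{q}/\sqrt{2^n}$ or $C\sqrt{q}\sqrt{2^m}/2^n$ respectively by (\ref{m27-4}); multiplying these gives the $\sqrt{q}$ scaling on the right side of (\ref{4-19-3}). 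The case of $X_1$ is identical after invoking the bound $|DX_1|_{\boldsymbol{H}}\le C\sqrt{2^m/2^n}\sqrt{1/2^n}$ from (\ref{m23-1}) and the moment bound (\ref{m22-4}), with the cases $n\le m$ being trivial since $X_1\equiv 0$.

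For the $X_2$ bounds (\ref{4-20-7}), I would write out each case using
\begin{align*}
\|D|X_2|^{2}\|_q &\le 2\|X_2\|_{2q}\,\|DX_2\|_{2q},\\
\|D^2|X_2|^{2}\|_q &\le 2\|DX_2\|_{2q}^{2}+2\|X_2\|_{2q}\,\|D^2X_2\|_{2q},\\
\|D^3|X_2|^{2}\|_q &\le 6\|D^2X_2\|_{2q}\,\|DX_2\|_{2q},\\
\|D^4|X_2|^{2}\|_q &\le 6\|D^2X_2\|_{2q}^{2},
\end{align*}
and then substitute the moment bound (\ref{m22-05}) for $\|X_2\|_{2q}$ together with the gradient bound (\ref{m22-6}) for $\|D^aX_2\|_{2q}$ with $a=1,2$. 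In both regimes $n\le m$ and $n>m$ the spatial scales $\sqrt{1/2^{m+n}}$ (resp.\ $\sqrt{2^{3m}/2^{3n}}\sqrt{1/2^{m+n}}$) combine to exactly $1/2^{m+n}$ (resp.\ $2^{2m}/2^{4n}$), while the $q$-scaling collapses to $\sqrt{q}^{4-b}$ because each use of $\|X_2\|_{2q}$ or $\|DX_2\|_{2q}$ contributes one factor of $\sqrt{q}$, whereas $\|D^2X_2\|_{2q}$ contributes none. The number of $\sqrt{q}$ factors equals the number of $X_2$'s and $DX_2$'s appearing in the bound for $D^b|X_2|^2$, which is precisely $4-b$.

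No genuine obstacle is expected: the only subtlety is that Cauchy--Schwarz must be used at level $2q$ (not $q$), otherwise the $\sqrt{q}^{4-b}$ tracking would be spoiled, and one must be careful that all the scaling powers $2^{3m-3n}\cdot 2^{-m-n}$, $2^{-m-n}$, etc.\ collapse correctly into the stated forms $2^{2m}/2^{4n}$ or $1/2^{m+n}$. Otherwise this is a purely mechanical combination of Lemma \ref{lem-19-1} with the estimates already proved in Lemmas \ref{lem1a} and \ref{le6}.
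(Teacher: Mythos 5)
Your plan matches the paper's proof: both apply the chain-rule bounds of Lemma \ref{lem-19-1}, pass to $L^q$-norms via Cauchy--Schwarz (using that $DY_1$, $DX_1$ and $D^2X_2$ are deterministic, so no H\"older loss occurs there), and then substitute the moment estimates from Lemmas \ref{lem1a} and \ref{le6}. The spatial-scale bookkeeping you describe ($2^{3m-3n}\cdot 2^{-m-n}$ collapsing to $2^{2m}/2^{4n}$, etc.) is exactly what makes (\ref{4-20-7}) come out, and is what the paper does implicitly.

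One small slip in your heuristic for the $q$-exponent: you write that ``each use of $\|X_2\|_{2q}$ or $\|DX_2\|_{2q}$ contributes one factor of $\sqrt{q}$.'' That is not quite right, and taken literally it contradicts your own claim that the count is $4-b$ (for $b=1$ it gives $2$, not $3$). The correct accounting follows the Wiener chaos degree: $X_2$ lies in the second chaos so $\|X_2\|_{2q}\lesssim q$, i.e.\ contributes $\sqrt{q}^{\,2}$; $DX_2$ is first chaos so $\|DX_2\|_{2q}\lesssim\sqrt{q}$, one factor of $\sqrt{q}$; and $D^2X_2$ is deterministic, contributing none. With that rule every term in the chain-rule expansion for $D^b|X_2|^2$ contributes exactly $\sqrt{q}^{\,4-b}$ (e.g.\ for $b=1$, $|X_2||DX_2|\mapsto q\cdot\sqrt{q}=\sqrt{q}^{\,3}$; for $b=2$, both $|DX_2|^2$ and $|X_2||D^2X_2|$ give $q$), which is what (\ref{4-20-7}) asserts. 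The arithmetic you would actually do in the write-up gives the right answer; only the stated counting rule needs the correction.
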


\begin{proof}
If $n\geq m$, then $Y_{1}=\frac{2^{m}}{2^{n}}\xi _{m}^{k(n,m)}$, $DY_{1}=%
\frac{2^{m}}{2^{n}}\boldsymbol{1}_{J_{m}^{k(n,m)}}$ and $D^{2}Y_{1}=0$, so
that 
\begin{equation}
|D|Y_{1}|^{2}|_{\boldsymbol{H}}\leq 2|Y_{1}||DY_{1}|_{\boldsymbol{H}}=2\frac{%
2^{m}}{2^{n}}\frac{2^{m}}{2^{n}}\sqrt{\frac{1}{2^{m}}}|\xi _{m}^{k(n,m)}|
\label{4-17-12}
\end{equation}%
which yields%
\begin{equation*}
\left\Vert D|Y_{1}|^{2}\right\Vert _{q}\leq C\sqrt{q}\frac{2^{m}}{2^{2n}}
\end{equation*}%
where $C$ depends only on $d$. Similarly%
\begin{equation*}
|D^{2}|Y_{1}|^{2}|_{\boldsymbol{H}^{\otimes 2}}\leq 2|DY_{1}|_{\boldsymbol{H}%
}^{2}=2\frac{2^{2m}}{2^{2n}}\boldsymbol{1}_{J_{m}^{k(n,m)}}
\end{equation*}%
so that 
\begin{equation*}
\left\Vert D^{2}\left\vert Y_{1}\right\vert ^{2}\right\Vert _{q}\leq 2\frac{%
2^{m}}{2^{2n}}\text{.}
\end{equation*}%
If $n<m$, then $Y_{1}=\xi _{n}^{k}$ so that $DY_{1}=\boldsymbol{1}%
_{J_{n}^{k}}$, hence 
\begin{equation*}
\left\Vert D|Y_{1}|^{2}\right\Vert _{q}\leq C\sqrt{q}\frac{1}{2^{n}}\text{,
\ }\left\Vert D^{2}|Y_{1}|^{2}\right\Vert _{q}\leq C\frac{1}{2^{n}}
\end{equation*}%
where $C$ depends only on $d$. This proves (\ref{4-19-3}). (\ref{4-19-4})
follows (\ref{4-19-3}) and the fact that $X_{1}=0$ if $n\leq m$.

Together with Lemma \ref{lem-19-1} and the $L^{q}$-estimate (\ref{m22-05})
for $X_{2}$, we can conclude that there is a constant $C$ depending only on $%
d$ such that%
\begin{equation}
\left\Vert D^{\alpha }|X_{2}|^{2}\right\Vert _{q}\leq C\sqrt{q}^{4-\alpha }%
\frac{1}{2^{m+n}}\text{ \ \ \ }\forall n<m  \label{4-15-2}
\end{equation}%
for $\alpha =1,2,3,4$ and $q\geq 1$.

Now consider the case that $n>m$. In this case%
\begin{equation*}
Y_{2}=w_{t_{n}^{k-1},t_{n}^{k}}^{(m),2}=\frac{1}{2}\frac{2^{2m}}{2^{2n}}\xi
_{m}^{k(n,m)}\otimes \xi _{m}^{k(n,m)}\text{ }
\end{equation*}%
so that 
\begin{equation*}
DY_{2}=\frac{1}{2}\frac{2^{2m}}{2^{2n}}\{\boldsymbol{1}_{J_{m}^{k(n,m)}},\xi
_{m}^{k(n,m)}\}\text{, }D^{2}Y_{2}=\frac{1}{2}\frac{2^{2m}}{2^{2n}}\{%
\boldsymbol{1}_{J_{m}^{k(n,m)}},\boldsymbol{1}_{J_{m}^{k(n,m)}}\}\text{.}
\end{equation*}%
where 
\begin{equation*}
\{\boldsymbol{1}_{J_{m}^{k(n,m)}},\boldsymbol{1}_{J_{m}^{k(n,m)}}%
\}(t_{1},t_{2})=\{\boldsymbol{1}_{J_{m}^{k(n,m)}}(t_{1}),\boldsymbol{1}%
_{J_{m}^{k(n,m)}}(t_{2})\}\text{.}
\end{equation*}%
It follows that 
\begin{equation*}
||D^{b}|Y_{2}|^{2}||_{q}\leq Cq^{4-b}\frac{2^{2m}}{2^{4n}}\text{ \ \ \ \ for 
}n>m\text{, }b=1,2,3,4\text{.}
\end{equation*}%
and therefore (\ref{4-20-7}).
\end{proof}

In what follows we assume that $\tilde{N}\in \mathbb{N}$ and $N\leq \tilde{N}
$. Let%
\begin{equation}
f_{m,n,k}^{j}(w)=\left\vert w_{\frac{k-1}{2^{n}},\frac{k}{2^{n}}}^{\left(
m+1\right) ,j}-w_{\frac{k-1}{2^{n}},\frac{k}{2^{n}}}^{\left( m\right)
,j}\right\vert ^{2\tilde{N}}\text{,  }g_{m,n,k}^{j}(w)=\left\vert w_{\frac{%
k-1}{2^{n}},\frac{k}{2^{n}}}^{\left( m\right) ,j}\right\vert ^{2\tilde{N}}%
\text{.}  \label{4-22-1}
\end{equation}

\begin{lemma}
There is a constant $C$ depending only on $N,\tilde{N}$ and $d$ such that 
\begin{equation}
||f_{m,n,k}^{1}||_{q,N}\leq Cq^{\tilde{N}}\left( \frac{2^{m}}{2^{2n}}\right)
^{\tilde{N}}\text{ \ \ \ \ for }n>m  \label{4-17-11}
\end{equation}%
and%
\begin{equation}
||g_{m,n,k}^{1}||_{q,N}\leq \left\{ 
\begin{array}{cc}
Cq^{\tilde{N}}\left( \frac{2^{m}}{2^{2n}}\right) ^{\tilde{N}} & \text{ \ \
for }n>m \\ 
Cq^{\tilde{N}}\left( \frac{1}{2^{n}}\right) ^{\tilde{N}}\text{ \ \ \ } & 
\text{\ \ for }n\leq m%
\end{array}%
\right. \text{ \ \ \ \ }  \label{4-17-13}
\end{equation}%
for all $q\geq 1$.
\end{lemma}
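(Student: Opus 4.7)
The plan is to apply the general chain-rule inequality (4.18) with $X=X_1$ (respectively $X=Y_1$), and then insert the $L^{q}$ bounds for $X$ already at hand from Lemma \ref{lem1a} together with the $L^{q}$ bounds for the derivatives $D^{a}|X_{1}|^{2}$ and $D^{a}|Y_{1}|^{2}$ from the preceding unnumbered lemma (equations (\ref{4-19-3}) and (\ref{4-19-4})). The point of using (4.18) is precisely that $|X|^{2}$ is a polynomial of degree at most two in Brownian increments, hence $D^{a}|X|^{2}=0$ for $a\geq 3$, so only finitely many terms survive and the sum is a clean multinomial.

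First I would write, for any $M\leq N\leq\tilde N$,
\begin{equation*}
\|f^{1}_{m,n,k}\|_{q,N}\leq C\sum_{M=0}^{N}\sum_{\mu=1}^{M}\sum_{\substack{a_{1}+\cdots+a_{\mu}=M\\ 2\geq a_{i}\geq 1}}\|X_{1}\|_{4q(\tilde N-\mu)}^{2(\tilde N-\mu)}\prod_{i=1}^{\mu}\|D^{a_{i}}|X_{1}|^{2}\|_{2\mu q},
\end{equation*}
with the same inequality for $g^{1}_{m,n,k}$ with $X_{1}$ replaced by $Y_{1}$. Note that in the case at hand only $a_i\in\{1,2\}$ contribute because $D^a X_1=0$ for $a\ge 3$.

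Second, for $f^{1}_{m,n,k}$ one only needs the regime $n>m$ (since $X_{1}=0$ when $n\leq m$); substituting $\|X_{1}\|_{q}\leq C\sqrt{q}\sqrt{2^{m}/2^{2n}}$ from (\ref{m22-4}) and $\|D^{a}|X_{1}|^{2}\|_{q}\leq C\sqrt{q}\,(2^{m}/2^{2n})$ from (\ref{4-19-4}) gives, for each $(M,\mu)$ term, a factor $(2^{m}/2^{2n})^{\tilde N-\mu}\cdot (2^{m}/2^{2n})^{\mu}=(2^{m}/2^{2n})^{\tilde N}$ and $q$-powers $q^{\tilde N-\mu}\cdot q^{\mu/2}\leq q^{\tilde N}$ (recall $q\ge 1$), which assembles into $Cq^{\tilde N}(2^{m}/2^{2n})^{\tilde N}$, yielding (\ref{4-17-11}). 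For $g^{1}_{m,n,k}$ the same bookkeeping works with the two-regime inputs $\|Y_{1}\|_{q}$ and $\|D^{a}|Y_{1}|^{2}\|_{q}$ from (\ref{m27-4}) and (\ref{4-19-3}): in the regime $n\leq m$ the common scale is $1/2^{n}$, and in $n>m$ it is $2^{m}/2^{2n}$; in each case the $\tilde N-\mu$ factors from $\|Y_1\|$ and the $\mu$ factors from $\|D^{a_i}|Y_1|^{2}\|$ multiply to $\tilde N$-th power of the same scale, producing (\ref{4-17-13}).

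The main technical care needed is just the Hölder bookkeeping: one must choose the exponents $4q(\tilde N-\mu)$ and $2\mu q$ consistently so that $\frac{2(\tilde N-\mu)}{4q(\tilde N-\mu)}+\frac{\mu}{2\mu q}=\frac{1}{q}$, and then verify that the $q$-dependence collapses uniformly to $q^{\tilde N}$ (this uses $q\ge 1$ to absorb the factor $q^{\tilde N-\mu/2}\le q^{\tilde N}$). Multiplicative constants accumulate over the finitely many choices of $(M,\mu,a_{1},\ldots,a_{\mu})$ and are absorbed into the constant $C$ depending only on $N,\tilde N,d$. No further probabilistic estimate is needed beyond those already established in Lemmas \ref{lem1a} and the preceding derivative lemma.
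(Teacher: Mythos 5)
Your proposal is correct and follows essentially the same route as the paper: the paper also applies (\ref{4-18-2}) with $X=X_{1}$ (or $Y_{1}$), inserts the $L^{q}$ bounds from Lemma \ref{lem1a} together with (\ref{4-19-3}) and (\ref{4-19-4}), and observes that the $\tilde N-\mu$ factors from $\|X\|$ and the $\mu$ factors from $\|D^{a_i}|X|^{2}\|$ multiply to the $\tilde N$-th power of the common scale ($2^{m}/2^{2n}$ for $n>m$, $1/2^{n}$ for $n\le m$), with the $q$-dependence collapsing to $q^{\tilde N}$ since $q\ge 1$. The observation that only $a_i\in\{1,2\}$ contribute because $|X_1|^2$ and $|Y_1|^2$ are quadratic in Brownian increments is exactly what the paper uses.
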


\begin{proof}
If $X_{1}=w_{\frac{k-1}{2^{n}},\frac{k}{2^{n}}}^{\left( m+1\right) ,1}-w_{%
\frac{k-1}{2^{n}},\frac{k}{2^{n}}}^{\left( m\right) ,1}$ (for $n>m$,
otherwise $X_{1}=0$). By (\ref{4-18-2})%
\begin{eqnarray*}
||f_{m,n,k}^{1}||_{q,N} &\leq &C\sum_{M=0}^{N}\sum_{\mu =1}^{M}\dsum\limits 
_{\substack{ a_{1}+\cdots +a_{\mu }=M  \\ 2\geq a_{i}\geq 0}}||X_{1}||_{4q(%
\tilde{N}-\mu )}^{2(\tilde{N}-\mu )}\prod_{i=1}^{\mu }||D^{\alpha
_{i}}|X_{1}|^{2}||_{2\mu q} \\
&\leq &C\sum_{M=0}^{N}\sum_{\mu =1}^{M}\dsum\limits_{\substack{ a_{1}+\cdots
+a_{\mu }=M  \\ 4\geq a_{i}\geq 0}}\left( \sqrt{q}\frac{2^{m}}{2^{2n}}%
\right) ^{\mu }\left( q\frac{2^{m}}{2^{2n}}\right) ^{(\tilde{N}-\mu )} \\
&\leq &Cq^{\tilde{N}}\left( \frac{2^{m}}{2^{2n}}\right) ^{\tilde{N}}\text{ \
\ for }n>m\text{. }
\end{eqnarray*}%
The same estimate remains true in the case that $Y_{1}=w_{\frac{k-1}{2^{n}},%
\frac{k}{2^{n}}}^{\left( m\right) ,1}$ and $n>m$. On the other hand, if $%
n\leq m$, then 
\begin{eqnarray*}
||g_{m,n,k}^{1}||_{q,N} &\leq &C\sum_{M=0}^{N}\sum_{\mu =1}^{M}\dsum\limits 
_{\substack{ a_{1}+\cdots +a_{\mu }=M  \\ 2\geq a_{i}\geq 0}}||Y_{1}||_{4q(%
\tilde{N}-\mu )}^{2(\tilde{N}-\mu )}\prod_{i=1}^{\mu }||D^{\alpha
_{i}}|Y_{1}|^{2}||_{2\mu q} \\
&\leq &Cq^{\tilde{N}}\sum_{M=0}^{N}\sum_{\mu =1}^{M}\dsum\limits_{\substack{ %
a_{1}+\cdots +a_{\mu }=M  \\ 2\geq a_{i}\geq 0}}\left( \frac{1}{2^{n}}%
\right) ^{\mu }\left( \frac{1}{2^{n}}\right) ^{(\tilde{N}-\mu )} \\
&=&Cq^{\tilde{N}}\left( \frac{1}{2^{n}}\right) ^{\tilde{N}}\text{ \ \ \ \ \
for }n\leq m\text{. }
\end{eqnarray*}
\end{proof}

\begin{lemma}
There is a constant $C$ depending only on $N,\tilde{N},d$ such that%
\begin{equation}
||f_{m,n,k}^{2}||_{q,N}\leq \left\{ 
\begin{array}{cc}
C\sqrt{q}^{4\tilde{N}}\left( \frac{1}{2^{n}}\right) ^{\tilde{N}}\left( \frac{%
1}{2^{m}}\right) ^{\tilde{N}} & \text{ \ \ for }n\leq m\text{,} \\ 
Cq^{4\tilde{N}}\left( \frac{2^{m}}{2^{2n}}\right) ^{2\tilde{N}} & \text{ \
for }n>m%
\end{array}%
\right. \text{.}  \label{4-15-1}
\end{equation}%
for all $q\geq 1$.
\end{lemma}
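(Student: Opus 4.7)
The proof is structurally identical to the derivation of the preceding estimate \eqref{4-17-11} for $f_{m,n,k}^{1}$, but with the first-level increment $X_1$ replaced throughout by the second-level increment $X_2 = w^{(m+1),2}_{t_n^{k-1},t_n^k} - w^{(m),2}_{t_n^{k-1},t_n^k}$. I start from the general Sobolev expansion \eqref{4-18-2} applied to $f = |X_2|^{2\tilde N}$:
\begin{equation*}
\|f_{m,n,k}^{2}\|_{q,N} \leq C \sum_{M=0}^{N} \sum_{\mu=1}^{M} \sum_{\substack{\alpha_1+\cdots+\alpha_\mu=M \\ 4 \geq \alpha_i \geq 1}} \|X_2\|_{4q(\tilde N - \mu)}^{2(\tilde N - \mu)} \prod_{i=1}^{\mu} \|D^{\alpha_i}|X_2|^2\|_{2\mu q}.
\end{equation*}
The cap $\alpha_i \leq 4$ is legitimate because in both regimes $n\leq m$ and $n>m$, $X_2$ is itself a quadratic polynomial in Gaussian increments (via \eqref{i-as1} or \eqref{m19.02}), so $|X_2|^2$ is a polynomial of total degree at most $4$ and its Malliavin derivatives of order $\geq 5$ vanish.

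Next I substitute the two $L^q$-bounds on $X_2$ and on $D^b|X_2|^2$ already recorded in Section~3.1. From \eqref{m22-05} one has $\|X_2\|_{q'} \leq C q' \,\epsilon_{m,n}$, with
\begin{equation*}
\epsilon_{m,n} := \begin{cases} 1/\sqrt{2^{m+n}} & \text{for } n \leq m, \\ 2^m/2^{2n} & \text{for } n > m, \end{cases}
\end{equation*}
and from \eqref{4-20-7} one has $\|D^b|X_2|^2\|_{q'} \leq C \sqrt{q'}^{\,4-b}\,\epsilon_{m,n}^2$ for $b=1,\ldots,4$. Feeding these into the expansion and absorbing the $N$- and $\tilde N$-dependent prefactors into $C$, the spatial contribution of a single term collapses to $\epsilon_{m,n}^{2(\tilde N-\mu)} \cdot (\epsilon_{m,n}^{2})^{\mu} = \epsilon_{m,n}^{2\tilde N}$, \emph{independent} of the multi-index $(\alpha_1,\ldots,\alpha_\mu)$ and of $\mu$. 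The $q$-exponent evaluates to $q^{2(\tilde N-\mu)} \cdot \sqrt q^{\,4\mu - M} = q^{2\tilde N - M/2} \leq q^{2\tilde N}$ since $q \geq 1$. Raising $\epsilon_{m,n}$ to the power $2\tilde N$ then yields $(1/2^{m+n})^{\tilde N} = (1/2^n)^{\tilde N}(1/2^m)^{\tilde N}$ if $n\leq m$ and $(2^m/2^{2n})^{2\tilde N}$ if $n>m$, matching the two cases in the statement.

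The sole obstacle is the combinatorial bookkeeping of $q$-exponents and spatial scales; nothing new is needed beyond the pointwise chain-rule estimates of Lemma~\ref{lem-19-1} and the $L^q$-bounds of the previous subsection. The apparent asymmetry in the claimed $q$-exponent between the two cases (namely $\sqrt q^{\,4\tilde N}$ for $n\leq m$ versus $q^{4\tilde N}$ for $n>m$) is cosmetic: the bookkeeping above produces $q^{2\tilde N}$ uniformly, which majorizes each. With this estimate in hand, all ingredients needed to extend the $(q,1)$-capacity bound of Theorem~\ref{th8} to the $(q,N)$-setting via \eqref{4-22-4a} will be in place, completing the proof of Theorem~\ref{th5}.
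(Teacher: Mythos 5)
Your proof is correct and follows essentially the same route as the paper: expand via \eqref{4-18-2}, feed in the $L^q$-bounds on $X_2$ from \eqref{m22-05} and on $D^b|X_2|^2$ from \eqref{4-20-7}, and observe that the spatial factors telescope to $\epsilon_{m,n}^{2\tilde N}$ in both regimes. Your $q$-bookkeeping is in fact slightly sharper than the paper's (which uses the crude $q^2$ in place of $\sqrt{q}^{\,4-b}$), and you are right that the $q^{4\tilde N}$ versus $\sqrt{q}^{\,4\tilde N}$ discrepancy in the statement's two cases is immaterial, since the argument yields $q^{2\tilde N}$ uniformly.
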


\begin{proof}
Let $X_{2}=w_{\frac{k-1}{2^{n}},\frac{k}{2^{n}}}^{\left( m+1\right) ,j}-w_{%
\frac{k-1}{2^{n}},\frac{k}{2^{n}}}^{\left( m\right) ,j}$. By (\ref{4-18-2})
and the $L^{q}$-bounds of $X_{2}$ applying to $4q(\tilde{N}-\mu )$ and (\ref%
{4-20-7}) 
\begin{equation*}
\left\Vert X_{2}\right\Vert _{4q(\tilde{N}-\mu )}\leq \left\{ 
\begin{array}{cc}
Cq\frac{2^{m}}{2^{2n}} & \text{\ if }n>m\text{ \ \ } \\ 
\text{ \ }Cq\sqrt{\frac{1}{2^{m+n}}}\text{\ \ \ } & \text{ if }n\leq m\text{,%
}%
\end{array}%
\right.
\end{equation*}%
and%
\begin{equation*}
\left\Vert D^{b}|X_{2}|^{2}\right\Vert _{2\mu q}\leq \left\{ 
\begin{array}{cc}
Cq^{2}\frac{2^{2m}}{2^{4n}}\text{\ \ } & \text{\ \ for }n>m\text{ ,} \\ 
Cq^{2}\frac{1}{2^{m+n}} & \text{for \ }n\leq m\text{,}%
\end{array}%
\right. \text{ \ \ \ }
\end{equation*}
we obtain, for $n\leq m$,%
\begin{eqnarray*}
||f_{m,n,k}^{2}||_{q,N} &\leq &C\sum_{M=0}^{N}\sum_{\mu =1}^{M}\dsum\limits 
_{\substack{ a_{1}+\cdots +a_{\mu }=M  \\ 4\geq a_{i}\geq 0}}\left( q\sqrt{%
\frac{1}{2^{m+n}}}\right) ^{2(\tilde{N}-\mu )}\left( q^{2}\frac{1}{2^{m+n}}%
\right) ^{\mu } \\
&\leq &C\sqrt{q}^{4\tilde{N}}\left( \frac{1}{2^{m+n}}\right) ^{\tilde{N}}%
\text{.}
\end{eqnarray*}

Similarly, if $n>m$ then%
\begin{eqnarray*}
||f_{m,n,k}^{2}||_{q,N} &\leq &C\sum_{M=0}^{N}\sum_{\mu =1}^{M}\dsum\limits 
_{\substack{ a_{1}+\cdots +a_{\mu }=M  \\ 4\geq a_{i}\geq 0}}\left( q\frac{%
2^{m}}{2^{2n}}\right) ^{2(\tilde{N}-\mu )}\left( q^{2}\frac{2^{2m}}{2^{4n}}%
\right) ^{\mu } \\
&\leq &C\sqrt{q}^{4\tilde{N}}\left( \frac{2^{m}}{2^{2n}}\right) ^{2\tilde{N}}%
\text{.}
\end{eqnarray*}
\end{proof}

\begin{proposition}
\label{j2.1}Choose $\tilde{N}\in \mathbb{N}$ and $\theta ,\beta >0$ such that%
\begin{equation}
p-2-\frac{p}{\tilde{N}}>0\text{, }\beta +\theta <\frac{p-2-\frac{p}{\tilde{N}%
}}{2}  \label{4-24-1}
\end{equation}%
Then for any $N\leq \tilde{N}$ there is a constant $C$ depending only on $N,%
\tilde{N},\theta ,\beta ,d$ and $q\geq 1$ such that%
\begin{equation}
\text{\textit{Cap}}_{q,N}\left\{ \sum_{k=1}^{2^{n}}\left\vert
w_{t_{n}^{k-1},t_{n}^{k}}^{(m+1),j}-w_{t_{n}^{k-1},t_{n}^{k}}^{(m),j}\right%
\vert ^{\frac{p}{j}}>C_{\theta }\left( \frac{1}{2^{m}}\right) ^{\beta
}\left( \frac{1}{2^{n}}\right) ^{\theta }\right\} \leq C\left( \frac{1}{%
2^{\max \{m,n\}}}\right) ^{\varepsilon _{j}}  \label{4-17-15}
\end{equation}%
for $n,m\in \mathbb{N}$, $k=1,\cdots ,2^{n}$, where%
\begin{equation}
\varepsilon _{j}=\left[ \frac{p-2}{2}-\left( \theta +\beta \right) \right] 
\frac{2j\tilde{N}}{p}-1\text{, \ \ }j=1,2\text{.}  \label{4-21-5}
\end{equation}
\end{proposition}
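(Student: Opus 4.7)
The approach is to apply the capacity maximal inequality (\ref{m29-5}) together with sub-additivity of capacity, after raising $|X_{j}|^{p/j}$ to an integer power so that the resulting functional is Malliavin-smooth.

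First, sub-additivity. If $\sum_{k=1}^{2^{n}}a_{k}>A$ then $a_{k}>A/2^{n}$ for some $k$. Writing $X_{j}=w_{t_{n}^{k-1},t_{n}^{k}}^{(m+1),j}-w_{t_{n}^{k-1},t_{n}^{k}}^{(m),j}$ and using sub-additivity of $\text{\textit{Cap}}_{q,N}$, the capacity on the left is bounded by
\begin{equation*}
\sum_{k=1}^{2^{n}}\text{\textit{Cap}}_{q,N}\bigl\{|X_{j}|^{p/j}>\lambda'\bigr\},\qquad \lambda':=C_{\theta}\Bigl(\tfrac{1}{2^{m}}\Bigr)^{\beta}\Bigl(\tfrac{1}{2^{n}}\Bigr)^{\theta+1}.
\end{equation*}
Second, passage to an integer power. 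Since $\tilde{N}\in\mathbb{N}$, the event $\{|X_{j}|^{p/j}>\lambda'\}$ equals $\{f_{m,n,k}^{j}>(\lambda')^{2\tilde{N}j/p}\}$ with $f_{m,n,k}^{j}$ defined in (\ref{4-22-1}). The latter is a polynomial in the dyadic Brownian increments, hence belongs to $\mathbb{D}_{N}^{q}$ for every $q,N$ and is continuous, so the capacity maximal inequality (\ref{m29-5}) applies and yields
\begin{equation*}
\text{\textit{Cap}}_{q,N}\bigl\{f_{m,n,k}^{j}>(\lambda')^{2\tilde{N}j/p}\bigr\}\le C_{q,N}(\lambda')^{-2\tilde{N}j/p}\|f_{m,n,k}^{j}\|_{q,N}.
\end{equation*}

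Third, Sobolev-norm substitution. Insert the bounds (\ref{4-17-11}) for $j=1$ and (\ref{4-15-1}) for $j=2$, and sum over $k=1,\dots,2^{n}$ (contributing a factor $2^{n}$). For $j=1$ the sum vanishes unless $n>m$, so $\max\{m,n\}=n$; for $j=2$ one treats $n\le m$ and $n>m$ separately. In each case the exponents of $2^{m}$ and $2^{n}$ can be collected, and a short computation shows that the total exponent of $2$ equals $-\varepsilon_{j}\cdot\max\{m,n\}$ with $\varepsilon_{j}$ defined by (\ref{4-21-5}). For the mixed case $j=2$, $n\le m$, one uses that for $p\in(2,3)$ the resulting $n$-coefficient is strictly positive (because $p<4(1+\theta)$ for any $\theta>0$), which permits replacing $n$ by $m$ to absorb the $n$-dependence into the desired $m$-bound.

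Finally, the positivity of $\varepsilon_{j}$. The inequality $\varepsilon_{1}>0$ rewrites as $\theta+\beta<(p-2-p/\tilde{N})/2$, which is precisely the second half of (\ref{4-24-1}); the first half $p-2-p/\tilde{N}>0$ guarantees the existence of admissible $\theta,\beta>0$. Since $\varepsilon_{2}=2\varepsilon_{1}+1$, the case $j=2$ is automatic. The main obstacle is the tension between the two competing requirements on $\tilde{N}$: it must be an integer (so that $f_{m,n,k}^{j}$ is a polynomial and hence lies in all Sobolev spaces required by the capacity maximal inequality), yet also large enough that $\varepsilon_{j}>0$ for some $\theta,\beta>0$. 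The hypothesis (\ref{4-24-1}) is exactly the quantitative statement that both demands can be met simultaneously.
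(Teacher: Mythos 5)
Your proof is correct and follows essentially the same route as the paper: sub-additivity over $k$ reduces to a single dyadic-increment event, passing to the polynomial $f_{m,n,k}^{j}=|X_{j}|^{2\tilde{N}}$ makes the capacity maximal inequality (\ref{m29-5}) applicable, and substituting the Sobolev bounds (\ref{4-17-11}) and (\ref{4-15-1}) and tallying powers of $2$ (with $\max\{m,n\}$ absorbing the remaining variable by the sign of the relevant coefficient) gives exactly $\varepsilon_{j}$. Your closing remarks on the positivity of $\varepsilon_{j}$ and the relation $\varepsilon_{2}=2\varepsilon_{1}+1$ are accurate but go slightly beyond what the proposition itself requires; they explain the role of hypothesis (\ref{4-24-1}) in the subsequent corollaries rather than being needed for the stated bound.
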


\begin{proof}
For each fixed $k=1,\cdots ,2^{n}$, consider $f_{m,n,k}^{j}(w)=\left\vert
w_{t_{n}^{k-1},t_{n}^{k}}^{(m+1),j}-w_{t_{n}^{k-1},t_{n}^{k}}^{(m),j}\right%
\vert ^{2\tilde{N}}$ which is continuous on $\boldsymbol{W}$. Thus,
according to the capacity maximal inequality 
\begin{eqnarray*}
&&\text{\textit{Cap}}_{q,N}\left\{ \left\vert
w_{t_{n}^{k-1},t_{n}^{k}}^{(m+1),j}-w_{t_{n}^{k-1},t_{n}^{k}}^{(m),j}\right%
\vert ^{\frac{p}{j}}>C_{\theta }\left( \frac{1}{2^{m}}\right) ^{\beta
}\left( \frac{1}{2^{n}}\right) ^{\theta +1}\right\} \\
&=&\text{\textit{Cap}}_{q,N}\left\{ f_{m,n,k}^{j}>\left[ C_{\theta }^{\frac{j%
}{p}}\left( \frac{1}{2^{m}}\right) ^{\beta \frac{j}{p}}\left( \frac{1}{2^{n}}%
\right) ^{\frac{j}{p}(\theta +1)}\right] ^{2\tilde{N}}\right\} \\
&\leq &C\left[ C_{\theta }^{\frac{j}{p}}\left( \frac{1}{2^{m}}\right)
^{\beta \frac{j}{p}}\left( \frac{1}{2^{n}}\right) ^{\frac{j}{p}(\theta +1)}%
\right] ^{-2\tilde{N}}||f_{m,n,k}^{j}||_{q,N}\text{.}
\end{eqnarray*}%
On the other hand 
\begin{eqnarray*}
&&\text{\textit{Cap}}_{q,N}\left\{ \sum_{k=1}^{2^{n}}\left\vert
w_{t_{n}^{k-1},t_{n}^{k}}^{(m+1),j}-w_{t_{n}^{k-1},t_{n}^{k}}^{(m),j}\right%
\vert ^{\frac{p}{j}}>C_{\theta }\left( \frac{1}{2^{m}}\right) ^{\beta
}\left( \frac{1}{2^{n}}\right) ^{\theta }\right\} \\
&\leq &\sum_{k=1}^{2^{n}}\text{\textit{Cap}}_{q,N}\left\{ \left\vert
w_{t_{n}^{k-1},t_{n}^{k}}^{(m+1),j}-w_{t_{n}^{k-1},t_{n}^{k}}^{(m),j}\right%
\vert ^{\frac{p}{j}}>C_{\theta }\left( \frac{1}{2^{m}}\right) ^{\beta
}\left( \frac{1}{2^{n}}\right) ^{\theta +1}\right\}
\end{eqnarray*}%
It follows from (\ref{4-15-1}) that 
\begin{eqnarray*}
&&\text{\textit{Cap}}_{q,N}\left\{ \sum_{k=1}^{2^{n}}\left\vert
w_{t_{n}^{k-1},t_{n}^{k}}^{(m+1),2}-w_{t_{n}^{k-1},t_{n}^{k}}^{(m),2}\right%
\vert ^{\frac{p}{2}}>C_{\theta }\left( \frac{1}{2^{m}}\right) ^{\beta
}\left( \frac{1}{2^{n}}\right) ^{\theta }\right\} \\
&\leq &\left\{ 
\begin{array}{cc}
C2^{n}\left[ \left( \frac{1}{2^{m}}\right) ^{\frac{2}{p}\beta }C_{\theta }^{%
\frac{2}{p}}\left( \frac{1}{2^{n}}\right) ^{\frac{2}{p}(\theta +1)}\right]
^{-2\tilde{N}}\left( \frac{1}{2^{n}}\right) ^{\tilde{N}}\left( \frac{1}{2^{m}%
}\right) ^{\tilde{N}} & \text{ \ \ for }n\leq m\text{,} \\ 
C2^{n}\left[ \left( \frac{1}{2^{m}}\right) ^{\frac{2}{p}\beta }C_{\theta }^{%
\frac{2}{p}}\left( \frac{1}{2^{n}}\right) ^{\frac{2}{p}(\theta +1)}\right]
^{-2\tilde{N}}\left( \frac{2^{m}}{2^{2n}}\right) ^{2\tilde{N}} & \text{for }%
n>m%
\end{array}%
\right. \\
&\leq &C\left( \frac{1}{2^{m\vee n}}\right) ^{\frac{p-2}{p}2\tilde{N}-\frac{4%
}{p}\tilde{N}\left( \beta +\theta \right) -1}\text{ \ \ for all }n\text{ and 
}m\text{.}
\end{eqnarray*}%
Similarly, for $j=1$ and $n>m$ we have%
\begin{eqnarray*}
&&\text{\textit{Cap}}_{q,N}\left\{ \sum_{k=1}^{2^{n}}\left\vert
w_{t_{n}^{k-1},t_{n}^{k}}^{(m+1),1}-w_{t_{n}^{k-1},t_{n}^{k}}^{(m),1}\right%
\vert ^{p}>C_{\theta }\left( \frac{1}{2^{m}}\right) ^{\beta }\left( \frac{1}{%
2^{n}}\right) ^{\theta }\right\} \\
&\leq &C2^{n}\left[ \left( \frac{1}{2^{m}}\right) ^{\frac{1}{p}\beta
}C_{\theta }^{\frac{1}{p}}\left( \frac{1}{2^{n}}\right) ^{\frac{1}{p}(\theta
+1)}\right] ^{-2\tilde{N}}||f_{1}||_{q,N} \\
&\leq &C2^{n}\left[ \left( \frac{1}{2^{m}}\right) ^{\frac{1}{p}\beta
}C_{\theta }^{\frac{1}{p}}\left( \frac{1}{2^{n}}\right) ^{\frac{1}{p}(\theta
+1)}\right] ^{-2\tilde{N}}\left( \frac{2^{m}}{2^{2n}}\right) ^{\tilde{N}} \\
&\leq &C\left( \frac{1}{2^{n}}\right) ^{\frac{p-2}{p}\tilde{N}-\frac{2}{p}%
\tilde{N}\left( \theta +\beta \right) -1}
\end{eqnarray*}%
which completes the proof.
\end{proof}

\begin{proposition}
Let $p\in (2,3),q\geq 1$, $N\in N$, and $\beta \in (0,\frac{p-2}{2})$. Then,
for any $\varepsilon >0$ there is a constant $C$ depending only on $p,d,q,N$
and $\beta $ such that 
\begin{equation}
\text{\textit{Cap}}_{q,N}\left\{ \rho _{j}(\boldsymbol{w}^{(m+1)},%
\boldsymbol{w}^{(m)})>\left( \frac{1}{2^{m}}\right) ^{\frac{j}{p}\beta
}\right\} \leq C\left( \frac{1}{2^{m}}\right) ^{\varepsilon }
\label{4-22-5a}
\end{equation}%
for all $m\in \mathbb{N}$ and $j=1,2$.
\end{proposition}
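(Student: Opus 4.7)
The plan is to combine Proposition \ref{j2.1} with a union bound over the dyadic scales $n$, choosing the free parameters $\tilde{N}$ and $\theta$ in Proposition \ref{j2.1} large/small enough that the exponent $\varepsilon_j$ exceeds any prescribed $\varepsilon>0$. First I would rewrite the event as $\{\rho_j>(1/2^m)^{j\beta/p}\}=\{\rho_j^{p/j}>(1/2^m)^\beta\}$ and invoke the elementary inclusion (\ref{4-22-a1}) with $\lambda=(1/2^m)^\beta$ to obtain
\[
\{\rho_j^{p/j}>(1/2^m)^\beta\}\;\subset\;\bigcup_{n=1}^\infty\left\{\sum_{k=1}^{2^n}\left|w_{t_n^{k-1},t_n^k}^{(m+1),j}-w_{t_n^{k-1},t_n^k}^{(m),j}\right|^{p/j}>C_\theta (1/2^m)^\beta(1/2^n)^\theta\right\}.
\]
Sub-additivity of \textit{Cap}$_{q,N}$ then reduces the task to bounding each term of this union, which is exactly what Proposition \ref{j2.1} provides.

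The essential parameter choice works as follows. Given $\beta<(p-2)/2$, I would first fix $\theta>0$ with $\theta<(p-2)/2-\beta$, so that $\beta+\theta<(p-2)/2$. Then I would choose $\tilde{N}\in\mathbb{N}$ large enough to satisfy simultaneously (i) $\tilde{N}\geq N$, (ii) the two conditions (\ref{4-24-1}) of Proposition \ref{j2.1} (which, since $\beta+\theta<(p-2)/2$ strictly, are satisfied for all sufficiently large $\tilde N$), and (iii) $\varepsilon_j>\varepsilon$. The latter is possible because
\[
\varepsilon_j=\left[\frac{p-2}{2}-(\theta+\beta)\right]\frac{2j\tilde{N}}{p}-1\longrightarrow\infty\quad\text{as }\tilde{N}\to\infty.
\]

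With these parameters fixed, Proposition \ref{j2.1} yields
\[
\text{\textit{Cap}}_{q,N}\{\rho_j^{p/j}>(1/2^m)^\beta\}\leq C\sum_{n=1}^\infty\left(\frac{1}{2^{\max\{m,n\}}}\right)^{\varepsilon_j}=C\sum_{n\leq m}(1/2^m)^{\varepsilon_j}+C\sum_{n>m}(1/2^n)^{\varepsilon_j}.
\]
The first sum is bounded by $Cm\,(1/2^m)^{\varepsilon_j}$, and the second is a geometric tail comparable to $(1/2^m)^{\varepsilon_j}$. Since $\varepsilon_j>\varepsilon$, the polynomial factor $m$ is absorbed into the exponential gain (using $m\leq C_\delta 2^{m\delta}$ for any $\delta>0$), producing the desired bound $C(1/2^m)^\varepsilon$. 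I do not expect a serious obstacle at this stage: all the delicate work is packed into Proposition \ref{j2.1} via the Sobolev norm estimates of the polynomial functionals $f_{m,n,k}^j$. The only real subtlety here is verifying that the admissible parameter region for $(\theta,\tilde N)$ is non-empty and permits $\varepsilon_j$ to be driven to $+\infty$, which is precisely what the strict inequality $\beta<(p-2)/2$ guarantees.
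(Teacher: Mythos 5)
Your proposal is correct and follows essentially the same route as the paper: invoke the inclusion (\ref{4-22-a1}), apply subadditivity of \textit{Cap}$_{q,N}$, invoke Proposition \ref{j2.1} term by term, and then tune $\theta,\tilde N$ so that the exponent $\varepsilon_j$ dominates $\varepsilon$, using that $\beta<\frac{p-2}{2}$ leaves room to pick $\theta$ small and $\tilde N$ large. The only cosmetic difference is that the paper arranges $\varepsilon_j\geq 2\varepsilon$ outright so the factor $m$ from the sum over $n\leq m$ is absorbed via $m(1/2^m)^{2\varepsilon}\leq C(1/2^m)^{\varepsilon}$, whereas you take $\varepsilon_j>\varepsilon$ and absorb $m$ with $m\leq C_\delta 2^{m\delta}$ — these are equivalent bookkeeping choices.
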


\begin{proof}
Choose $\theta >0$ and $\tilde{N}\in \mathbb{N}$ such that $\tilde{N}>N$, 
\begin{equation*}
p-2-\frac{p}{\tilde{N}}>0\text{, }\beta +\theta <\frac{p-2}{2}-\frac{p}{2%
\tilde{N}}
\end{equation*}%
and%
\begin{equation*}
\left[ \frac{p-2}{2}-\left( \theta +\beta \right) \right] \frac{2\tilde{N}}{p%
}-1\geq 2\varepsilon \text{ .}
\end{equation*}%
Then according to Proposition \ref{j2.1}, there is a constant $C$ depending
only on $N,\tilde{N},\theta ,\beta ,d$ and $q$ such that 
\begin{eqnarray*}
&&\text{\textit{Cap}}_{q,\alpha }\left\{ \rho _{j}(\boldsymbol{w}^{(m+1)},%
\boldsymbol{w}^{(m)})^{\frac{p}{j}}>\left( \frac{1}{2^{m}}\right) ^{\beta
}\right\} \\
&\leq &\sum_{n=1}^{\infty }\text{\textit{Cap}}_{q,N}\left\{
\sum_{k=1}^{2^{n}}\left\vert
w_{t_{n}^{k-1},t_{n}^{k}}^{(m+1),j}-w_{t_{n}^{k-1},t_{n}^{k}}^{(m),j}\right%
\vert ^{\frac{p}{j}}>C_{\theta }\left( \frac{1}{2^{m}}\right) ^{\beta
}\left( \frac{1}{2^{n}}\right) ^{\theta }\right\} \\
&\leq &C\sum_{n=1}^{m}\left( \frac{1}{2^{m}}\right) ^{2\varepsilon
}+C\sum_{n\geq m}\left( \frac{1}{2^{n}}\right) ^{2\varepsilon } \\
&\leq &C\left( \frac{1}{2^{m}}\right) ^{\varepsilon }\text{.}
\end{eqnarray*}
\end{proof}

This proposition shows the capacity of $\{\rho _{j}(\boldsymbol{w}^{(m+1)},%
\boldsymbol{w}^{(m)})>2^{-j\beta m/p}\}$ for small $\beta >0$ decays
sub-exponentially in $2^{-m}$ (in contrast with the decay rate in (\ref%
{m23-7a}) which is indeed not a sharp estimate). This is the right order for 
$j=2$. In the case $j=1$ and for capacity Cap$_{2,1}$, this result was
established by M. Fukushima \cite{MR723601}.

\begin{lemma}
Let $p\in (2,3),q\geq 1$, $N\in \mathbb{N}$, $\delta >0$ and $\tilde{N}\in 
\mathbb{N}$ such that 
\begin{equation*}
N\leq \tilde{N}\text{, \ }\tilde{N}\left( 1-\frac{2}{p}\right) -1>0\text{,}
\end{equation*}%
there is a constant $C$ depending only on $N,p,d,\delta ,q\geq 1$ such that 
\begin{equation}
\text{\textit{Cap}}_{q,N}\left\{ \rho _{1}(\boldsymbol{w}^{(m)})>\left(
2^{m}\right) ^{\frac{\delta }{p}}\right\} \leq C\left( \frac{1}{2^{m}}%
\right) ^{\frac{2\delta }{p}\tilde{N}}\text{ \ \ \ \ \ \ }\forall m\in 
\mathbb{N}\text{. }  \label{4-21-8}
\end{equation}
\end{lemma}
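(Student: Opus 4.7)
The plan is to mirror the strategy behind Proposition \ref{j2.1}, but applied to $\rho_{1}(\boldsymbol{w}^{(m)})$ rather than its differenced counterpart, leveraging the Sobolev norm bound (\ref{4-17-13}) for the smooth functional $g_{m,n,k}^{1}(w)=|w_{t_{n}^{k-1},t_{n}^{k}}^{(m),1}|^{2\tilde{N}}$ already at hand. First I would rewrite the target event as $\{\rho_{1}(\boldsymbol{w}^{(m)})^{p}>(2^{m})^{\delta}\}$ and, by the mass-distribution trick underlying (\ref{4-22-a1}), decompose it for an auxiliary parameter $\theta>0$ as
\begin{equation*}
\left\{\rho_{1}(\boldsymbol{w}^{(m)})^{p}>(2^{m})^{\delta}\right\}\subset\bigcup_{n=1}^{\infty}\bigcup_{k=1}^{2^{n}}\left\{\left|w_{t_{n}^{k-1},t_{n}^{k}}^{(m),1}\right|^{p}>C_{\theta}(2^{m})^{\delta}\left(\tfrac{1}{2^{n}}\right)^{\theta+1}\right\},
\end{equation*}
where $C_{\theta}=1/\sum_{n}n^{\gamma}(1/2^{n})^{\theta}$ absorbs the $n^{\gamma}$ weights present in $\rho_{1}^{p}$.

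Next, since $g_{m,n,k}^{1}$ is a polynomial of Gaussian increments, hence smooth and continuous on $\boldsymbol{W}$, I would apply the capacity maximal inequality to obtain
\begin{equation*}
\text{\textit{Cap}}_{q,N}\left\{\left|w_{t_{n}^{k-1},t_{n}^{k}}^{(m),1}\right|^{p}>C_{\theta}(2^{m})^{\delta}\left(\tfrac{1}{2^{n}}\right)^{\theta+1}\right\}\leq C\left[(2^{m})^{\delta/p}\left(\tfrac{1}{2^{n}}\right)^{(\theta+1)/p}\right]^{-2\tilde{N}}\|g_{m,n,k}^{1}\|_{q,N},
\end{equation*}
substitute the Sobolev norm estimate (\ref{4-17-13}), and pick up a factor $2^{n}$ from the sum over $k$. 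This reduces matters to bounding two geometric series in $n$: the summand behaves like $(2^{m})^{-2\delta\tilde{N}/p}(2^{n})^{1+2(\theta+1)\tilde{N}/p-\tilde{N}}$ for $n\leq m$ and like $(2^{m})^{\tilde{N}-2\delta\tilde{N}/p}(2^{n})^{1+2(\theta+1)\tilde{N}/p-2\tilde{N}}$ for $n>m$.

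The hypothesis $\tilde{N}(1-2/p)>1$ is precisely what guarantees that both exponents of $2^{n}$ are strictly negative at $\theta=0$, so a sufficiently small $\theta>0$ keeps them negative. The $n\leq m$ sum is then dominated by its $n=1$ term and yields a constant multiple of $(2^{m})^{-2\delta\tilde{N}/p}$. The $n>m$ sum is dominated by its first term at $n=m+1$, producing an extra factor $(2^{m})^{1-\tilde{N}(1-2/p)+2\theta\tilde{N}/p}$ which, being a nonpositive power of $2^{m}$ for small $\theta$, is harmless and leaves the same target decay $(2^{m})^{-2\delta\tilde{N}/p}$. The main obstacle is the exponent bookkeeping, namely verifying that a single choice of $\theta>0$ simultaneously makes both series geometric-convergent and delivers the target rate in both regimes; once the gap $\tilde{N}(1-2/p)-1>0$ is used, the remaining computation is elementary algebra.
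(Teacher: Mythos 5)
Your proposal is correct and follows essentially the same route as the paper's proof: decompose the event via the $C_\theta$ trick, apply the capacity maximal inequality to the smooth functional $g_{m,n,k}^{1}$ using the Sobolev bound (\ref{4-17-13}), split the sum at $n=m$, and close both geometric series using the hypothesis $\tilde{N}(1-2/p)-1>0$ after choosing $\theta>0$ small enough that $\tilde{N}\bigl(1-2(\theta+1)/p\bigr)-1>0$. The only cosmetic difference is that the paper groups the $n>m$ summand as $(2^{m}/2^{n})^{\tilde{N}}(1/2^{n})^{\tilde{N}(1-2(\theta+1)/p)-1}$ while you track the exponents of $2^{m}$ and $2^{n}$ directly; the bookkeeping is equivalent and both yield the target decay $(1/2^{m})^{2\delta\tilde{N}/p}$.
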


\begin{proof}
Choose $\theta >0$ such that%
\begin{equation*}
\tilde{N}\left( 1-2\frac{\theta +1}{p}\right) -1>0\text{.}
\end{equation*}%
Then 
\begin{eqnarray*}
\text{\textit{Cap}}_{q,N}\left\{ \rho _{1}(\boldsymbol{w}^{(m)})^{p}>\left(
2^{m}\right) ^{\delta }\right\} &\leq &\sum_{n=1}^{\infty }\text{\textit{Cap}%
}_{q,N}\left\{ \sum_{k=1}^{2^{n}}\left\vert w_{\frac{k-1}{2^{n}},\frac{k}{%
2^{n}}}^{\left( m\right) ,1}\right\vert ^{p}>C_{\theta }\left( 2^{m}\right)
^{\delta }\left( \frac{1}{2^{n}}\right) ^{\theta }\right\} \\
&\leq &\sum_{n=1}^{\infty }\sum_{k=1}^{2^{n}}\text{\textit{Cap}}%
_{q,N}\left\{ \left\vert w_{\frac{k-1}{2^{n}},\frac{k}{2^{n}}}^{\left(
m\right) ,1}\right\vert ^{p}>C_{\theta }\left( 2^{m}\right) ^{\delta }\left( 
\frac{1}{2^{n}}\right) ^{\theta +1}\right\} \\
&\leq &\sum_{n=1}^{\infty }\sum_{k=1}^{2^{n}}\text{\textit{Cap}}%
_{q,N}\left\{ g_{m,n,k}^{1}>\left[ C_{\theta }^{\frac{1}{p}}\left(
2^{m}\right) ^{\frac{\delta }{p}}\left( \frac{1}{2^{n}}\right) ^{\frac{%
\theta +1}{p}}\right] ^{2\tilde{N}}\right\}
\end{eqnarray*}%
where%
\begin{equation*}
g_{m,n,k}^{1}=\left\vert w_{\frac{k-1}{2^{n}},\frac{k}{2^{n}}}^{\left(
m\right) ,1}\right\vert ^{2\tilde{N}}\text{ .}
\end{equation*}%
Thus, by using the capacity maximal inequality and (\ref{4-17-13}):%
\begin{equation*}
||g_{m,n,k}^{1}||_{q,N}\leq \left\{ 
\begin{array}{cc}
Cq^{\tilde{N}}\left( \frac{2^{m}}{2^{2n}}\right) ^{\tilde{N}} & \text{ \ \
for }n>m\text{,} \\ 
Cq^{\tilde{N}}\left( \frac{1}{2^{n}}\right) ^{\tilde{N}}\text{ \ \ \ } & 
\text{\ \ for }n\leq m\text{,}%
\end{array}%
\right. \text{ \ \ \ \ }
\end{equation*}%
we obtain 
\begin{eqnarray*}
\text{\textit{Cap}}_{q,N}\left\{ \rho _{1}(\boldsymbol{w}^{(m)})^{p}>\left(
2^{m}\right) ^{\delta }\right\} &\leq &C\sum_{n=1}^{m}2^{n}\left[ C_{\theta
}^{\frac{1}{p}}\left( 2^{m}\right) ^{\frac{\delta }{p}}\left( \frac{1}{2^{n}}%
\right) ^{\frac{\theta +1}{p}}\right] ^{-2\tilde{N}}\left( \frac{1}{2^{n}}%
\right) ^{\tilde{N}} \\
&&+C\sum_{n>m}2^{n}\left[ C_{\theta }^{\frac{1}{p}}\left( 2^{m}\right) ^{%
\frac{\delta }{p}}\left( \frac{1}{2^{n}}\right) ^{\frac{\theta +1}{p}}\right]
^{-2\tilde{N}}\left( \frac{2^{m}}{2^{2n}}\right) ^{\tilde{N}} \\
&\leq &C\left( \frac{1}{2^{m}}\right) ^{\tilde{N}\frac{2\delta }{p}%
}\sum_{n=1}^{m}\left( \frac{1}{2^{n}}\right) ^{\tilde{N}\left( 1-2\frac{%
\theta +1}{p}\right) -1} \\
&&+C\left( \frac{1}{2^{m}}\right) ^{\tilde{N}\frac{2\delta }{p}%
}\sum_{n>m}\left( \frac{2^{m}}{2^{n}}\right) ^{\tilde{N}}\left( \frac{1}{%
2^{n}}\right) ^{\tilde{N}\left( 1-2\frac{\theta +1}{p}\right) -1} \\
&\leq &C\left( \frac{1}{2^{m}}\right) ^{\tilde{N}\frac{2\delta }{p}}\text{ .}
\end{eqnarray*}
\end{proof}

\begin{proposition}
Let $p\in (2,3),q\geq 1$, $N\in \mathbb{N}$, and $\beta \in (0,\frac{p-2}{2}%
) $. Then, for any $\varepsilon >0$ there is a constant $C$ depending only
on $p,d,q,N$ and $\beta $ such that 
\begin{equation}
\text{\textit{Cap}}_{q,N}\left\{ \rho _{1}(\boldsymbol{w}^{(m)})\rho _{1}(%
\boldsymbol{w}^{(m+1)},\boldsymbol{w}^{(m)})>\left( \frac{1}{2^{m}}\right) ^{%
\frac{\beta }{p}}\right\} \leq C\left( \frac{1}{2^{m}}\right) ^{\varepsilon }
\label{4-22-6a}
\end{equation}%
for all $m\in \mathbb{N}$.
\end{proposition}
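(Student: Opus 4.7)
The plan is to exploit the product structure by splitting into two events according to the size of each factor, then to combine the decay bound for $\rho_1(\boldsymbol{w}^{(m)})$ (the preceding lemma) with the decay bound for $\rho_1(\boldsymbol{w}^{(m+1)},\boldsymbol{w}^{(m)})$ (the preceding proposition) via sub-additivity of the capacity. Since $\beta<\frac{p-2}{2}$ by hypothesis, I can fix $\delta>0$ small enough that $\beta+\delta<\frac{p-2}{2}$.

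With this $\delta$, set $\lambda_1=(2^{-m})^{-\delta/p}$ and $\lambda_2=(2^{-m})^{(\beta+\delta)/p}$, so that $\lambda_1\lambda_2=(2^{-m})^{\beta/p}$. The key observation is the elementary inclusion
\begin{equation*}
\left\{\rho_1(\boldsymbol{w}^{(m)})\rho_1(\boldsymbol{w}^{(m+1)},\boldsymbol{w}^{(m)})>\lambda_1\lambda_2\right\}\subset\left\{\rho_1(\boldsymbol{w}^{(m)})>\lambda_1\right\}\cup\left\{\rho_1(\boldsymbol{w}^{(m+1)},\boldsymbol{w}^{(m)})>\lambda_2\right\},
\end{equation*}
so by sub-additivity of \textit{Cap}$_{q,N}$ the left-hand side of (\ref{4-22-6a}) is bounded by the sum of the two capacities on the right.

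For the second piece, I apply the preceding proposition (inequality (\ref{4-22-5a})) with the admissible exponent $\beta+\delta\in(0,\frac{p-2}{2})$ in place of $\beta$, to obtain \textit{Cap}$_{q,N}\{\rho_1(\boldsymbol{w}^{(m+1)},\boldsymbol{w}^{(m)})>\lambda_2\}\leq C(2^{-m})^{\varepsilon}$. For the first piece, I apply the preceding lemma (inequality (\ref{4-21-8})) with the parameter $\delta$ just fixed; this gives \textit{Cap}$_{q,N}\{\rho_1(\boldsymbol{w}^{(m)})>\lambda_1\}\leq C(2^{-m})^{2\delta\tilde{N}/p}$, for any $\tilde{N}\in\mathbb{N}$ with $\tilde{N}\geq N$ and $\tilde{N}(1-\frac{2}{p})-1>0$. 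Choosing $\tilde{N}$ large enough so that $2\delta\tilde{N}/p\geq\varepsilon$, the two bounds combine to yield (\ref{4-22-6a}).

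The only subtlety lies in balancing the two parameters: $\delta$ must stay small so that $\beta+\delta$ remains below the critical threshold $\frac{p-2}{2}$ required by the preceding proposition, and once $\delta$ is fixed, $\tilde{N}$ must be taken correspondingly large to recover the prescribed decay rate $\varepsilon$ for the first capacity. Since both $\delta$ and $\tilde{N}$ depend only on $p,\beta,\varepsilon$ (and $N$ for the lower bound $\tilde{N}\geq N$), the resulting constant $C$ has the claimed dependence.
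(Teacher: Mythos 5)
Your proof is correct and follows essentially the same route as the paper: split the product event via the inclusion $\{AB>\lambda_1\lambda_2\}\subset\{A>\lambda_1\}\cup\{B>\lambda_2\}$, bound the two pieces by the preceding lemma and proposition respectively, and combine via sub-additivity. The only cosmetic difference is that you invoke the preceding proposition (\ref{4-22-5a}) as a black box, whereas the paper lists the internal parameter constraints explicitly; this makes your version slightly more modular but the content is the same.
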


\begin{proof}
Choose $\tilde{N}\in \mathbb{N}$, $\theta >0,\delta >0$ such that%
\begin{equation*}
\tilde{N}\left( 1-2\frac{\theta +1}{p}\right) -1>\varepsilon \text{,}
\end{equation*}%
\begin{equation*}
p-2-\frac{p}{\tilde{N}}>0\text{, }\beta +\theta +\delta <\frac{p-2}{2}-\frac{%
p}{2\tilde{N}}
\end{equation*}%
and%
\begin{equation*}
\left[ \frac{p-2}{2}-\left( \theta +\beta +\delta \right) \right] \frac{2%
\tilde{N}}{p}-1\geq 2\varepsilon \text{ .}
\end{equation*}%
Then, since 
\begin{eqnarray*}
&&\left\{ \rho _{1}(\boldsymbol{w}^{(m)})^{p}\rho _{1}(\boldsymbol{w}%
^{(m+1)},\boldsymbol{w}^{(m)})^{p}>\left( \frac{1}{2^{m}}\right) ^{\beta
}\right\} \\
&\subset &\left\{ \rho _{1}(\boldsymbol{w}^{(m)})^{p}>\left( 2^{m}\right)
^{\delta }\right\} \cup \left\{ \rho _{1}(\boldsymbol{w}^{(m+1)},\boldsymbol{%
w}^{(m)})^{p}>\left( \frac{1}{2^{m}}\right) ^{\beta +\delta }\right\}
\end{eqnarray*}%
so that\newline
\begin{eqnarray*}
&&\text{\textit{Cap}}_{q,N}\left\{ \rho _{1}(\boldsymbol{w}^{(m)})^{p}\rho
_{1}(\boldsymbol{w}^{(m+1)},\boldsymbol{w}^{(m)})^{p}>\left( \frac{1}{2^{m}}%
\right) ^{\beta }\right\} \\
&\leq &\text{\textit{Cap}}_{q,N}\left\{ \rho _{1}(\boldsymbol{w}%
^{(m)})^{p}>\left( 2^{m}\right) ^{\delta }\right\} +\text{\textit{Cap}}%
_{q,N}\left\{ \rho _{1}(\boldsymbol{w}^{(m+1)},\boldsymbol{w}%
^{(m)})^{p}>\left( \frac{1}{2^{m}}\right) ^{\beta +\delta }\right\} \\
&\leq &C\left( \frac{1}{2^{m}}\right) ^{\varepsilon }\text{.}
\end{eqnarray*}
\end{proof}

Putting ((\ref{m29-10}), (\ref{4-22-5a}) and (\ref{4-22-6a}) together we may
conclude the following

\begin{theorem}
Let $p\in (2,3)$. Then for any $\varepsilon >0$, $q\geq 1$ and $N\in \mathbb{%
N}$, there are $\beta >0$, constants $C_{1}>0$ and $C_{2}>0$ depending only
on $p,q,N$, $d$ and $\varepsilon $ such that 
\begin{equation}
\text{\textit{Cap}}_{q,N}\left\{ d_{p}(\boldsymbol{w}^{(m+1)},\boldsymbol{w}%
^{(m)})>C_{1}\left( \frac{1}{2^{m}}\right) ^{\beta }\right\} \leq
C_{2}\left( \frac{1}{2^{m}}\right) ^{\varepsilon }\text{ \ \ }\forall m\in 
\mathbb{N}\text{.}  \label{4-21-9}
\end{equation}
\end{theorem}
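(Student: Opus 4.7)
The plan is to combine the domination inequality (\ref{m29-10}) from Lemma \ref{le1} with the two capacity estimates (\ref{4-22-5a}) and (\ref{4-22-6a}) already established. By (\ref{m29-10}), if $d_{p}(\boldsymbol{w}^{(m+1)},\boldsymbol{w}^{(m)}) > C_{1}\,2^{-m\beta}$ with $C_{1} = C$ (the constant in (\ref{m29-10})), then at least one of the three quantities inside the maximum on the right-hand side of (\ref{m29-10}) exceeds $2^{-m\beta}$. Consequently the event in question is contained in the union
\begin{equation*}
E_{1}\cup E_{2}\cup E_{3},
\end{equation*}
where $E_{j}=\{\rho_{j}(\boldsymbol{w}^{(m+1)},\boldsymbol{w}^{(m)})>2^{-m\beta}\}$ for $j=1,2$, and $E_{3}=\{\rho_{1}(\boldsymbol{w}^{(m+1)},\boldsymbol{w}^{(m)})(\rho_{1}(\boldsymbol{w}^{(m+1)})+\rho_{1}(\boldsymbol{w}^{(m)}))>2^{-m\beta}\}$.

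To handle $E_{3}$, I would exploit that $\rho_{1}$ satisfies a triangle-like inequality, so $\rho_{1}(\boldsymbol{w}^{(m+1)})\leq \rho_{1}(\boldsymbol{w}^{(m)})+\rho_{1}(\boldsymbol{w}^{(m+1)},\boldsymbol{w}^{(m)})$. Hence $E_{3}$ is contained in
\begin{equation*}
\{\rho_{1}(\boldsymbol{w}^{(m)})\rho_{1}(\boldsymbol{w}^{(m+1)},\boldsymbol{w}^{(m)})>c\,2^{-m\beta}\}\cup\{\rho_{1}(\boldsymbol{w}^{(m+1)},\boldsymbol{w}^{(m)})^{2}>c\,2^{-m\beta}\}
\end{equation*}
for a suitable absolute $c>0$. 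The second event here is of the form $\{\rho_{1}(\boldsymbol{w}^{(m+1)},\boldsymbol{w}^{(m)})>c'\,2^{-m\beta/2}\}$, which is again controlled by (\ref{4-22-5a}).

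Now I would choose $\beta\in(0,(p-2)/(2p))$, which is the binding constraint so that all exponents produced are admissible in the propositions: for $E_{1}$ apply (\ref{4-22-5a}) with $j=1$ (taking the parameter $\beta$ of that proposition equal to $p\beta$); for $E_{2}$ apply (\ref{4-22-5a}) with $j=2$ (the admissibility is easier, since the constraint there is $\beta_{0}<(p-2)/p$); for the first part of the decomposed $E_{3}$ apply (\ref{4-22-6a}) with parameter $p\beta$; and for the second part apply (\ref{4-22-5a}) with $j=1$ and parameter $p\beta/2$. Each of the resulting propositions delivers a capacity bound of the form $C\,2^{-m\varepsilon}$, provided $\beta$ is taken strictly smaller than $(p-2)/(2p)$ and $\varepsilon$ is chosen small enough; more precisely, given $\varepsilon$ in the target theorem, one first fixes $\beta$ small and then invokes the propositions with that $\varepsilon$.

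The main non-routine point is simply the bookkeeping in the previous paragraph: verifying that a single choice of $\beta>0$ simultaneously satisfies the admissibility conditions of (\ref{4-22-5a}) for $j=1,2$ and of (\ref{4-22-6a}), and that the $\varepsilon$ produced by the three propositions can be taken uniform. Once this is checked, subadditivity of $\text{Cap}_{q,N}$ yields the theorem with $C_{2}$ equal to the sum of the four constants obtained.
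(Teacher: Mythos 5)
Your proof is correct and follows the same route the paper indicates, namely combining the domination inequality (\ref{m29-10}) with the capacity estimates (\ref{4-22-5a}) and (\ref{4-22-6a}) and applying subadditivity of the capacity; the triangle-inequality reduction of $\rho_1(\boldsymbol{w}^{(m+1)})$ to $\rho_1(\boldsymbol{w}^{(m)})$ is a detail the paper leaves implicit and is a perfectly valid way to deal with the asymmetry in (\ref{4-22-6a}). One small caveat: the numerical constants $c,c'$ produced in your decomposition of $E_3$ are not simply matched by the bare thresholds $(1/2^m)^{\cdot}$ in the propositions for every $m$, so they must be absorbed by taking $\beta$ slightly smaller than the admissible maximum (which enforces the inclusion for $m$ large) and enlarging $C_2$ to cover the finitely many small $m$; with that minor bookkeeping made explicit, the argument is complete.
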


We are in a position to prove the main theorem \ref{th5}. By the capacity
version of the Borel-Cantelli lemma, (\ref{4-21-9}) implies that 
\begin{equation*}
A=\left\{ w\in \boldsymbol{W}:\sum_{m=1}^{\infty }d_{p}(\boldsymbol{w}%
^{(m+1)},\boldsymbol{w}^{(m)})=\infty \right\}
\end{equation*}%
is slim, that is, \textit{Cap}$_{q,N}\left\{ A\right\} =0$ for any $q\geq 1$
and $N\in \mathbb{N}$, so that 
\begin{equation*}
\left\{ w\in \boldsymbol{W}:(\boldsymbol{w}^{(m)})\text{ is not Cauchy in }%
G_{p}\Omega (\mathbb{R}^{d})\right\}
\end{equation*}%
is slim, and therefore $\boldsymbol{w}^{(m)}\rightarrow \boldsymbol{w}$ in $%
G_{p}\Omega (\mathbb{R}^{d})$ quasi-surely.




\end{document}